\documentclass[review,onefignum,onetabnum]{siamart171218}

\usepackage{a4wide}
\usepackage{amsfonts}
\usepackage{amscd}
\usepackage{amssymb}
\usepackage{amsmath}
\usepackage{latexsym}
\usepackage{dsfont}
\usepackage{placeins}
\usepackage{mathrsfs}

\usepackage{nicefrac}
\usepackage{MnSymbol}
\usepackage[normalem]{ulem}
\usepackage{pgfplots}
\pgfplotsset{compat=newest}
\usepackage{booktabs}

\usepackage{mathtools}
\usepackage{esvect} 
\usepackage{blindtext}

\usepackage{graphicx}
\usepackage{subfigure}
\usepackage{epstopdf}
\pdfminorversion=7


\newenvironment{customthm}[1]
  {\innercustomthm}
  {\endinnercustomthm}

\theoremstyle{plain}
\newtheorem{rem}[theorem]{Remark}
\newtheorem{assumption}[theorem]{Assumption}

%
{%
\setcounter{enumi}{0}

\begin{enumerate}}%
{\end{enumerate} }
%
{%
\setcounter{enumi}{0}

\begin{enumerate}}%
{\end{enumerate} }

\newcommand{\hione}{{(1)}}
\newcommand{\hitwo}{{(2)}}
\newcommand{\apprlevy}{{(\varepsilon_l)}}
\newcommand{\apprgrf}{{(\varepsilon_W)}}




\newcommand{\cD}{\mathcal{D}}




\makeatletter
\usepackage{color}

\makeatother

\newcommand{\be}{\begin {equation}}
\newcommand{\ee}{\end  {equation}}
\numberwithin{equation}{section} \allowdisplaybreaks[1]

\definecolor{darkgreen}{rgb}{0,.6,0}

\newcommand{\bee}{\begin {equation*}}
\newcommand{\eee}{\end {equation*}}

\nolinenumbers

\title{Subordinated Gaussian Random Fields in elliptic Partial differential equations}

\author{Andrea Barth \thanks{IANS\textbackslash SimTech, University of Stuttgart 
(\email{andrea.barth@mathematik.uni-stuttgart.de})},
\and Robin Merkle \thanks{IANS\textbackslash SimTech, University of Stuttgart 
	(\email{robin.merkle@mathematik.uni-stuttgart.de}).}}

\headers{SUBORDINATED GAUSSIAN RANDOM FIELDS IN ELLIPTIC PDEs}{A. Barth and R. Merkle}

\begin{document}
		
	\maketitle
		
	\begin{abstract}
	To model subsurface flow in uncertain heterogeneous\textbackslash ~fractured media an elliptic equation with a discontinuous stochastic diffusion coefficient --- also called random field --- may be used. 
    In case of a one-dimensional parameter space, L\'evy processes allow for jumps and display great flexibility in the distributions used. However, in various situations (e.g. microstructure modeling), a one-dimensional parameter space is not sufficient. Classical extensions of L\'evy processes on two parameter dimensions suffer from the fact that they do not allow for spatial discontinuities (see for example \cite{ApproximationAndSimulation}). In this paper a new subordination approach is employed (see also \cite{SubordGRFTheory}) to generate L\'evy-type discontinuous random fields on a two-dimensional spatial parameter domain. Existence and uniqueness of a (pathwise) solution to a general elliptic partial differential equation is proved and an approximation theory for the diffusion coefficient and the corresponding solution provided. Further, numerical examples using a Monte Carlo approach on a Finite Element discretization validate our theoretical results.
	\end{abstract}
	
	\begin{keywords}
		stochastic partial differential equations, L\'evy fields, Finite Element Methods, Circulant Embedding, Subordination, discontinuous random fields
	\end{keywords}


\section{Introduction}\label{sec:Intro}
Over the last decade partial differential equations with stochastic operators\textbackslash~data\textbackslash~domain became a widely studied object. This branch of research is oftentimes called uncertainty quantification. Especially for problems where data is sparse or measurement errors are unavoidable, like subsurface flow problems, the theory provides an approach to quantify this uncertainty. There are two main approaches to discretize the uncertain problem: intrusive and non-intrusive methods. The former require the solution of a high dimensional partial differential equation, where the dimensionality depends on the smoothness of the random field or process (see for example ~\cite{GalerkiNFEApprOfStochEllPDEs}, \cite{FEsForEllProblemsWithStochCoeff}, \cite{OnTheConvOfTheStochGalerkinMethodForRandomEllPDEs} and the references therein). The latter consist of (essentially) sampling methods and require multiple solutions of a low dimensional problem (see, among others,~
\cite{MLMCMethodsForStochEllMultiscalePDEs},
\cite{MLMCFEMethodForEllPDEsWithStochCoeff},
\cite{AStudyOfElliptic}, \cite{AMultilevelMonteCarloAlgorithmForParabolicAdvectionDiffusionProblemsWithDiscontinuousCoefficients},  \cite{MultiLevelMonteCarloWeakGalerkinMethodForEllipticEquationsWithStochasticJumpCoefficients}, \cite{FurtherAnalysisOfMultilevelMonteCarloMethodsForEllipticPDEsWithRandomCoefficients}).
Up to date mainly Gaussian random fields were used to model the diffusivity in an elliptic equation (as a model for a subsurface flow problem). Gaussian random fields have the advantage that they may be used in both approaches and that they are stochastically very well understood objects. A great disadvantage is however, that the distributions underlying the field are Gaussian and therefore the field lack flexibility, in the sense that the field is continuous and cannot have pointwise marginal distributions with heavy-tails.

In this paper we propose a two-dimensional subordinated Gaussian random field as stochastic diffusion coefficient in an elliptic equation. The subordinated Gaussian random field is a type of a (discontinuous) L\'evy field. Different subordinators display unique patterns in the discontinuities and have varied marginal distributions (see \cite{SubordGRFTheory}). Naturally the spatial regularity of a subordinated Gaussian random field depends on the subordinator. We prove existence and uniqueness of a solution to the elliptic equation in a pathwise sense and provide different discretization schemes. 

We structured the rest of the paper as follows: In Section~\ref{sec:elliptic_problem} we introduce a general pathwise existence and uniqueness result for a stochastic elliptic equation under mild assumptions on the coefficient. These assumptions accommodate the subordinated Gaussian random fields we introduce in Section~\ref{sec:subordinated_GRF}. In Section~\ref{sec:approx_GRF} we approximate the specific diffusion coefficient which is used in this paper and show convergence of the elliptic equation with the approximated coefficient to the unapproximated solution in Section~\ref{sec:convergence}. Section~\ref{sec:approx_solution} provides spatial approximation methods and in Section~\ref{sec:numerics} numerical examples are presented.

\section{The stochastic elliptic problem}\label{sec:elliptic_problem}
	In this section we introduce the framework of the general stochastic elliptic boundary value problem which allows for discontinuous diffusion coefficients. For the general setting and pathwise existence theory we follow~\cite{AStudyOfElliptic}. In the following, let $(\Omega,\mathcal{F},\mathbb{P})$ be a complete probability space. To accommodate Banach-space-valued random variables we introduce the so-called Bochner spaces.

\begin{definition}
	Let $(B,\|\cdot\|_B)$ be a Banach space and $Z$ be a $B$ valued random variable, i.e. a strongly measurable function $Z:\Omega\rightarrow B$. The space $L^p(\Omega;B)$ contains all $B$-valued random variables with $\|Z\|_{L^p(\Omega;B)}<+\infty$, for $p\in [1,+\infty)$, where the norm is defined by 
	\begin{align*}
	\|Z\|_{L^p(\Omega;B)} = \begin{cases}\mathbb{E}(\|Z\|_B^p)^\frac{1}{p}&,\text{ if } 1\leq p<+\infty \\ \underset{\omega\in\Omega}{\operatorname{ess}\,\sup} \|Z\|_B &,\text{ if } p=+\infty \end{cases}.
	\end{align*}
\end{definition}
	\subsection{Problem formulation}
	  Let $\mathcal{D}\subset \mathbb{R}^d$ for $d\in\mathbb{N}$ be a bounded, connected Lipschitz domain. We consider the equation
	\begin{align}\label{EQ:EllProblem}
	-\nabla(a(\omega,\underline{x})\nabla u(\omega,\underline{x}))=f(\omega,\underline{x}) \text{ in }\Omega\times\mathcal{D},
	\end{align}
	where $a:\Omega\times\mathcal{D}\rightarrow\mathbb{R}_+$ is a stochastic (jump diffusion) coefficient and $f:\Omega\times\mathcal{D}\rightarrow\mathbb{R}$ is a (measurable) random source function. Further, we impose the following boundary conditions
	\begin{align}
	u(\omega,\underline{x})&=0 \text{ on } \Omega\times \Gamma_1,\label{EQ:EllProblemBCD}\\
	a(\omega,\underline{x}) \overrightarrow{n}\cdot\nabla u(\omega,\underline{x})&=g(\omega,\underline{x}) \text{ on } \Omega\times \Gamma_2,\label{EQ:EllProblemBCN}
	\end{align}
	where we assume to have a decomposition $\partial\mathcal{D}=\Gamma_1\overset{.}{\cup}\Gamma_2$ with two $(d-1)$-dimensional manifolds $\Gamma_1,~\Gamma_2$ such that the exterior normal derivative $\overrightarrow{n}\cdot\nabla u$ on $\Gamma_2$ is well-defined for every $u\in C^1(\overline{\mathcal{D}})$. Here, $\overrightarrow{n}$ is the outward unit normal vector to $\Gamma_2$ and $g:\Omega\times\Gamma_2\rightarrow\mathbb{R}$ a measurable function.
	Note that we just reduce the theoretical analysis to the case of homogeneous Dirichlet boundary conditions to simplify notation. It would be also possible to work under non-homogeneous Dirichlet boundary conditions, since such a problem can always be considered as a version of \eqref{EQ:EllProblem} - \eqref{EQ:EllProblemBCN} where the source term and the Neumann data have been changed (see also \cite[Remark 2.1]{AStudyOfElliptic}).
	
	We now state assumptions under which the elliptic boundary value problem has a unique solution.
	\begin{assumption}\label{ASS:ProblemAssumptionsGeneral}
	Let $H:=L^2(\mathcal{D})$. We assume that for all $\omega\in \Omega$ it holds that 
	\begin{enumerate}
	\item for any fixed $\underline{x}\in\mathcal{D}$ the mapping $\omega\mapsto a(\omega,\underline{x})$ is measurable, i.e. $a(\cdot,\underline{x})$ is a (real-valued) random variable.
	\item for any fixed $\omega\in\Omega$ the mapping $a(\omega,\cdot)$ is $\mathcal{B}(\mathcal{D})-\mathcal{B}(\mathbb{R}_+)$-measurable and it holds $a_{-}(\omega):=\underset{\underline{x}\in\mathcal{D}}{\operatorname{ess}\,\inf}\,a(\omega,\underline{x})>0$ and $a_+(\omega):=\underset{\underline{x}\in\mathcal{D}}{\operatorname{ess}\,\sup}\,a(\omega,\underline{x})<+\infty$,
	\item $\frac{1}{a_{-}}\in L^p(\Omega;\mathbb{R})$, $f\in L^q(\Omega;H)$ and $g\in L^q(\Omega;L^2(\Gamma_2))$ for some $p,q\in [1,+\infty]$ such that $r:=(\frac{1}{p} + \frac{1}{q})^{-1}\geq 1$. 
	\end{enumerate}
	\end{assumption}
	We identify $H$ by its dual space $H'$ and work on the Gelfand triplet $V\subset H\simeq H'\subset V'$. Hence, Assumption 
\ref{ASS:ProblemAssumptionsGeneral} guarantees that $f(\omega,\cdot)\in V'$ and $g(\omega,\cdot)\in H	^ {-\frac{1}{2}}(\Gamma_2)$ for $\mathbb{P}$-almost every $\omega\in \Omega$.
\begin{rem}
Note that Assumption \ref{ASS:ProblemAssumptionsGeneral} implies that the real-valued mappings $a_-,a_+:\Omega\rightarrow\mathbb{R}$ are measurable. This can be seen as follows:
For fixed $p\geq 1$ consider the mapping
\begin{align*}
I_p:\Omega &\rightarrow \mathbb{R},\\
\omega &\mapsto \|a(\omega,\cdot)\|_{L^p(\mathcal{D})} = (\int_{\mathcal{D}} a(\omega,\underline{x})^p d\underline{x}) )^{1/p},
\end{align*}
which is well-defined by Assumption~\ref{ASS:ProblemAssumptionsGeneral}. It follows from the definition of the Lebesgue integral and Assumption \ref{ASS:ProblemAssumptionsGeneral} \textit{i} that the mapping $\omega\mapsto I_p(\omega)$ is $\mathcal{F}-\mathcal{B}(\mathbb{R})$ measurable. For a fixed $\omega \in \Omega$, by the embedding theorem for $L^p$ spaces (see \cite[Theorem 2.14]{SobolevSpaces}), we get
\begin{align*}
a_+(\omega)=  \lim_{m\rightarrow\infty}\|a(\omega,\cdot)\|_{L^m(\mathcal{D})}.
\end{align*} 
Since this holds for all $\omega \in \Omega$ we obtain by \cite[Lemma 4.29]{InfiniteDimensionalAnalysis} that the mapping 
\begin{align*}
\omega \mapsto a_+(\omega),
\end{align*}
is $\mathcal{F}-\mathcal{B}(\mathbb{R})$-measurable. The measurability of $\omega\mapsto a_-(\omega)$ follows analogously.
Note that we do not treat the random coefficient $a:\Omega\times \mathcal{D}\rightarrow \mathbb{R}$ as a $L^\infty(\mathcal{D})$-valued random variable, since $L^\infty(\mathcal{D})$ is not separable and therefore the strong measurability of the mapping $a:\Omega\rightarrow L^\infty(\mathcal{D})$ is only guaranteed in a very restrictive setting. Nevertheless, the measurability of the functions $a_+$ and $a_-$ allows taking expectations of these real-valued random variables. In order to avoid confusion about that, we use the notation $\mathbb{E}(\operatorname{ess\, sup}\limits_{\underline{x}\in \mathcal{D}} |\cdot|^s)^{1/s}$.
\end{rem}

\subsection{Weak solution}
We denote by $H^1(\mathcal{D})$ the Sobolev space on $\mathcal{D}$ with the norm 
\begin{align*}
\|v\|_{H^1(\mathcal{D})}=\left(\int_D|v(\underline{x})|^2 + \|\nabla v(\underline{x})\|_2^2d\underline{x}\right)^2 \text{, for } v\in H^1(\mathcal{D}),
\end{align*}
(see for example \cite[Section 5.2]{PartialDifferentialEquations}). Here, $\|\underline{x}\|_2:=(\sum_{i=1}^d \underline{x}_i^2)^\frac{1}{2}$ denotes the Euclidean norm of the vector $\underline{x}\in \mathbb{R}^d$. Further, we denote by $T$ the trace operator with
\begin{align*}
T:H^1(\mathcal{D})\rightarrow H^{\frac{1}{2}}(\partial \mathcal{D})
 \end{align*}
where $Tv=v|_{\partial \mathcal{D}}$ for $v\in C^\infty (\overline{\mathcal{D}})$ (see \cite{TraceTheoremLipDomain}).
We define the subspace $V\subset H^1(\mathcal{D})$ as
\begin{align*}
V:=\{v\in H^1(\mathcal{D})~|~Tv|_{\Gamma_1}=0\},
\end{align*}
with the standard Sobolev norm, i.e. $\|\cdot\|_V:=\|\cdot\|_{H^1(\mathcal{D})}$.

We multiply Equation \eqref{EQ:EllProblem} by a test function $v\in V$, integrate by parts and use the boundary conditions \eqref{EQ:EllProblemBCD} and \eqref{EQ:EllProblemBCN} to obtain
\begin{align*}
\int_\mathcal{D}-\nabla\cdot(a(\omega,\underline{x})\nabla u(\omega,\underline{x}))v(\underline{x}) d\underline{x} =\int_\mathcal{D}a(\omega,\underline{x})\nabla u (\omega,\underline{x})\cdot\nabla v(\underline{x}) - \int_{\Gamma_2}g(\omega,\underline{x})[Tv](\underline{x})d\underline{x}.
\end{align*}
This leads to the following pathwise weak formulation of the problem: For any $\omega\in\Omega$, given $f(\omega,\cdot)\in V'$ and $g(\omega,\cdot)\in H^{-\frac{1}{2}}(\Gamma_2)$, find $u(\omega,\cdot)\in V$ such that 
\begin{align}\label{EQ:WeakFormProblem}
B_{a(\omega)}(u(\omega,\cdot),v) = F_\omega(v)
\end{align}
for all $v\in V$. The function $u(\omega,\cdot)$ is then called \emph{pathwise weak solution} to problem \eqref{EQ:EllProblem} - \eqref{EQ:EllProblemBCN}. Here, the bilinear form $B_{a(\omega)}$ and the operator $F_\omega$ are given by
\begin{align*}
B_{a(\omega)}:V\times V\rightarrow \mathbb{R}, ~(u,v)\mapsto \int_{\mathcal{D}}a(\omega,\underline{x})\nabla u(\underline{x})\cdot \nabla v(\underline{x})d\underline{x},
\end{align*}
and
\begin{align*}
F_\omega:V\rightarrow\mathbb{R}, ~v\mapsto \int_\mathcal{D}f(\omega,x)v(\underline{x})d\underline{x} + \int_{\Gamma_2}g(\omega,\underline{x})[Tv](\underline{x})d\underline{x},
\end{align*}	
for fixed $\omega\in \Omega$, where the integrals in $F_\omega$ are understood as the duality pairings:
\begin{align*}
\int_\mathcal{D}f(\omega,\underline{x})v(\underline{x})d\underline{x} = \prescript{}{V'}{\langle}f(\omega,\cdot),v\rangle_V
\end{align*}
and
\begin{align*}
\int_{\Gamma_2}g(\omega,\underline{x})[Tv](\underline{x})d\underline{x} = \prescript{}{H^{-\frac{1}{2}}(\Gamma_2)}{\langle	}g(\omega,\cdot),Tv\rangle_{H^\frac{1}{2}(\Gamma_2)},
\end{align*}
for $v\in V$.

\begin{theorem}\label{TH:ExistenceTheoremElliptic}(see \cite[Theorem 2.5]{AStudyOfElliptic})
Under Assumption \ref{ASS:ProblemAssumptionsGeneral}, there exists a unique pathwise weak solution $u(\omega,\cdot)\in V$ to problem \eqref{EQ:WeakFormProblem} for very $\omega\in \Omega$. Furthermore, $u\in L^r(\Omega;V)$ and 
\begin{align*}
\|u\|_{L^r(\Omega;V)}\leq C(a_-,\mathcal{D}, p)(\|f\|_{L^q(\Omega;H)} + \|g\|_{L^q(\Omega;L^2(\Gamma_2))}),
\end{align*}
where $C(a_-,\mathcal{D},p)>0$ is a constant depending on $a_-$, $p$ and the volume of $\mathcal{D}$.
\end{theorem}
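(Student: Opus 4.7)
\medskip

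The plan is to apply the Lax--Milgram lemma pathwise, deduce a deterministic bound on $\|u(\omega,\cdot)\|_V$ in terms of $a_-(\omega)$, $f(\omega,\cdot)$, $g(\omega,\cdot)$, and then integrate using Hölder's inequality with exponents matching the condition $\tfrac{1}{p}+\tfrac{1}{q}=\tfrac{1}{r}$. First I would fix $\omega\in\Omega$ and check the three hypotheses of Lax--Milgram on $V$. Continuity of $B_{a(\omega)}$ follows from Cauchy--Schwarz and Assumption~\ref{ASS:ProblemAssumptionsGeneral}: $|B_{a(\omega)}(u,v)|\le a_+(\omega)\|u\|_V\|v\|_V$. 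Coercivity follows from $a_-(\omega)>0$ combined with a Poincaré-type inequality on $V$ (using the vanishing trace on $\Gamma_1$), yielding $B_{a(\omega)}(u,u)\ge a_-(\omega)\,C_P(\mathcal{D})\,\|u\|_V^2$. Boundedness of $F_\omega\in V'$ comes from the Cauchy--Schwarz inequality applied to the volume term and from the trace theorem applied to the boundary term, giving $\|F_\omega\|_{V'}\le C(\mathcal{D})\bigl(\|f(\omega,\cdot)\|_H+\|g(\omega,\cdot)\|_{L^2(\Gamma_2)}\bigr)$. Thus Lax--Milgram produces a unique $u(\omega,\cdot)\in V$ solving \eqref{EQ:WeakFormProblem}.

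The standard Lax--Milgram estimate $\|u(\omega,\cdot)\|_V\le \|F_\omega\|_{V'}/\bigl(a_-(\omega)C_P(\mathcal{D})\bigr)$ then gives the pathwise bound
\begin{align*}
\|u(\omega,\cdot)\|_V \;\le\; \frac{C(\mathcal{D})}{a_-(\omega)}\Bigl(\|f(\omega,\cdot)\|_H+\|g(\omega,\cdot)\|_{L^2(\Gamma_2)}\Bigr).
\end{align*}
Taking the $L^r(\Omega)$ norm and applying Hölder's inequality with exponents $p/r$ and $q/r$ (using $\tfrac{1}{p}+\tfrac{1}{q}=\tfrac{1}{r}$) yields
\begin{align*}
\|u\|_{L^r(\Omega;V)} \;\le\; C(\mathcal{D})\,\bigl\|\tfrac{1}{a_-}\bigr\|_{L^p(\Omega;\mathbb{R})}\Bigl(\|f\|_{L^q(\Omega;H)}+\|g\|_{L^q(\Omega;L^2(\Gamma_2))}\Bigr),
\end{align*}
which is the claimed estimate after absorbing $\|1/a_-\|_{L^p(\Omega;\mathbb{R})}$ (finite by Assumption~\ref{ASS:ProblemAssumptionsGeneral}) into a constant $C(a_-,\mathcal{D},p)$.

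The main obstacle I expect is establishing the strong measurability of $\omega\mapsto u(\omega,\cdot)$ as a $V$-valued map, which is needed to justify that $u\in L^r(\Omega;V)$. Since $V$ is separable, strong measurability reduces to weak measurability; I would show that for each fixed $v\in V$, the map $\omega\mapsto B_{a(\omega)}(u(\omega,\cdot),v)=F_\omega(v)$ depends measurably on $\omega$, and then approximate $u(\omega,\cdot)$ by Galerkin solutions on an increasing sequence of finite-dimensional subspaces of $V$. On each finite-dimensional subspace, the Galerkin coefficients are determined by inverting a stiffness matrix whose entries depend measurably on $\omega$ (by Assumption~\ref{ASS:ProblemAssumptionsGeneral}\textit{i} and Fubini), so each Galerkin approximation is a measurable $V$-valued map; the pathwise Céa-type convergence in $V$ then transfers measurability to the limit $u(\omega,\cdot)$. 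Combined with the pathwise bound above and the already-established measurability of $a_-$, this yields $u\in L^r(\Omega;V)$ with the asserted estimate, completing the proof.
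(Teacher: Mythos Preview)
Your argument is correct and follows the standard route: pathwise Lax--Milgram with coercivity from $a_-(\omega)>0$ and a Poincar\'e inequality on $V$, the resulting bound $\|u(\omega,\cdot)\|_V\le C(\mathcal{D})\,a_-(\omega)^{-1}(\|f(\omega,\cdot)\|_H+\|g(\omega,\cdot)\|_{L^2(\Gamma_2)})$, and then H\"older in $\Omega$ with exponents $p,q$. Note, however, that the paper does not supply its own proof of this theorem; it is quoted from \cite[Theorem~2.5]{AStudyOfElliptic}, so there is no in-paper argument to compare against for the existence and $L^r$ bound.

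Where the paper does add something is on the measurability of $\omega\mapsto u(\omega,\cdot)$, treated in the remark immediately following the theorem, and here your approach genuinely differs. You propose to obtain measurability via Galerkin approximations: the finite-dimensional stiffness matrices have measurable entries, matrix inversion is continuous, and C\'ea-type convergence passes measurability to the limit. The paper instead argues directly at the level of the full solution: it introduces the Carath\'eodory maps $J_n(\omega,v)=B_{a(\omega)}(v,v_n)-F_\omega(v_n)$ for an orthonormal basis $(v_n)$ of $V$, observes that each zero set $\phi_n(\omega)=\{v:J_n(\omega,v)=0\}$ has a measurable graph, intersects over $n$ to obtain the graph of the solution map, and then invokes the projection theorem. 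Your Galerkin route is more constructive and closer in spirit to how one actually computes; the paper's route is shorter and avoids any approximation argument, but relies on the measurable-projection machinery (and implicitly on completeness of $(\Omega,\mathcal{F},\mathbb{P})$). Both are valid.
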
	

In addition to the existence of the solution, the following remark gives a rigorous justification for the measurability of the solution mapping 
\begin{align*}
u:\Omega&\rightarrow V\\
\omega &\mapsto u(\omega,\cdot),
\end{align*}
which maps any $\omega \in \Omega$ on the corresponding pathwise weak PDE solution. 

\begin{rem}\label{Rem:Measurability of the solution}
Let $(v_n,~n\in\mathbb{N})\subset V$ be an orthonormal basis of the separable Hilbert space $V$. For every $n\in \mathbb{N}$ we define the mapping
\begin{align*}
J_n:\Omega \times V &\rightarrow \mathbb{R}\\
(\omega,v)&\mapsto \int_\mathcal{D} a(\omega,\underline{x}) \nabla v(\underline{x})\cdot \nabla v_n(\underline{x})d\underline{x} - \int_\mathcal{D}f(\omega,\underline{x})v_n(\underline{x})d\underline{x} - \int_\mathcal{\Gamma_2}g(\omega,\underline{x})[Tv_n](\underline{x})d\underline{x}.
\end{align*}
It is easy to see that this mapping is Carathéodory for any $n\in\mathbb{N}$, i.e. $J_n(\cdot,v)$ is $\mathcal{F}$-$\mathcal{B}(\mathbb{R})$-measurable for any fixed $v\in V$ and $J_n(\omega,\cdot)$ is continuous on $V$ for any fixed $\omega\in\Omega$. We define the correspondences
\begin{align*}
\phi_n(\omega):=\{v\in V~|~J_n(\omega,v)=0\},
\end{align*}
for every $n\in \mathbb{N}$. It follows from \cite[Corollary 18.8]{InfiniteDimensionalAnalysis} that this correspondence has a measurable graph, i.e.
\begin{align*}
\{(\omega,v)\in \Omega\times V~|~v\in\phi_n(\omega)\}\in\mathcal{F}\otimes \mathcal{B}(V).
\end{align*}
Further, by Assumption \ref{ASS:ProblemAssumptionsGeneral} and the Lax-Milgram Lemma (see for example \cite[Lemma 6.97]{EllipticDifferentialEquations} and \cite[Theorem 2.5]{AStudyOfElliptic}) we know that for every fixed $\omega\in\Omega$, there exists a unique solution $u(\omega,\cdot)\in V$ satisfying \eqref{EQ:WeakFormProblem} for every $v\in V$. Therefore, we obtain for the graph of the solution mapping:
\begin{align*}
 \{(\omega,u(\omega,\cdot))~|~\omega\in\Omega\} &= \{(\omega,v)\in \Omega\times V~|~J_n(\omega,v)=0,\text{ for all }n\in\mathbb{N}\} \\
 &=\bigcap_{n\in\mathbb{N}} \{(\omega,v)\in \Omega\times V~|~ v\in\phi_n	(\omega) \} \in \mathcal{F}\otimes \mathcal{B}(V). 
\end{align*}
This implies for an arbitrary measurable set $\tilde{V}\in \mathcal{B}(V)$ 
\begin{align*}
\{(\omega,u(\omega,\cdot))~|~\omega\in\Omega\} \cap \Omega\times \tilde{V} \in \mathcal{F}\otimes \mathcal{B}(V)
\end{align*}
and therefore 
\begin{align*}
\{\omega\in\Omega~|~u(\omega,\cdot) \in \tilde{V}\}\in\mathcal{F},
\end{align*}
by the projection theorem (see \cite[Theorem 18.25]{InfiniteDimensionalAnalysis}), which gives the measurability of the solution mapping.

\end{rem}

\section{Subordinated Gaussian random fields}\label{sec:subordinated_GRF}

A random field $W:\Omega\times \mathcal{D}	\rightarrow \mathbb{R}$ is called $\mathbb{R}-valued$ \textit{Gaussian random field (GRF)} if for any tuple $(\underline{x}_1,\dots,\underline{x}_n)\subset \mathcal{D}$ and any number $n\in \mathbb{N}$ the $\mathbb{R}^n$-valued random variable
\begin{align*}
[W(\underline{x}_1),\dots,W(\underline{x}_n)]^T:\Omega\rightarrow  \mathbb{R}^n
\end{align*}
is multivariate normally distributed (see \cite[Section 1.2]{RandomFieldsAndGeometry}). Here $\underline{x}^T$ denotes the transpose of the vector $\underline{x}$.
We denote by 
\begin{align*}
m(\underline{x})&:=\mathbb{E}(W(\underline{x})),~\underline{x}\in\mathcal{D}, \text{ and }\\
q(\underline{x},\underline{y})&:=Cov(W(\underline{x}),W(\underline{y})),~\underline{x},\underline{y}\in\mathcal{D},
\end{align*}
the associated mean and covariance function. The \textit{covariance operator} $Q:L^2(\mathcal{D})\rightarrow L^2(\mathcal{D})$ of $W$ is defined by 
\begin{align*}
Q(\psi)(\underline{x})=\int_\mathcal{D}q(\underline{x},\underline{y})\psi(\underline{y})d\underline{y} \text{ for } \underline{x}\in \mathcal{D}.
\end{align*}
Further, if $\mathcal{D}\subset \mathbb{R}^d$ is compact and $W$ is centered, i.e. $m\equiv 0$, there exists a decreasing sequence $(\lambda_i,~i\in \mathbb{N})$ of real eigenvalues of $Q$ with corresponding eigenfunctions $(e_i,~i\in\mathbb{N})\subset L^2(\mathcal{D})$ which form an orthonormal basis of $L^2(\mathcal{D})$ (see \cite[Section 3.2]{RandomFieldsAndGeometry} and \cite[Theorem VI.3.2 and Chapter II.3]{Funktionalanalysis}).\\

\subsection{Construction of subordinated GRFs}\label{Subsec:AShortIntroductionTo}

A real-valued stochastic process $l=(l(t),~t\geq 0)$ is said to be a \textit{L\'evy process} if $l(0)=0$ $\mathbb{P}$-a.s., $l$ has independent and stationary increments and $l$ is stochastically continuous (see \cite[Section 1.3]{LevyProcessesAndStochasticCalculus}).
One of the most important properties of L\'evy processes is the so called \textit{L\'evy-Khinchin formula}. 
\begin{theorem}\label{TH:LevyKhinchine}(L\'evy-Khinchin formula, see \cite[Th. 1.3.3]{LevyProcessesAndStochasticCalculus})\\
Let $l$ be a real-valued L\'evy process on $\subset \mathbb{R}_+:=[0,+\infty)$. There exist constants, $\gamma_l \in \mathbb{R}$, $\sigma_l^2\in \mathbb{R}_+$ and a measure $\nu$ on $(\mathcal{T},\mathcal{B}(\mathcal{T}))$ such that the characteristic function $\phi_{l(t)}$, for $t\in \mathbb{R}_+$, admits the representation 
\begin{align*}
\phi_{l(t)}(u) := \mathbb{E}(\exp(iul(t))) = \exp\left(t\left(i\gamma_l u- \frac{\sigma_l^2}{2}u^2 + \int_{\mathbb{R}\setminus \{0\}} e^{iuy} - 1 - iuy\mathds{1}_{\{|y|\leq 1\}}\nu(dy)\right)\right).
\end{align*}

\end{theorem}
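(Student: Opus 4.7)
The plan is to reduce the statement to the classical L\'evy-Khinchin representation for infinitely divisible distributions, exploiting the three defining properties of a L\'evy process. First, I fix $t\geq 0$ and $n\in\N$. By stationarity and independence of increments one may decompose
\begin{align*}
l(t)=\sum_{k=1}^n \bigl(l(kt/n)-l((k-1)t/n)\bigr),
\end{align*}
as a sum of $n$ i.i.d.\ copies of $l(t/n)$. In particular the law of $l(t)$ is infinitely divisible for every $t\geq 0$, and the same decomposition applied to $l(t+s)$ yields the functional equation
\begin{align*}
\phi_{l(t+s)}(u)=\phi_{l(t)}(u)\phi_{l(s)}(u),\qquad s,t\geq 0,~u\in\R.
\end{align*}

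Second, I would extract the exponential form $\phi_{l(t)}(u)=\exp(t\eta(u))$ for a characteristic exponent $\eta:\R\rightarrow\mathbb{C}$. Stochastic continuity of $l$ combined with L\'evy's continuity theorem makes $t\mapsto \phi_{l(t)}(u)$ continuous with $\phi_{l(0)}(u)=1$. Setting $g(t):=|\phi_{l(t)}(u)|^2$ gives a continuous, nonnegative, multiplicative function with $g(0)=1$; if $g(t_0)=0$ then by multiplicativity $g(t_0/2)^2=g(t_0)=0$, hence iteratively $g(t_0/2^k)=0$ for every $k\in\N$, contradicting $g(0)=1$. Thus $\phi_{l(t)}(u)\neq 0$ for all $t,u$, so the continuous path $t\mapsto \phi_{l(t)}(u)$ in $\mathbb{C}\setminus\{0\}$ starting at $1$ admits a unique continuous lift $\psi(\cdot,u)$ with $\exp(\psi(t,u))=\phi_{l(t)}(u)$ and $\psi(0,u)=0$. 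The multiplicative equation combined with continuity forces the additive Cauchy relation $\psi(t+s,u)=\psi(t,u)+\psi(s,u)$, whence $\psi(t,u)=t\eta(u)$ with $\eta(u):=\psi(1,u)$.

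Third --- and this is the main technical step --- I would identify the explicit form of $\eta$. Writing $l(t)=\sum_{k=1}^n X_{n,k}$ with $X_{n,k}$ i.i.d.\ distributed as $l(t/n)$, and noting that $l(t/n)\to 0$ in probability as $n\to\infty$ by stochastic continuity, places us in the classical framework of triangular arrays of uniformly asymptotically negligible summands. The associated convergence theorem for infinitely divisible laws then yields a unique triple $(\gamma_l,\sigma_l^2,\nu)$ with $\nu$ a Borel measure on $\R\setminus\{0\}$ satisfying $\int (y^2\wedge 1)\nu(dy)<\infty$ such that $\eta$ takes the claimed form. Concretely, I would first separate jumps of size $|y|>\varepsilon$ (which contribute a compound Poisson piece with explicit characteristic exponent), then treat the small jumps together with the drift (which coalesce into a Gaussian limit), and finally let $\varepsilon\downarrow 0$. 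The truncation term $-iuy\mathds{1}_{\{|y|\leq 1\}}$ appears precisely to render the integrand integrable with respect to $\nu$ near the origin.

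The principal obstacle is clearly Step~3: one must construct the L\'evy measure $\nu$ (morally as the vague limit of $n\cdot\mathbb{P}\circ l(1/n)^{-1}$ restricted away from $0$), simultaneously control the cumulative effect of the small jumps and the drift (which coalesce into the Gaussian component), and verify that the resulting triple $(\gamma_l,\sigma_l^2,\nu)$ does not depend on $t$. Steps~1 and~2 are largely bookkeeping, but the measure-theoretic construction in Step~3 requires careful truncation estimates. For the full technical implementation I would refer the reader to \cite[Th.~1.3.3]{LevyProcessesAndStochasticCalculus}.
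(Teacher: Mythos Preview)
Your sketch is a correct and standard outline of how the L\'evy--Khinchin formula is established: infinite divisibility via i.i.d.\ increments, extraction of the exponential form $\phi_{l(t)}(u)=\exp(t\eta(u))$ from multiplicativity and stochastic continuity, and identification of $\eta$ through a triangular-array limit argument. There is no gap in the strategy, and your own closing remark that Step~3 ultimately defers to \cite[Th.~1.3.3]{LevyProcessesAndStochasticCalculus} is honest and appropriate.

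That said, there is nothing to compare against here. The paper does not give a proof of this theorem at all: it is stated as a classical result with the citation \cite[Th.~1.3.3]{LevyProcessesAndStochasticCalculus} and used only to fix notation for the characteristic triplet $(\gamma_l,\sigma_l^2,\nu)$. So your proposal is not an alternative to the paper's argument --- it is simply a proof sketch for a result the paper takes as given. In the context of this paper, the expected treatment is exactly what you do in your last sentence: cite the reference and move on.
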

Motivated by Theorem \ref{TH:LevyKhinchine} we denote by $(\gamma_l,\sigma_l^2,\nu)$ the characteristic triplet of the L\'evy process $l$.
A (L\'evy-)subordinator is a L\'evy process which is \textit{non-decreasing} $\mathbb{P}$-a.s.. By \cite[Theorem 1.3.15]{LevyProcessesAndStochasticCalculus} it follows that the L\'evy triplet of a L\'evy-subordinator always admits the form $(\gamma_l,0,\nu)$ with a measure $\nu$ on $(\mathbb{R},\mathcal{B}(\mathbb{R}))$ satisfying
\begin{align*}
\nu(-\infty,0)=0 \text{ and } \int_0^\infty	 \min(y,1) \,\nu(dy)<\infty.
\end{align*}

\begin{rem}
Let $B=(B(t),~t\geq 0)$ be a standard Brownian motion and $l=(l(t),~t\geq 0)$ be a L\'evy subordinator. The stochastic process defined by 
\begin{align*}
L(t):=B(l(t)),~t\geq 0,
\end{align*}
is called \textit{subordinated Brownian motion} and is again a L\'evy process (see \cite[Theorem 1.3.25]{LevyProcessesAndStochasticCalculus}). 
\end{rem}

In \cite{SubordGRFTheory} the authors propose a new approach to extend standard subordinated L\'evy processes on a higher dimensional parameter space. Motivated by the rich class of subordinated Brownian motions, the authors construct discontinuous random fields by subordinating a GRF on a $d$-dimensional parameter domain by $d$ one-dimensional L\'evy subordinators. In case of a two-dimensional parameter space the construction is as follows: For a GRF $W:\Omega\times \mathbb{R}_+^2\rightarrow \mathbb{R}$ and two (L\'evy-)subordinators $l_1,~l_2$ on $[0,D]$, with a finite $D>0$, we define the real-valued random field
\begin{align*}
L(x,y):=W(l_1(x),l_2(y)),\text{ for } x,y\in[0,D].
\end{align*}
Figure~\ref{Fig:SamplesSubordGRF} shows samples of a GRF with Mart\'ern-1.5 covariance function and the corresponding subordinated field where we used Poisson and Gamma processes as subordinators.
\begin{figure}[ht]
	\centering
	\subfigure{\includegraphics[scale=0.29]{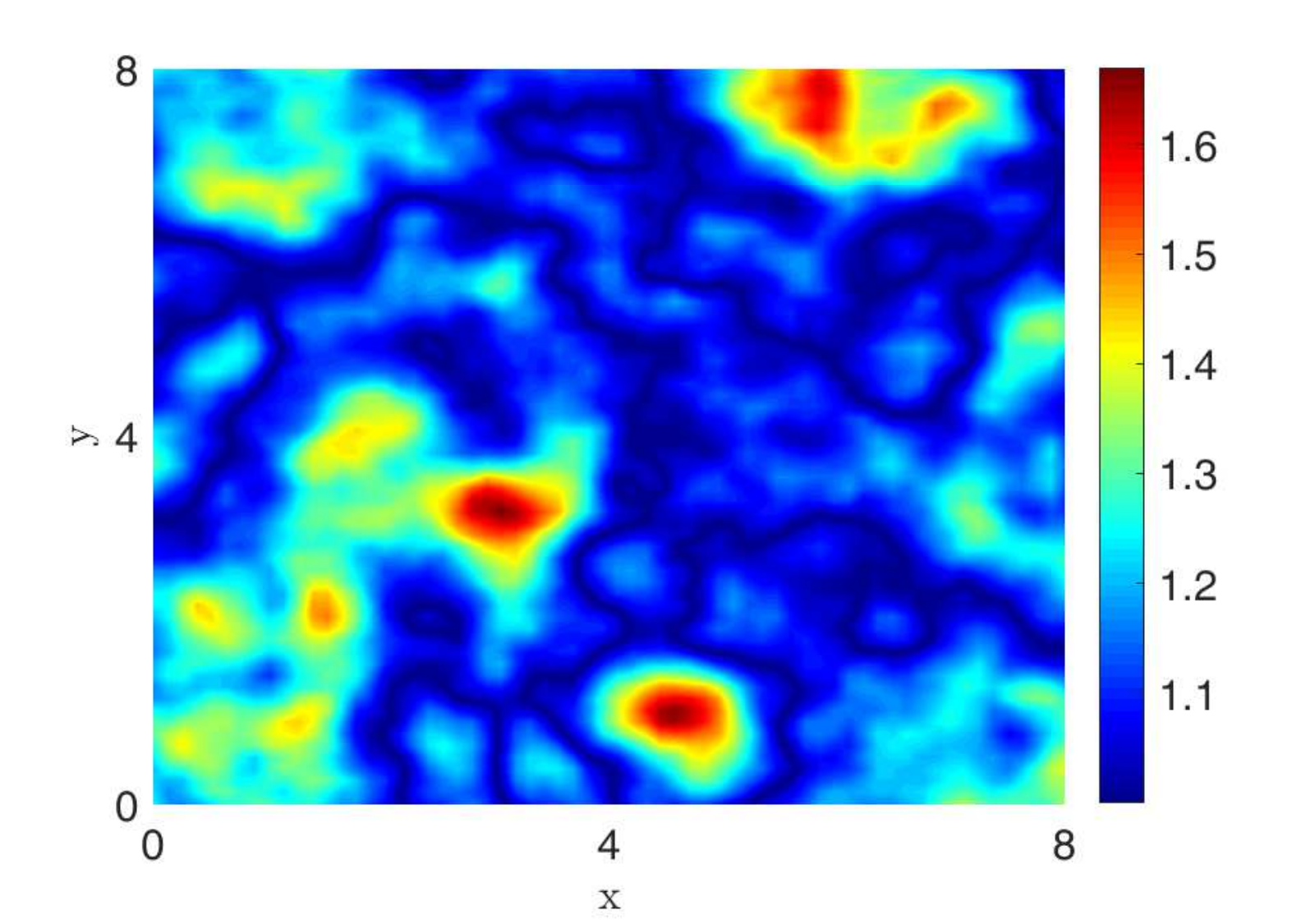}}
	\subfigure{\includegraphics[scale=0.29]{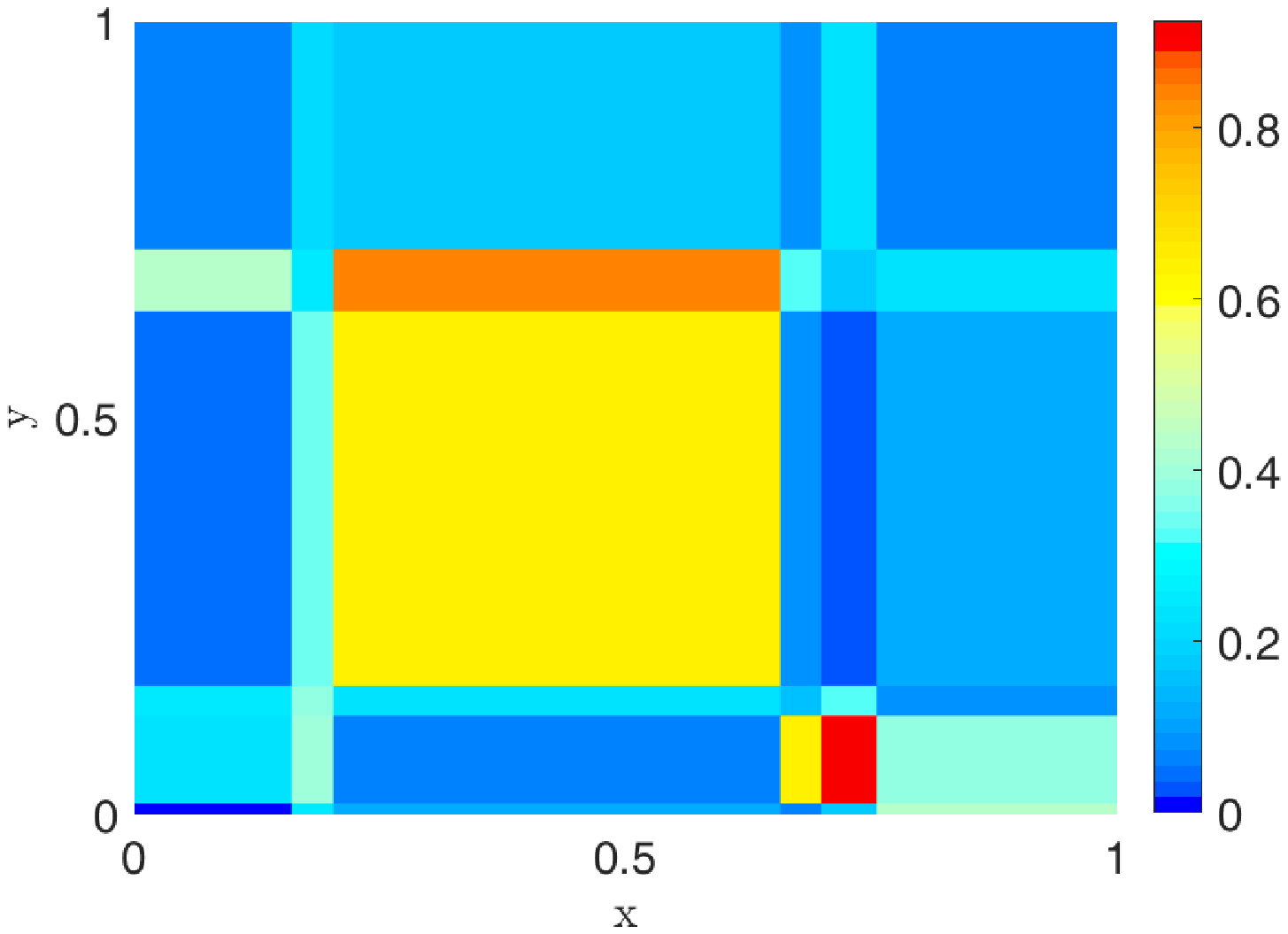}}
	\subfigure{\includegraphics[scale=0.29]{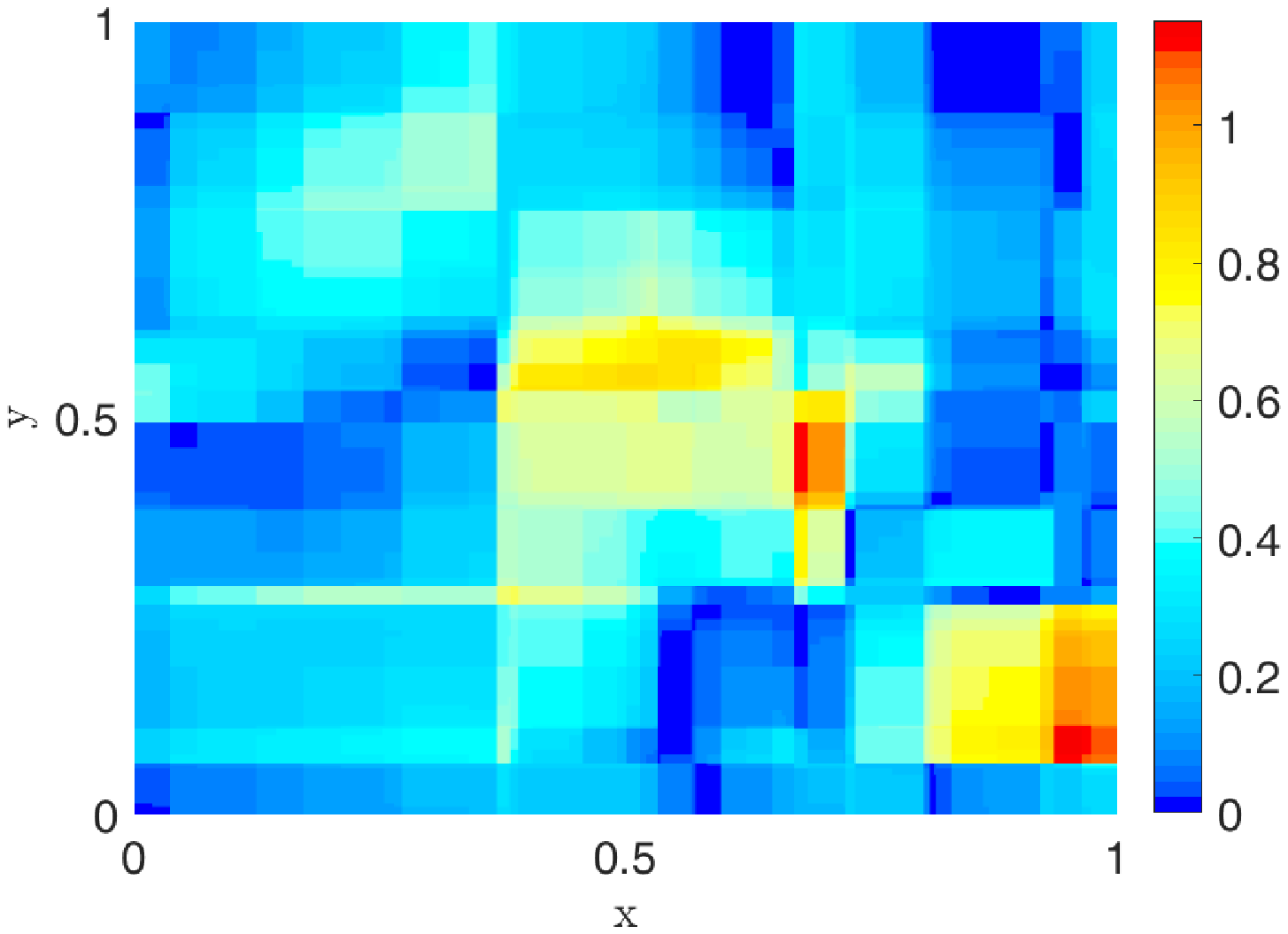}}
	\caption{Sample of Matérn-1.5-GRF (left), Poisson-subordinated GRF (middle) and Gamma-subordinated GRF (right).}\label{Fig:SamplesSubordGRF}
\end{figure}

This construction yields a rich class of discontinuous random fields which also admit a L\'evy-Khinchin-type formula. Further, the newly constructed random fields are also interesting for practical reasons, since they admit a semi-explicit formula for the covariance function which is very useful for applications, e.g. in statistical fitting.
For a theoretical investigation of the constructed random fields we refer to \cite{SubordGRFTheory}.

\subsection{Subordinated GRFs as diffusion coefficients in elliptic problems}
In the following, we define the specific diffusion coefficient that we consider in problem~\eqref{EQ:EllProblem} - \eqref{EQ:EllProblemBCN}. In order to allow discontinuities, we incorporate a subordinated GRF in the coefficient additionally to a Gaussian component. The construction of the coefficient is done so that Theorem \ref{TH:ExistenceTheoremElliptic} is applicable and, at the same time, the coefficient is as versatile as possible. 
\begin{definition}\label{DEF:DefCoeff}
We consider the domain $\mathcal{D}=(0,D)^2$ with $D<+\infty$\footnote{For simplicity we chose a square domain, recangular ones may be considered in the same way}. We define the jump-diffusion coefficient $a$ in problem \eqref{EQ:EllProblem} - \eqref{EQ:EllProblemBCN} with $d=2$ as
\begin{align}\label{EQ:DiffCoeffDefi}
a:\Omega\times\mathcal{D}\rightarrow (0,+\infty),~(\omega,x,y)\mapsto \overline{a}(x,y) + \Phi_1(W_1(x,y)) + \Phi_2(W_2(l_1(x),l_2(y))),
\end{align}
where 
\begin{itemize}
\item $\overline{a}:\mathcal{D}\rightarrow (0,+\infty)$ is deterministic, continuous and there exist constants $\overline{a}_+,\overline{a}_->0$ with $\overline{a}_-\leq \overline{a}(x,y)\leq \overline{a}_+$ for $(x,y)\in\mathcal{D}$.
\item $\Phi_1,~\Phi_2:\mathbb{R}\rightarrow [0,+\infty)$ are continuous .
\item $W_1$ and $W_2$ are zero-mean GRFs on $\mathcal{D}$ respectively on $[0,+\infty)^2$ with $\mathbb{P}-a.s.$ continuous paths.
\item $l_1$ and $l_2$ are L\'evy subordinators on $[0,D]$ with L\'evy triplets $(\gamma_1,0,\nu_1)$ and $(\gamma_2,0,\nu_2)$ which are independent of the GRFs $W_1$ and $W_2$.
\end{itemize}
\end{definition}	

\begin{rem} 
The first two assumptions ensure that the diffusion coefficient $a$ is positive over the domain $\mathcal{D}$. To show the convergence of the approximated diffusion coefficient in Subsection \ref{SUBSUBSEC:BoundOnE1} we have to impose independence of the GRFs $W_1$ and $W_2$ (see Assumption~\ref{ASS:GRFsIndependent} and the proof of Theorem~\ref{TH:QuantificationOfDiffApprLinftynorm}). This assumption is in the sense natural as also one-dimensional L\'evy processes admit an additive decomposition into a continuous part and a pure-jump part which are stochastically independent (L\'evy-It\^{o} decomposition, see e.g. \cite[Theorem 2.4.11]{LevyProcessesAndStochasticCalculus}). For the same reason the assumption that the L\'evy subordinators are independent of the GRFs is also natural (see for example \cite[Section 1.3.2]{LevyProcessesAndStochasticCalculus}). 
\end{rem}

In order to verify Assumption \ref{ASS:ProblemAssumptionsGeneral} \textit{i} and \textit{ii} we need the following Lemma.
\begin{lemma}\label{lem:MeasurabilityOfSubordGRF}
For fixed $(x,y)\in\mathcal{D}$ the mapping $\omega \mapsto a(\omega,x,y)$ is $\mathcal{F} -\mathcal{B}(\mathbb{R}_+)$-measurable. Further, for fixed $\omega \in \Omega$, the mapping $(x,y)\mapsto a(\omega,x,y)$ is $\mathcal{B}(\mathcal{D})-\mathcal{B}(\mathbb{R}_+)$-measurable.
\end{lemma}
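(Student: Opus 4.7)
The plan is to treat each of the three summands in \eqref{EQ:DiffCoeffDefi} separately and then invoke that sums of (Borel) measurable $\mathbb{R}$-valued functions are measurable. The deterministic term $\overline{a}$ is continuous and does not depend on $\omega$, so it is $\mathcal{B}(\mathcal{D})$-measurable and trivially $\mathcal{F}$-measurable. For the GRF term $\Phi_1(W_1(x,y))$, note that for fixed $(x,y)$, $W_1(\cdot,x,y)$ is by definition a real-valued random variable, and $\Phi_1$ is continuous hence Borel, so $\omega\mapsto \Phi_1(W_1(\omega,x,y))$ is $\mathcal{F}$-measurable; for fixed $\omega$ outside the $\mathbb{P}$-null set on which $W_1(\omega,\cdot)$ fails to be continuous, $(x,y)\mapsto W_1(\omega,x,y)$ is continuous hence $\mathcal{B}(\mathcal{D})$-measurable, and composition with continuous $\Phi_1$ preserves measurability (on the null set we set the field to an arbitrary continuous modification, which does not affect measurability).

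The main work is the subordinated term $\Phi_2(W_2(l_1(x),l_2(y)))$. For the $(x,y)$-measurability at fixed $\omega$, I would use that L\'evy subordinators admit a c\`adl\`ag (in fact non-decreasing) modification, so the paths $x\mapsto l_1(\omega,x)$ and $y\mapsto l_2(\omega,y)$ are $\mathcal{B}([0,D])$-Borel measurable into $\mathbb{R}_+$. Consequently $(x,y)\mapsto(l_1(\omega,x),l_2(\omega,y))$ is $\mathcal{B}(\mathcal{D})$-$\mathcal{B}(\mathbb{R}_+^2)$-measurable. Since $W_2(\omega,\cdot,\cdot)$ is $\mathbb{P}$-a.s. continuous on $[0,\infty)^2$, it is in particular Borel, and composing with the continuous $\Phi_2$ yields the desired Borel measurability of $(x,y)\mapsto \Phi_2(W_2(\omega,l_1(\omega,x),l_2(\omega,y)))$.

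For $\mathcal{F}$-measurability at fixed $(x,y)$, the step that requires the most care is establishing joint $(\mathcal{F}\otimes\mathcal{B}([0,\infty)^2))$-measurability of the map $(\omega,s,t)\mapsto W_2(\omega,s,t)$. This is a standard consequence of $(s,t)$-continuity of paths combined with pointwise measurability in $\omega$: one approximates $W_2$ by step-field approximations on a dyadic grid in $(s,t)$, each of which is jointly measurable as a finite sum of indicators times real random variables, and passes to the limit using path continuity. Once joint measurability is in hand, I compose with the $\mathcal{F}$-measurable map $\omega\mapsto(\omega,l_1(\omega,x),l_2(\omega,y))$ (measurable because $l_1(\cdot,x),l_2(\cdot,y)$ are real random variables) to conclude that $\omega\mapsto W_2(\omega,l_1(\omega,x),l_2(\omega,y))$ is $\mathcal{F}$-measurable, and continuity of $\Phi_2$ finishes the argument.

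The main obstacle is the joint measurability of $W_2$ and the justification that composition with the random time change $(l_1,l_2)$ preserves measurability; the independence of $W_2$ from $(l_1,l_2)$ assumed in Definition~\ref{DEF:DefCoeff} is not needed for measurability but will play a role later. All other steps are bookkeeping involving closure of Borel measurability under sums and compositions with continuous maps.
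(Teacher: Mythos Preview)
Your proposal is correct and follows essentially the same route as the paper. The only notable difference is that where you sketch the joint $(\mathcal{F}\otimes\mathcal{B}([0,\infty)^2))$-measurability of $(\omega,s,t)\mapsto W_2(\omega,s,t)$ by hand via dyadic step-field approximation, the paper simply invokes the standard Carath\'eodory-function result (continuous in the spatial variable, measurable in $\omega$ $\Rightarrow$ jointly measurable) from \cite[Lemma~4.51]{InfiniteDimensionalAnalysis}; your argument is precisely the proof of that lemma, so the two are equivalent in content.
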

\begin{proof}
Since $\overline{a}$ and $\Phi_1,\Phi_2$ are deterministic and continuous functions, it is enough to show the measurability of the mapping $\omega\mapsto W_2(\omega,l_1(\omega,x),l_2(\omega,y))$ to confirm the first claim. This can be seen as follows. Since $(\omega,x,y)\mapsto W_2(\omega,x,y)$ is a Carath\'eodory function, it is $\mathcal{F}\otimes \mathcal{B}(\mathbb{R}_+^2)-\mathcal{B}(\mathbb{R})$-measurable by \cite[Lemma 4.51]{InfiniteDimensionalAnalysis}. Further, since $\omega\mapsto l_1(x,\omega)$ and $\omega\mapsto l_2(y,\omega)$ are $\mathcal{F}-\mathcal{B}(\mathbb{R}_+)$-measurable and $\omega\mapsto \omega $ is $\mathcal{F}-\mathcal{F}$-measurable we obtain by \cite[Lemma 4.49]{InfiniteDimensionalAnalysis} that the mapping $\omega\mapsto (\omega,l_1(\omega,x),l_2(\omega,y))$ is $\mathcal{F}-\mathcal{F}\otimes \mathcal{B}(\mathbb{R}_+^2)$-measurable and, hence, the composition mapping $\omega\mapsto W_2(\omega,l_1(\omega,x),l_2(\omega,y))$ is $\mathcal{F}-\mathcal{B}(\mathbb{R})$-measurable. 

Next, we show that $(x,y)\mapsto W_2(\omega,l_1(\omega,x),l_2(\omega,y))$ is $\mathcal{B}(\mathcal{D})-\mathcal{B}(\mathbb{R})$-measurable for fixed $\omega\in \Omega$. Then the second claim follows by the continuity of the deterministic functions $\overline{a},\Phi_1$ and $\Phi_2$ together with the pathwise continuity of the GRF $W_1$.  For a fixed $\omega\in \Omega$, the mapping $(x,y)\mapsto W_2(\omega,x,y)$ is continuous and therefore $\mathcal{B}(\mathbb{R}_+^2)-\mathcal{B}(\mathbb{R})$-measurable. Further, the mappings $x\mapsto l_1(\omega,x)$ and $y\mapsto l_2(\omega,y)$ are $\mathcal{B}((0,D))-\mathcal{B}(\mathbb{R}_+)$-measurable, since the L\'evy subordinators are c\`adl\`ag mappings. If we define the extended functions $\tilde{l}_1(\omega,x,y):=l_1(\omega,x)$ and $\tilde{l}_2(\omega,x,y):=l_2(\omega,y)$ we obtain that the mappings $(x,y)\mapsto \tilde{l}_1(\omega,x,y)$ and $(x,y)\mapsto \tilde{l}_2(\omega,x,y)$ are Carath\'eodory functions and therefore $\mathcal{B}(\mathcal{D})-\mathcal{B}(\mathbb{R}_+)$-measurable by \cite[Lemma 4.51]{InfiniteDimensionalAnalysis}. Finally, by \cite[Lemma 4.49]{InfiniteDimensionalAnalysis}, the mapping $(x,y)\mapsto (\tilde{l}_1(\omega,x,y),\tilde{l}_2(\omega,x,y))$ is $\mathcal{B}(\mathcal{D})-\mathcal{B}(\mathbb{R}_+^2)$-measurable and therefore the composition function $(x,y)\mapsto W_2(\omega,\tilde{l}_1(\omega,x,y),\tilde{l}_2(\omega,x,y))=W_2(\omega,l_1(\omega,x),l_2(\omega,y))$ is $\mathcal{B}(\mathcal{D})-\mathcal{B}(\mathbb{R})$-measurable.
\end{proof}

\noindent Definition~\ref{DEF:DefCoeff} guarantees the existence of a pathwise weak solution to problem~\eqref{EQ:EllProblem}, as we prove in the following theorem.
\begin{theorem}\label{TH:ExistenceOfSolutionSubord}
Let $a$ be as in Definition \ref{DEF:DefCoeff} and let $f\in L^q(\Omega;H),~g\in L^q(\Omega;L^2(\Gamma_2))$ for some $q\in [1,+\infty)$. Then there exists a unique pathwise weak solution $u(\omega,\cdot)\in V$ to problem~\eqref{EQ:EllProblem} for $\mathbb{P}$-almost every $\omega\in\Omega$. Furthermore, $u\in L^r(\Omega;V)$ for all $r\in[1,q)$ and 
\begin{align*}
\|u\|_{L^r(\Omega;V)}\leq C(\overline{a}_-,\mathcal{D})(\|f\|_{L^q(\Omega;H)} + \|g\|_{L^q(\Omega;L^2(\Gamma_2))}),
\end{align*}
where $C(\overline{a}_-,\mathcal{D})>0$ is a constant depending only on the indicated parameter and the volume of $\mathcal{D}$.
\end{theorem}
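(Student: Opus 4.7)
The plan is to verify Assumption~\ref{ASS:ProblemAssumptionsGeneral} for the coefficient of Definition~\ref{DEF:DefCoeff} and then invoke Theorem~\ref{TH:ExistenceTheoremElliptic}. Items (i) and (ii) on measurability of $a(\cdot,x,y)$ and $a(\omega,\cdot)$ are already furnished by Lemma~\ref{lem:MeasurabilityOfSubordGRF}, so the real work is the pathwise two-sided bound $0<a_-(\omega)\leq a_+(\omega)<\infty$ together with the integrability condition on $1/a_-$.

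The lower bound comes essentially for free. Since $\Phi_1,\Phi_2\geq 0$ and $\overline{a}\geq \overline{a}_->0$, one has
\[
a(\omega,x,y)\;\geq\;\overline{a}(x,y)\;\geq\;\overline{a}_-\quad\text{for every }\omega\in\Omega,\;(x,y)\in\mathcal{D},
\]
so $a_-(\omega)\geq \overline{a}_-$ and consequently $1/a_-\leq 1/\overline{a}_-$ is a deterministic constant. In particular $1/a_-\in L^\infty(\Omega;\mathbb{R})$, and we may take $p=\infty$ in Assumption~\ref{ASS:ProblemAssumptionsGeneral}(iii), which yields $r=(1/\infty+1/q)^{-1}=q$ in Theorem~\ref{TH:ExistenceTheoremElliptic}.

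The main technical point is showing $a_+(\omega)<\infty$ $\mathbb{P}$-almost surely. I would treat the three summands of $a$ separately on the compact closure $\overline{\mathcal{D}}=[0,D]^2$. The deterministic part $\overline{a}$ is bounded by $\overline{a}_+$ by assumption. The term $\Phi_1(W_1)$ is pathwise bounded because $W_1$ has $\mathbb{P}$-a.s.\ continuous sample paths on the compact set $\overline{\mathcal{D}}$ and $\Phi_1$ is continuous. For the subordinated term $\Phi_2\bigl(W_2(l_1(\cdot),l_2(\cdot))\bigr)$, I use the fact that $l_1,l_2$ are càdlàg on $[0,D]$ and hence pathwise bounded: there exist random constants $M_1(\omega),M_2(\omega)<\infty$ with $l_i(x)\leq M_i(\omega)$ for every $x\in[0,D]$. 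Therefore the random image $\{(l_1(x),l_2(y)):(x,y)\in\overline{\mathcal{D}}\}$ lies in the compact box $[0,M_1(\omega)]\times[0,M_2(\omega)]\subset[0,\infty)^2$, on which the continuous sample path of $W_2$ attains a finite supremum. Combined with the continuity of $\Phi_2$, this gives a finite pathwise bound on the third summand, and summing the three contributions yields $a_+(\omega)<\infty$ $\mathbb{P}$-a.s. (modifying on a null set if necessary to have the bound hold for every $\omega$).

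With Assumption~\ref{ASS:ProblemAssumptionsGeneral} verified, Theorem~\ref{TH:ExistenceTheoremElliptic} applied with $p=\infty$ delivers existence and uniqueness of the pathwise weak solution $u(\omega,\cdot)\in V$ for $\mathbb{P}$-almost every $\omega$, membership $u\in L^r(\Omega;V)$ for $r=q$ (and a fortiori for every $r\in[1,q)$), and the quoted a~priori estimate; the constant $C(a_-,\mathcal{D},p)$ reduces to $C(\overline{a}_-,\mathcal{D})$ because it enters Theorem~\ref{TH:ExistenceTheoremElliptic} only through $\|1/a_-\|_{L^p(\Omega)}\leq 1/\overline{a}_-$. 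The only non-routine step in the entire argument is the pathwise supremum bound for the subordinated GRF, which is handled as above by exploiting the càdlàg boundedness of the subordinators on $[0,D]$ to reduce $W_2\circ(l_1,l_2)$ to a continuous function on an almost surely compact subset of $[0,\infty)^2$; the remainder of the proof is bookkeeping.
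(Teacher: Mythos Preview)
Your proof is correct and follows essentially the same route as the paper: verify the deterministic lower bound $a_-(\omega)\geq\overline a_-$ from $\Phi_1,\Phi_2\geq 0$, obtain $a_+(\omega)<\infty$ by confining the subordinated argument to the random compact box $[0,l_1(\omega,D)]\times[0,l_2(\omega,D)]$ and using continuity of $W_2$ there, and then invoke Theorem~\ref{TH:ExistenceTheoremElliptic}. The only cosmetic difference is that you take $p=\infty$ directly (even obtaining $r=q$, slightly more than stated), whereas the paper picks a finite $p=(1/r-1/q)^{-1}$ for each $r<q$; since $1/a_-$ is bounded by the deterministic constant $1/\overline a_-$, both choices are available and the arguments are equivalent.
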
	
\begin{proof}
In order to apply Theorem \ref{TH:ExistenceTheoremElliptic} we have to verify Assumption \ref{ASS:ProblemAssumptionsGeneral}. We have $a_-(\omega)=\inf\limits_{x\in\mathcal{D}}a(\omega,x)\geq \overline{a}_- $ for every fixed $\omega\in\Omega$ by Definition~\ref{DEF:DefCoeff}. Further, $W_2(\omega)$ is continuous on $K(\omega):=[0,l_1(\omega,D)]\times[0,l_2(\omega,D)]$ and therefore
\begin{align*}
a_+(\omega)=\underset{(x,y)\in\mathcal{D}}{\sup}\,a(\omega,x,y)\leq \overline{a}_+ + \underset{(x,y)\in\mathcal{D}}{\sup}\,\Phi_1(W_1(\omega,x,y)) + \underset{(x,y)\in K(\omega)}{\sup}\,\Phi_2(W_2(\omega,x,y))<+\infty.
\end{align*}
For $1\leq r<q$ define $p:=(\frac{1}{r}-\frac{1}{q})^{-1}>0$. We observe that
\begin{align*}
0\leq \frac{1}{a_-(\omega)}\leq \frac{1}{\overline{a}_-}<\infty,
\end{align*}
$\mathbb{P}$-a.s. and hence $1/a_-\in L^p(\Omega;\mathbb{R})$. Therefore, Assumption \ref{ASS:ProblemAssumptionsGeneral} holds with $r=(1/p + 1/q)^{-1}$ and the assertion follows by Theorem \ref{TH:ExistenceTheoremElliptic}.
\end{proof}

\section{Approximation of the diffusion coefficient}\label{sec:approx_GRF}
To simulate the solution to the elliptic equation we need to define a tractable approximation of the diffusion coefficient. 

In order to approximate the solution to problem \eqref{EQ:EllProblem} - \eqref{EQ:EllProblemBCN} we face a new challenge regarding the GRF $W_2$ which is subordinated by the L\'evy processes $l_1$ and $l_2$: Due to the fact that the L\'evy subordinators in general can attain any value in $[0,+\infty)$ we have to consider (and approximate) the GRF $W_2$ on the \emph{unbounded} domain $[0,+\infty)$. In most cases where elliptic PDEs of the form \eqref{EQ:EllProblem} have been considered with a random coefficient, the problem is stated on a bounded domain, see e. g. \cite{AStudyOfElliptic, FiniteElementErrorAnalysisOfEllipticPDEsWIthRandomCoefficients, StrongAndWeakErrorEstimatesForTheSolutionsOfEllipticPDEsWithRandomCoefficients, QuasiMonteCarloFEMethodsForEllipticPDEsWithLognormalRandomCoefficients, CirculantEmbeddingWithWMCAnalysisForEllipicPDEWithLognormalCoefficients}. Many regularity results for GRFs formulated for a bounded parameter space cannot easily be transferred to an unbounded parameter space (see also \cite[Chapter 1]{RandomFieldsAndGeometry}, especially the discussion on p. 13). Even the Karhunen-Lo\`eve expansion of a GRF requires compactness of the domain (see e.g. \cite[Section 3.2]{RandomFieldsAndGeometry}). 

Furthermore, to show convergence of the solution in Section \ref{sec:convergence} we need to bound the coefficient from above by a deterministic upper bound $A$ (see Theorem~\ref{TH:ErrorBoundE2} Remark \ref{REM:OnDiffCutoff}). Subsequently we show that this induces an error in the solution approximation which can be controlled and which vanishes for growing $A$ (see Subsection \ref{SUBSUBSEC:BoundOnE1}).

Therefore, we derive an approximation in three steps: First, we bound the subordinators, and, second, we cut-off the diffusion coefficient itself. Finally, we consider approximations of the GRFs and the subordinators itself and prove the convergence of this approximation of the diffusion coefficient under suitable assumptions.
	
\subsection{First approximation: bounding the L\'evy subordinators}

For a fixed $K\in (0,+\infty)$, we define the cut-function $\chi_K:[0,+\infty)\rightarrow [0,K]$ as $\chi_K(z):=\min(z,K)$ for $z\in[0,+\infty)$. Instead of problem~\eqref{EQ:EllProblem} we consider the following modificated problem
\begin{align}\label{EQ:EllProblemCut}
-\nabla(a_K(\omega,\underline{x})\nabla u_K(\omega,\underline{x}))=f(\omega,\underline{x}) \text{ in }\Omega\times\mathcal{D},
	\end{align}
	and impose the boundary conditions
	\begin{align}
	u_K(\omega,\underline{x})&=0 \text{ on } \Omega\times \Gamma_1,\label{EQ:EllProblemBCDCut}\\
	a_K(\omega,\underline{x}) \overrightarrow{n}\cdot\nabla u_K(\omega,\underline{x})&=g(\omega,\underline{x}) \text{ on } \Omega\times \Gamma_2.\label{EQ:EllProblemBCNCut}
\end{align} 
Here, the diffusion coefficient is defined by 
\begin{align}\label{EQ:DiffCoeffDefiCut}
a_K:\Omega\times\mathcal{D}\rightarrow (0,+\infty),~(\omega,x,y)\mapsto \overline{a}(x,y) + \Phi_1(W_1(x,y)) + \Phi_2(W_2(\chi_K(l_1(x)),\chi_K(l_2(y)))).
\end{align}	
For functions $f\in L^q(\Omega;H)$ and $g\in L^q(\Omega;L^2(\Gamma_2))$ with $q\in [1,+\infty)$, there exists a weak solution $u_K\in L^r(\Omega;V)$ to problem~\eqref{EQ:EllProblemCut} - \eqref{EQ:EllProblemBCNCut} for $r\in[1,q)$ (see Theorem \ref{TH:ExistenceOfSolutionSubord})\footnote{For simplicity we assume one fixed $K$ for all spacial dimensions. The results in the subsequent sections hold for individual independent values in each spacial dimension as well.}.
\begin{rem}\label{rem:approxK}
We note that the influence of this problem modification can be controlled: one may choose $K>0$ such that
\begin{align*}
\mathbb{P}(\max(\underset{x\in[0,D]}{\sup}\,l_1(x),\underset{y\in[0,D]}{\sup}\,l_2(y))\geq K) = \mathbb{P}(\max(l_1(D),l_2(D))\geq K)<\varepsilon  
\end{align*} 
for any $\varepsilon>0$. In other words, pathwise the modified problem coincides with the original one up to a set of samples, whose probability can be made arbitrarily small.
\end{rem}

\subsection{Second modification: diffusion cut-off}
\label{Subs:diffusioncutoff}
We consider again the cut function $\chi_{A}(z) :=\min(z,A)$ for $z\in[0,+\infty)$ with a fixed positive number $A>0$ and consider the following problem

\begin{align}\label{EQ:EllProblemCutUpperCut}
-\nabla(a_{K,A}(\omega,\underline{x})\nabla u_{K,A}(\omega,\underline{x}))=f(\omega,\underline{x}) \text{ in }\Omega\times\mathcal{D},
	\end{align}
where we impose the boundary conditions
	\begin{align}
	u_{K,A}(\omega,\underline{x})&=0 \text{ on } \Omega\times \Gamma_1,\label{EQ:EllProblemBCDCutUpperCut}\\
	a_{K,A}(\omega,\underline{x}) \overrightarrow{n}\cdot\nabla u_{K,A}(\omega,\underline{x})&=g(\omega,\underline{x}) \text{ on } \Omega\times \Gamma_.\label{EQ:EllProblemBCNCutUpperCut}
\end{align} 
The diffusion coefficient $a_{K,A}$ is defined by 
\begin{align}\label{EQ:DiffCoeffDefiCutUpperCut}
a_{K,A}:\Omega\times\mathcal{D}&\rightarrow (0,+\infty),\notag\\
(\omega,x,y)&\mapsto \chi_{A}\Big(\overline{a}(x,y) + \Phi_1(W_1(x,y)) + \Phi_2(W_2(\chi_K(l_1(x)),\chi_K(l_2(y))))\Big).
\end{align}	
Again, Theorem~\ref{TH:ExistenceOfSolutionSubord} applies in this case and yields the existence of a pathwise weak solution $u_{K,A}\in L^r(\Omega;V)$ for $r\in[1,q)$ if $f\in L^q(\Omega;H)$ and $g\in L^q(\Omega;L^2(\Gamma_2))$. The error of the modification vanishes for growing $A$ as shown in the end of Section~\ref{SUBSUBSEC:BoundOnE1}.

\subsection{Approximation of GRF and subordinators}\label{sec:Workingassumptions}
Here, we show how to approximate the modificated diffusion coefficient $a_{K,A}$ using approximations $W_1^{\varepsilon_W}\approx W_1$, $W_2^{\varepsilon_W}\approx W_2$ of the GRFs and $l_1^{\varepsilon_l}\approx l_1$, $l_2^{\varepsilon_l}\approx l_2$ of the L\'evy subordinators.
To this end we need additional assumptions on the data of the elliptic problem, the covariance operators of the Gaussian fields and the subordinators.

\begin{assumption}\label{ASS:CutProblemEigenvalues}
Let $W_1$ be a zero-mean GRF on $[0,D]^2$ and $W_2$ be a zero-mean GRF on $[0,K]^2$. We denote by $q_1:[0,D]^2\times [0,D]^2\rightarrow\mathbb{R}$ and $q_2:[0,K]^2\times [0,K]^2\rightarrow\mathbb{R}$ the covariance functions of these random fields and by $Q_1,Q_2$ the associated covariance operators defined by
\begin{align*}
Q_j\phi=\int_{[0,z_j]^2}q_j((x,y),(x',y'))\phi(x',y')d(x',y'),
\end{align*}
for $\phi\in L^2([0,z_j]^2)$ with $z=(D,K)$ and $j=1,2$. We denote by $(\lambda_i^{(1)},e_i^{(1)},~i\in \mathbb{N})$ resp. $(\lambda_i^{(2)},e_i^{(2)},~i\in \mathbb{N})$ the eigenpairs associated to the covariance operators $Q_1$ and $Q_2$. In particular, $(e_i^{(1)},~i\in \mathbb{N})$ resp. $(e_i^{(2)},~i\in \mathbb{N})$ are ONBs of $L^2([0,D]^2)$ resp. $L^2([0,K]^2)$.
\begin{enumerate}
\item We assume that the eigenfunctions are continuously differentiable and there exist positive constants $\alpha, ~\beta, ~C_e, ~C_\lambda>0$ such that for any $i\in\mathbb{N}$ it holds

\begin{align*}
\|e_i^\hione\|_{L^\infty([0,D]^2)},~\|e_i^\hitwo\|_{L^\infty([0,K]^2)}&\leq C_e,\\
  \|\nabla e_i^\hione\|_{L^\infty([0,D]^2)},~\|\nabla	e_i^\hitwo\|_{L^\infty([0,K]^2)}&\leq C_e i^\alpha,~ \\
\sum_{i=1}^ \infty (\lambda_i^\hione + \lambda_i^\hitwo)i^\beta&\leq C_\lambda	< + \infty.
\end{align*}

\item There exist constants $\phi,~\psi, C_{lip}>0$ such that the continuous functions $\Phi_1,~\Phi_2:\mathbb{R}\rightarrow[0,+\infty)$ from Definition \ref{DEF:DefCoeff} satisfy
\begin{align*}
|\Phi_1'(x)|\leq \phi\, \exp(\psi |x|),~ |\Phi_2(x)-\Phi_2(y)|\leq C_{lip}\,|x-y| \text{ for } x,y\in \mathbb{R}.
\end{align*}
In particular, $\Phi_1\in C^1(\mathbb{R})$.

\item $f\in L^q(\Omega;H)$ and $g\in L^q(\Omega;L^2(\Gamma_2))$ for some $q\in (1,+\infty).$

\item $\overline{a}:\mathcal{D}\rightarrow (0,+\infty)$ is deterministic, continuous and there exist constants $\overline{a}_+,\overline{a}_->0$ with $\overline{a}_-\leq \overline{a}(x,y)\leq \overline{a}_+$ for $(x,y)\in\mathcal{D}$.

\item $l_1$ and $l_2$ are Lévy subordinatos on $[0,D]$ with Lévy triplets $(\gamma_1,0,\nu_1)$ and $(\gamma_2,0,\nu_2)$ which are intependent of the GRFs $W_1$ and $W_2$. Further, we assume that we have approximations $l_1^\apprlevy,~l_2^\apprlevy$ of these processes and there exist constants $\eta,C_l>0$ such that for every $s\in[1,\eta-1)$ it holds 
\begin{align*}
\mathbb{E}(|l_j(x)-l_j^\apprlevy(x)|^s)\leq C_l\varepsilon_l,
\end{align*}
for $\varepsilon_l >0$, $x\in[0,D]$ and $j=1,2$.
\end{enumerate}
\end{assumption}

\begin{rem}
Note that the first assumption on the eigenpairs of the GRFs is natural (see \cite{AStudyOfElliptic} and \cite{QuasiMonteCarloFEMethodsForEllipticPDEsWithLognormalRandomCoefficients}). For example, the case that $Q_1,~Q_2$ are Mat\'ern covariance operators are included. Assumption~\ref{ASS:CutProblemEigenvalues} \textit{ii} is necessary to be able to quantify the error of the approximation of the diffusion coefficient. Assumption~\ref{ASS:CutProblemEigenvalues} \textit{iii} is necessary to ensure the existence of a solution and has already been formulated in Assumption \ref{ASS:ProblemAssumptionsGeneral}. The last assumption ensures that we can approximate the L\'evy subordinators in an $L^s$-sense.
This can always be achieved under appropriate assumptions on the tails of the distribution of the subordinators, see~\cite[Assumption 3.6, Assumption 3.7 and Theorem 3.21]{ApproximationAndSimulation}.
\end{rem}
For any numerical simulation we have to approximate the GRF as well as the subordinating L\'evy processes, which results in an additional  approximation of the coefficient $a_{K,A}$ given in Equation~\eqref{EQ:DiffCoeffDefiCutUpperCut}. In the following we want to quantify the error induced by this approximation. 

It follows by an application of the Kolmogorov-Chentsov theorem (\cite[Theorem 3.5]{StochasticEquationsInInfiniteDimensions}) that $W_1$ and $W_2$ can be assumed to have H\"older-continuous paths with H\"older exponent $b \in (0,(2\gamma k-2)/(2k))$ for $0<\gamma\leq \min(1,\beta/(2\alpha))$ and every $k\in\mathbb{N}$ (see the proof of \cite[Lemma 3.5]{AStudyOfElliptic}). 
Further, it follows by an application of the Sobolev embedding theorem that 
$W_1\in L^n(\Omega;C^{0,\gamma}([0,D]^2))$ and $W_2\in L^n(\Omega;C^{0,\gamma}([0,K]^2))$, i.e.
\begin{align}\label{EQ:MeanHoelderContGRFs}
\mathbb{E}\left(\Big(\underset{z\neq z'\in [0,D]^2}{\sup}\,\frac{|W_1(z)-W_1(z')|}{|z-z'|_2^\gamma}\Big)^n\right),~\mathbb{E}\left(\Big(\underset{z\neq z'\in [0,K]^2}{\sup}\,\frac{|W_2(z)-W_2(z')|}{|z-z'|_2^\gamma}\Big)^n\right)<+\infty,
\end{align}
for every $n\in[1,+\infty)$ and $\gamma< \min(1,\beta/(2\,\alpha))$ (see \cite[Proposition 3.1]{StrongAndWeakErrorEstimatesForTheSolutionsOfEllipticPDEsWithRandomCoefficients}).  

Next, we prove a bound on the error of the approximated diffusion coefficient, where the GRFs are approximated by a discrete evaluation and (bi-)linear interpolation between these points (see \cite{AnalysisOfCirculantEmbeddingMethodsForSamplingStationaryRandomFields} and  \cite{CirculantEmbeddingWithWMCAnalysisForEllipicPDEWithLognormalCoefficients}).

\begin{lemma}\label{LE:StrongErrorBoundGRFApprox}
We consider the discrete grids $G_1^\apprgrf=\{(x_i,x_j)|~i,j=0,\dots,M_{\varepsilon_W}^{(1)}\}$ on $[0,D]^2$ and $G_2^\apprgrf=\{(y_i,y_j)|~i,j=0,\dots,M_{\varepsilon_W}^{(2)}\}$ on $[0,K]^2$ where $(x_i,~i=0,\dots,M_{\varepsilon_W}^{(1)})$ is an equidistant grid on $[0,D]$ with maximum step size $\varepsilon_W$ and $(y_i,~i=0,...,M_{\varepsilon_W}^{(2)})$ is an equidistant grid on $[0,K]$ with maximum step size $\varepsilon_W$. Further, let $W_1^\apprgrf$ and $W_2^\apprgrf$ be approximations of the GRFs $W_1,~W_2$ on the discrete grids $G_1^\apprgrf$ resp. $G_2^\apprgrf$ which are constructed by point evaluation of the random fields $W_1$ and $W_2$ on the grids and linear interpolation between the grid points. Under Assumption \ref{ASS:CutProblemEigenvalues} \textit{i} it holds for $n\in[1,+\infty)$:
\begin{align*}
\|W_1-W_1^\apprgrf\|_{L^n(\Omega;L^\infty([0,D]^2))}&\leq C(D,n)\varepsilon_W^\gamma\\
\|W_2-W_2^\apprgrf\|_{L^n(\Omega;L^\infty([0,K]^2))}&\leq C(K,n)\varepsilon_W^\gamma
\end{align*}
for $\gamma< \min (1,\beta/(2\alpha))$ where $\beta$ and $\alpha$ are the parameters from Assumption \ref{ASS:CutProblemEigenvalues}.
\end{lemma}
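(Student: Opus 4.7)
The plan is to leverage the Hölder regularity of the GRFs that is already established in \eqref{EQ:MeanHoelderContGRFs} and exploit the convex-combination structure of bilinear interpolation. The statement reduces to a deterministic pointwise interpolation estimate followed by taking expectations.

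First I would record what is available: by \eqref{EQ:MeanHoelderContGRFs} we have, for any $n\in[1,+\infty)$ and any $\gamma<\min(1,\beta/(2\alpha))$, that $W_1\in L^n(\Omega;C^{0,\gamma}([0,D]^2))$ and $W_2\in L^n(\Omega;C^{0,\gamma}([0,K]^2))$. In particular, the Hölder seminorms
\begin{align*}
|W_j(\omega,\cdot)|_{C^{0,\gamma}} := \underset{z\neq z'}{\sup}\,\frac{|W_j(\omega,z)-W_j(\omega,z')|}{|z-z'|_2^\gamma}, \quad j=1,2,
\end{align*}
are finite $\mathbb{P}$-almost surely and have finite $n$-th moments.

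Next I would carry out the pointwise interpolation estimate for fixed $\omega\in\Omega$. For any $z=(x,y)\in[0,D]^2$ there exists a grid cell $R=[x_i,x_{i+1}]\times[x_j,x_{j+1}]$ of $G_1^{\varepsilon_W}$ containing $z$, whose diameter is bounded by $\sqrt{2}\,\varepsilon_W$. By construction,
\begin{align*}
W_1^{\varepsilon_W}(\omega,z) = \sum_{k,l\in\{0,1\}} \lambda_k(x)\mu_l(y)\, W_1(\omega,x_{i+k},x_{j+l}),
\end{align*}
with nonnegative weights $\lambda_k(x),\mu_l(y)\in[0,1]$ satisfying $\sum_k\lambda_k(x)=\sum_l\mu_l(y)=1$. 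Since these weights form a convex combination summing to one, I can write
\begin{align*}
W_1(\omega,z)-W_1^{\varepsilon_W}(\omega,z) = \sum_{k,l\in\{0,1\}} \lambda_k(x)\mu_l(y)\bigl[W_1(\omega,z)-W_1(\omega,x_{i+k},x_{j+l})\bigr].
\end{align*}
Applying the Hölder bound to each of the four differences, each factor is controlled by $|W_1(\omega,\cdot)|_{C^{0,\gamma}}(\sqrt{2}\,\varepsilon_W)^\gamma$, and thus $|W_1(\omega,z)-W_1^{\varepsilon_W}(\omega,z)|\leq 2^{\gamma/2}\varepsilon_W^\gamma |W_1(\omega,\cdot)|_{C^{0,\gamma}}$. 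Taking the supremum over $z\in[0,D]^2$ preserves the bound on the right-hand side.

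Finally I would take the $L^n(\Omega)$-norm on both sides and invoke \eqref{EQ:MeanHoelderContGRFs}:
\begin{align*}
\|W_1-W_1^{\varepsilon_W}\|_{L^n(\Omega;L^\infty([0,D]^2))} \leq 2^{\gamma/2}\varepsilon_W^\gamma \,\bigl\||W_1(\cdot,\cdot)|_{C^{0,\gamma}}\bigr\|_{L^n(\Omega)} \leq C(D,n)\,\varepsilon_W^\gamma.
\end{align*}
The argument for $W_2$ is verbatim the same, with $[0,D]^2$ replaced by $[0,K]^2$ and the grid $G_1^{\varepsilon_W}$ by $G_2^{\varepsilon_W}$, yielding the constant $C(K,n)$. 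The main conceptual step is the Hölder regularity in \eqref{EQ:MeanHoelderContGRFs}, which is already in hand; the remaining interpolation bound is elementary because the bilinear interpolant is a partition-of-unity convex combination, so no derivative estimates on the interpolant are needed and the regularity assumption $\gamma<\min(1,\beta/(2\alpha))$ enters only through the Hölder seminorm of the field itself.
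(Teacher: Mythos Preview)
Your proof is correct and rests on the same key ingredient as the paper's, namely the H\"older regularity \eqref{EQ:MeanHoelderContGRFs}. The only difference is in how the pointwise interpolation bound is obtained: you write the bilinear interpolant explicitly as a convex combination of the four corner values and bound each difference $W_1(z)-W_1(\text{corner})$ directly via the H\"older seminorm. The paper instead argues that on each cell the bilinear interpolant attains its extrema at the corners (since the Hessian at the unique stationary point is indefinite), and then invokes the intermediate value theorem to find, for each $(x,y)$, a point $(x',y')$ in the same cell with $W_1^{(\varepsilon_W)}(x,y)=W_1(x',y')$; the estimate then reduces to a single H\"older increment. Both routes yield the same constant $2^{\gamma/2}$ and the same final bound; your partition-of-unity argument is slightly more direct since it avoids the Hessian observation and the IVT step.
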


\begin{proof}
Note that for any fixed $\omega\in\Omega$ and $\cD_{ij}:=[x_i,x_{i+1}]\times[y_j,y_{j+1}]$ with $i,j\in\{1,...,M_{\varepsilon_W}^{(1)}\}$, it holds
\begin{align*}
&\underset{(x,y)\in \cD_{ij}}{\max} W_1^\apprgrf(x,y),\underset{(x,y)\in \cD_{ij}}{\min} W_1^\apprgrf(x,y)\\
&\hspace{5cm}\in \{W_1(x_i,y_j),W_1(x_{i+1},y_j),W_1(x_i,y_{j+1}),W_1(x_{i+1},y_{j+1})\}.
\end{align*} 
This holds since $W_1^\apprgrf$ is constructed by (bi-)linear interpolation of the GRF $W_1$ and the piecewise linear interpolants attain their maximum and minimum at the corners (the Hessian evaluated at the (unique) stationary point of the bilinear basis functions is always indefinite). Therefore, for a fixed $(x,y)\in[x_i,x_{i+1}]\times[y_j,y_{j+1}]$ it follows from the intermediate value theorem that $W_1^\apprgrf(x,y)=W_1(x',y')$ for appropriate $(x',y')\in[x_i,x_{i+1}]\times[y_j,y_{j+1}]$. 
Using this observation we estimate
\begin{align*} 
\|W_1-W_1^\apprgrf\|_{L^n(\Omega;L^\infty([0,D]^2))}^n&=\mathbb{E}\Big(\underset{(x,y)\in[0,D]^2}{\sup}\,|W_1(x,y)-W_1^\apprgrf(x,y)|^n\Big)\\
&\leq \mathbb{E}\Big(\underset{\substack{(x,y), (x',y')\in[0,D]^2, \\ |(x,y)^T-(x',y')^T|_2\leq \sqrt{2}\varepsilon_W}}{\sup}\,|W_1(x,y)-W_1(x',y')|^n\Big)\\
&=\varepsilon_W^{n\,\gamma}\,\mathbb{E}\Big(\underset{\substack{(x,y), (x',y')\in[0,D]^2, \\ |(x,y)^T-(x',y')^T|_2\leq \sqrt{2}\varepsilon_W}}{\sup}\,\frac{|W_1(x,y)-W_1(x',y')|^n}{\varepsilon_W^{n\,\gamma}}\Big)\\
&\leq 2^\frac{n\,\gamma}{2}\varepsilon_W^{n\,\gamma}\,\mathbb{E}\Big(\Big(\underset{\substack{(x,y)\neq (x',y')\in[0,D]^2, \\ |(x,y)^T-(x',y')^T|_2\leq \sqrt{2}\varepsilon_W}}{\sup}\,\frac{|W_1(x,y)-W_1(x',y')|}{|(x,y)^T-(x',y')^T|_2^\gamma}\Big)^n\Big)\\
&\leq 2^\frac{n\,\gamma}{2} \varepsilon_W^{n\,\gamma} \,\mathbb{E}\Big(\Big(\underset{(x,y)\neq (x',y')\in[0,D]^2}{\sup}\,\frac{|W_1(x,y)-W_1(x',y')|}{|(x,y)^T-(x',y')^T|_2^\gamma}\Big)^n\Big)\\
&\leq C(D) \varepsilon_W^{n\,\gamma},
\end{align*}
where we used Equation \eqref{EQ:MeanHoelderContGRFs} in the last step. Equivalently the error bound for $W_2$ follows.
\end{proof}

\begin{rem}\label{REM:ErrorGRFApprWeakNorm}
Note that Lemma~\ref{LE:StrongErrorBoundGRFApprox} immediately implies for $m\in[1,+\infty)$
\begin{align*}
\|W_1-W_1^\apprgrf\|_{L^n(\Omega;L^m([0,D]^2))}&=\mathbb{E}\Big(\big(\int_{[0,D]^2}|W_1(x,y)-W_1^\apprgrf(x,y)|^md(x,y)\big)^\frac{n}{m}\Big)^\frac{1}{n}\\
&\leq D^\frac{2}{m}\mathbb{E}\Big(\underset{(x,y)\in[0,D]^2}{\sup}\,|W_1(x,y)-W_1^\apprgrf(x,y)|^n\Big)^\frac{1}{n}\\
&=D^\frac{2}{m}\|W_1-W_1^\apprgrf\|_{L^n(\Omega;L^\infty([0,D]^2))}\leq C(D,m,n)\varepsilon_W^\gamma.
\end{align*}
\end{rem}
  
\subsection{Convergence to the modificated diffusion coefficient}\label{Subs:ConvModDiffCoeff}

Given some approximations $W_1^\apprgrf\approx W_1,~W_2^\apprgrf\approx W_2$ as in Lemma \ref{LE:StrongErrorBoundGRFApprox} and approximations $l_1^\apprlevy\approx l_1,~l_2^\apprlevy \approx	l_2$ as in Assumption \ref{ASS:CutProblemEigenvalues} \textit{v} as well as some fixed constants $K,A>0$, we approximate the diffusion coefficient $a_{K,A}$ in \eqref{EQ:DiffCoeffDefiCutUpperCut} by $a_{K,A}^{(\varepsilon_W,\varepsilon_l)}:\Omega\times\mathcal{D}\rightarrow(0,+\infty)$ with
\begin{align}\label{EQ:DiffCoeffApprDef}
a_{K,A}^{(\varepsilon_W,\varepsilon_l)}(x,y) = \chi_{A}\Big(\overline{a}(x,y) + \Phi_1(W_1^\apprgrf(x,y)) + \Phi_2(W_2^\apprgrf(\chi_K(l_1^\apprlevy(x)),\chi_K(l_2^\apprlevy(y))))\Big)
\end{align}
for $(x,y)\in\mathcal{D}$.
To prove a convergence result for this approximated coefficient (Theorem~\ref{TH:ErrorBoundCoeff}) we need the following two technical lemmas. The second can be proved by the use of \cite[Proposition 1.16]{LevyProcessesAndInfinitelyDivisibleDistributions}. For a detailed proof we refer to \cite{SubordGRFTheory}.
\begin{lemma}\label{LE:IneqOfNestedLpNorms}
For $n,m\in[1,+\infty)$ with $n\geq m$ and $\varphi\in L^n([0,D]^2\times\Omega;\mathbb{R},\lambda\otimes \mathbb{P}))$, where $\lambda$ denotes the Lebesgue measure on $(\mathbb{R}^2,\mathcal{B}(\mathbb{R}^2))$, it holds 
\begin{align*}
\|\varphi\|_{L^n(\Omega;L^m([0,D]^2))}\leq C(D,n,m) \|\varphi\|_{L^n([0,D]^2\times\Omega;\mathbb{R}))}.
\end{align*}
\end{lemma}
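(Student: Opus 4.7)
The plan is to exploit the assumption $n \geq m$ via Jensen's inequality applied to the inner spatial integral, then integrate in $\omega$ and invoke Fubini--Tonelli.

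First, I would rewrite the spatial integral as an expectation against the normalized Lebesgue measure on $[0,D]^2$: for each fixed $\omega \in \Omega$,
\begin{align*}
\int_{[0,D]^2} |\varphi(\omega,\underline{x})|^m \, d\underline{x} = D^2 \int_{[0,D]^2} |\varphi(\omega,\underline{x})|^m \frac{d\underline{x}}{D^2}.
\end{align*}
Because $n/m \geq 1$, the function $t \mapsto t^{n/m}$ is convex on $[0,\infty)$, so Jensen's inequality applied to the probability measure $d\underline{x}/D^2$ yields
\begin{align*}
\Bigl(\int_{[0,D]^2} |\varphi(\omega,\underline{x})|^m \frac{d\underline{x}}{D^2}\Bigr)^{n/m} \leq \int_{[0,D]^2} |\varphi(\omega,\underline{x})|^n \frac{d\underline{x}}{D^2}.
\end{align*}
Multiplying through by $D^{2n/m}$ produces
\begin{align*}
\Bigl(\int_{[0,D]^2} |\varphi(\omega,\underline{x})|^m d\underline{x}\Bigr)^{n/m} \leq D^{2(n/m - 1)} \int_{[0,D]^2} |\varphi(\omega,\underline{x})|^n d\underline{x}.
\end{align*}

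Next, I would take expectations on both sides. The left-hand side becomes $\|\varphi\|_{L^n(\Omega;L^m([0,D]^2))}^n$, while on the right the assumption $\varphi \in L^n([0,D]^2 \times \Omega)$ legitimately justifies Tonelli's theorem (the integrand is nonnegative), giving
\begin{align*}
\mathbb{E}\Bigl(\int_{[0,D]^2} |\varphi|^n d\underline{x}\Bigr) = \int_{[0,D]^2 \times \Omega} |\varphi|^n \, d(\lambda \otimes \mathbb{P}) = \|\varphi\|_{L^n([0,D]^2 \times \Omega)}^n.
\end{align*}
Taking the $n$-th root then yields the claim with explicit constant $C(D,n,m) = D^{2/m - 2/n}$, which collapses to $1$ when $n = m$ (consistent with Fubini).

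There is no substantive obstacle here: the statement is a standard embedding between mixed Lebesgue norms on a finite-measure space, and the only care needed is tracking the volume factor $D^{2(n/m-1)}$ and checking that Fubini--Tonelli is applicable — the latter being immediate from the hypothesis $\varphi \in L^n$ of the product space. The proof requires no input beyond Jensen's inequality and Tonelli's theorem, and in particular does not use any structure of $\varphi$ beyond its integrability.
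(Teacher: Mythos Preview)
Your proof is correct and essentially identical to the paper's: the paper applies H\"older's inequality to the inner spatial integral (pairing $|\varphi|^m$ against the constant function $1$ with exponents $n/m$ and $n/(n-m)$), whereas you phrase the same step as Jensen's inequality for $t\mapsto t^{n/m}$ against the normalized Lebesgue measure on $[0,D]^2$. Both routes produce the identical constant $C(D,n,m)=D^{2/m-2/n}$ and then finish by Fubini--Tonelli, so there is no substantive difference.
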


\begin{proof}
The case $n=m$ is trivial. For $n>m$ we use H\"older's inequality 
and obtain
\begin{align*}
\|\varphi\|_{L^n(\Omega;L^m([0,D]^2))}&= \Big(\int_\Omega \Big(\int_{[0,D]^2} |\varphi(x,y)|^m d(x,y) \Big)^\frac{n}{m} d\mathbb{P}\Big)^\frac{1}{n}\\
&\leq \Big(\int_\Omega \Big(\int_{[0,D]^2} |\varphi(x)|^n d(x,y) \Big)\,D^\frac{2(n-m)}{m} d\mathbb{P}\Big)^\frac{1}{n}\\
&=D^{\frac{2}{m}-\frac{2}{n}}\|\varphi\|_{L^n([0,D]^2\times\Omega;\mathbb{R}))}.
\end{align*}
\end{proof}

\begin{lemma}\label{LE:ExpValueIndepRVs}
	Let $W:\Omega\times \mathbb{R}_+^d\rightarrow \mathbb{R}$ be a $\mathbb{P}-a.s.$ continuous random field and let $Z:\Omega\rightarrow\mathbb{R}_+^d$ be a $\mathbb{R}_+^d$-valued random variable which is independent of the random field $W$. Further, let $\varphi:\mathbb{R}\rightarrow\mathbb{R}$ be a deterministic, continuous function. It holds
	\begin{align*}
	\mathbb{E}(\varphi(W(Z))=\mathbb{E}(\zeta(Z)),
	\end{align*}
	where $\zeta(z):=\mathbb{E}(\varphi(W(z))$ for deterministic $z\in \mathbb{R}_+^d$.
	\end{lemma}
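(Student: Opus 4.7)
The plan is to reduce the claim to a Fubini-type identity for independent random elements, using the cited result \cite[Prop.~1.16]{LevyProcessesAndInfinitelyDivisibleDistributions} which gives
\[
\E\bigl(h(X,Y)\bigr)=\int \E\bigl(h(x,Y)\bigr)\,P_X(dx)
\]
whenever $X,Y$ are independent and $h$ is jointly measurable (and $h(X,Y)$ is integrable or nonnegative). Here we want to play the role of $X$ with the random variable $Z$ and the role of $Y$ with the whole random field $W$, viewed as a random element of an appropriate measurable space.

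First I would set up joint measurability. Define
\[
F:\R_+^d\times\Omega\to\R,\qquad F(z,\omega):=\varphi\bigl(W(\omega,z)\bigr).
\]
For each fixed $\omega$ in the $\mathbb{P}$-full set where paths of $W$ are continuous, the map $z\mapsto F(z,\omega)$ is continuous (as the composition of the continuous function $\varphi$ with the continuous path $W(\omega,\cdot)$); for each fixed $z\in\R_+^d$, $\omega\mapsto F(z,\omega)$ is $\mathcal{F}$-measurable since it equals $\varphi\circ W(\cdot,z)$ and $W(\cdot,z)$ is a random variable. Hence $F$ is Carathéodory and therefore jointly $\mathcal{B}(\R_+^d)\otimes\mathcal{F}$-measurable, cf.\ \cite[Lemma 4.51]{InfiniteDimensionalAnalysis}. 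In particular, $\omega\mapsto F(Z(\omega),\omega)=\varphi(W(\omega,Z(\omega)))$ is $\mathcal{F}$-measurable.

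Next I would invoke the independence of $Z$ and $W$. Interpret $W$ as a random element of $C(\R_+^d)$ (equipped with its Borel $\sigma$-algebra generated by the topology of uniform convergence on compacts); by assumption, $\sigma(Z)$ and $\sigma(W)$ are independent, so the joint law of $(Z,W)$ is the product $P_Z\otimes P_W$. The functional $F$ factors as $F(z,\omega)=\widetilde F(z,W(\omega,\cdot))$ with $\widetilde F(z,w):=\varphi(w(z))$, which is jointly measurable on $\R_+^d\times C(\R_+^d)$. Applying \cite[Prop.~1.16]{LevyProcessesAndInfinitelyDivisibleDistributions} (i.e., Fubini combined with the product structure induced by independence) to $\widetilde F(Z,W)$ yields
\[
\E\bigl(\varphi(W(Z))\bigr)=\int_{\R_+^d}\E\bigl(\widetilde F(z,W)\bigr)\,P_Z(dz)=\int_{\R_+^d}\zeta(z)\,P_Z(dz)=\E\bigl(\zeta(Z)\bigr),
\]
since for each deterministic $z$, $\E(\widetilde F(z,W))=\E(\varphi(W(z)))=\zeta(z)$ by definition.

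The only genuine subtlety is the joint-measurability step together with the clean application of the product-measure theorem on the (non-Euclidean) product $\R_+^d\times C(\R_+^d)$; both are standard once one notes that $W$ has continuous paths, so measurability of $W$ as a map into $C(\R_+^d)$ follows from the measurability of its finite-dimensional evaluations. To keep the argument self-contained I would alternatively use the Carathéodory route above together with the regular conditional probability representation, observing that independence of $Z$ and $W$ gives $\E(F(Z,\cdot)\mid \sigma(W))(\omega)=\int F(z,\omega)\,P_Z(dz)$, so that taking expectations and applying Fubini on $(\R_+^d\times\Omega,\mathcal{B}(\R_+^d)\otimes\mathcal{F},P_Z\otimes\mathbb{P})$ closes the proof. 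Integrability (resp.\ nonnegativity) of $\varphi\circ W$ to justify Fubini is a mild additional hypothesis which will be guaranteed in every application within this paper by the boundedness introduced through the cut-offs $\chi_A,\chi_K$.
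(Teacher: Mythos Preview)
Your proposal is correct and follows exactly the route the paper indicates: the paper does not give a proof but simply points to \cite[Proposition~1.16]{LevyProcessesAndInfinitelyDivisibleDistributions} (and to \cite{SubordGRFTheory} for details), and your argument is precisely a clean unpacking of that reference, with the Carath\'eodory step supplying the joint measurability needed to apply the product-measure/Fubini identity for independent random elements. Your remark on the implicit integrability hypothesis is also appropriate and matches how the lemma is used in the paper.
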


\begin{theorem}\label{TH:ErrorBoundCoeff}
Let $W_1^\apprgrf\approx W_1,~W_2^\apprgrf\approx W_2$ be approximations of the GRFs on discrete grids as in Lemma \ref{LE:StrongErrorBoundGRFApprox}. Further, let $1\leq t\leq s<\eta -1$ and $0<\gamma<min(1,\beta/(2\alpha))$ such that $s\gamma\geq 2$. Under Assumption~\ref{ASS:CutProblemEigenvalues} we get the following error bound for the approximation of the diffusion coefficient:
\begin{align*}
\|a_{K,A}-a_{K,A}^{(\varepsilon_W,~\varepsilon_l)}\|_{L^s(\Omega;L^t([0,D]^2))}\leq  C(\varepsilon_W^\gamma + \varepsilon_l^\frac{1}{s}),
\end{align*}
with a constant $C$ which does not depend on the discretization parameters $\varepsilon_W$ and $\varepsilon_l$.
\end{theorem}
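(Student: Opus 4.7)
The plan is to use the 1-Lipschitz property of $\chi_A$ and the triangle inequality to split the pointwise error into a ``Gaussian'' part $E_1 := |\Phi_1(W_1) - \Phi_1(W_1^\apprgrf)|$ and a ``subordinated'' part $E_2$, then to estimate each in $L^s(\Omega\times \mathcal{D})$ (which upper-bounds $L^s(\Omega;L^t(\mathcal{D}))$ via Lemma~\ref{LE:IneqOfNestedLpNorms}, since $s\geq t$). Further I would split $E_2$ by introducing the intermediate quantity $W_2(\chi_K(l_1^\apprlevy),\chi_K(l_2^\apprlevy))$ and using the global Lipschitz constant $C_{lip}$ of $\Phi_2$, obtaining $E_2 \leq C_{lip}(E_{2a}+E_{2b})$ where $E_{2b}$ is the pure GRF-approximation error and $E_{2a}$ measures the sensitivity of $W_2$ to perturbations of its subordinator inputs.

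For $E_1$, I would apply the mean value theorem together with Assumption~\ref{ASS:CutProblemEigenvalues}(ii) to obtain
\[
E_1 \leq \phi \, \|W_1 - W_1^\apprgrf\|_{L^\infty(\mathcal{D})}\,\exp(2\psi\|W_1\|_{L^\infty(\mathcal{D})}),
\]
using that the piecewise bilinear interpolant satisfies $\|W_1^\apprgrf\|_\infty \leq \|W_1\|_\infty$. Cauchy--Schwarz in $\Omega$ decouples the two factors; all exponential moments of $\|W_1\|_{L^\infty(\mathcal{D})}$ are finite by Fernique's theorem for continuous Gaussian suprema, and Lemma~\ref{LE:StrongErrorBoundGRFApprox} gives the rate $\varepsilon_W^\gamma$ on the GRF factor. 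The term $E_{2b}$ is bounded pointwise by $\|W_2-W_2^\apprgrf\|_{L^\infty([0,K]^2)}$, yielding $\varepsilon_W^\gamma$ directly via Lemma~\ref{LE:StrongErrorBoundGRFApprox}. Both estimates are uniform in $(x,y)$, so the subsequent integration over $\mathcal{D}$ costs only a constant.

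The main work lies in $E_{2a}$. I would introduce the global H\"older seminorm
\[
H_{W_2}(\omega) := \sup_{z\neq z'\in[0,K]^2}\frac{|W_2(\omega,z)-W_2(\omega,z')|}{|z-z'|_2^\gamma},
\]
which has finite $L^n(\Omega)$-norm for every $n$ by \eqref{EQ:MeanHoelderContGRFs}, combine it with the 1-Lipschitz property of $\chi_K$ and the subadditivity $(a^2+b^2)^{\gamma/2}\leq a^\gamma+b^\gamma$ (valid for $\gamma\in(0,1]$), and obtain
\[
E_{2a}(x,y)\leq H_{W_2}\bigl(|l_1(x)-l_1^\apprlevy(x)|^\gamma + |l_2(y)-l_2^\apprlevy(y)|^\gamma\bigr).
\]
Since the L\'evy subordinators $l_j$ (and hence their approximations $l_j^\apprlevy$, constructed measurably from $l_j$) are independent of $W_2$, the $s$-th moment factorizes as
\[
\mathbb{E}(E_{2a}(x,y)^s)\leq C\,\mathbb{E}(H_{W_2}^s)\bigl(\mathbb{E}(|l_1(x)-l_1^\apprlevy(x)|^{s\gamma}) + \mathbb{E}(|l_2(y)-l_2^\apprlevy(y)|^{s\gamma})\bigr).
\]
The hypothesis $s\gamma\geq 2\geq 1$ combined with $s<\eta-1$ places the exponent $s\gamma$ in the admissible range $[1,\eta-1)$, so Assumption~\ref{ASS:CutProblemEigenvalues}(v) applied with exponent $s\gamma$ delivers $\mathbb{E}(|l_j-l_j^\apprlevy|^{s\gamma})\leq C_l\varepsilon_l$. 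Taking the $1/s$ power and assembling the three contributions gives the claimed bound $C(\varepsilon_W^\gamma+\varepsilon_l^{1/s})$.

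The hardest step will be $E_{2a}$, because one must couple the merely H\"older spatial regularity of $W_2$ with the only $L^{s\gamma}$-control available on the subordinator increments. The crucial structural input is the independence of $W_2$ from $(l_1,l_2)$, which lets the random H\"older seminorm $H_{W_2}$ be factored out \emph{before} the subordinator bound is invoked; applying plain H\"older instead would only produce the suboptimal rate $\varepsilon_l^{\gamma/s}$.
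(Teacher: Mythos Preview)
Your proposal is correct and follows the same overall decomposition as the paper's proof: the Lipschitz cut-off $\chi_A$, the split into the $\Phi_1$-term and the $\Phi_2$-term, and the further split of the latter into a GRF-approximation part and a subordinator-perturbation part. The treatment of $E_1$ and $E_{2b}$ matches the paper's $I_1$ and $I_4$ essentially verbatim.

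The only noteworthy difference is in the subordinator-perturbation term $E_{2a}$ (the paper's $I_3$). The paper first invokes Lemma~\ref{LE:ExpValueIndepRVs} to integrate out $W_2$ at frozen subordinator values, obtaining the deterministic function $\varphi(x,y,v,w)=\mathbb{E}(|W_2(x,y)-W_2(v,w)|^s)$, and then bounds $|(x,y)-(v,w)|_2^{s\gamma}\leq 2^{s\gamma/2-1}(|x-v|^{s\gamma}+|y-w|^{s\gamma})$ via convexity, which is precisely where the hypothesis $s\gamma\geq 2$ is used. You instead pull out the pathwise H\"older seminorm $H_{W_2}$, apply the subadditivity $(a^2+b^2)^{\gamma/2}\leq a^\gamma+b^\gamma$ (valid for all $\gamma\leq 1$), and factorize the expectation directly via independence. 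Your route is marginally more elementary in that it bypasses Lemma~\ref{LE:ExpValueIndepRVs}, and it reveals that the argument actually only needs $s\gamma\geq 1$ (to place the exponent in the admissible range for Assumption~\ref{ASS:CutProblemEigenvalues}~\textit{v}) rather than the stated $s\gamma\geq 2$. Both routes rely on the same structural ingredient you correctly highlight at the end: the independence of $W_2$ from $(l_1,l_2,l_1^\apprlevy,l_2^\apprlevy)$, without which one would be stuck with the suboptimal rate $\varepsilon_l^{\gamma/s}$.
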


\begin{proof} Since the cut function $\chi_{A}$ is Lipschitz continuous with Lipschitz constant $1$ we calculate
\begin{align*}
\|a_{K,A}&-a_{K,A}^{(\varepsilon_W,~\varepsilon_l)}\|_{L^s(\Omega;L^t([0,D]^2))}\\
&~\leq  \|\Phi_1(W_1) - \Phi_1(W_1^\apprgrf)\|_{L^s(\Omega;L^t([0,D]^2))} \\
&~+ \|\Phi_2(W_2(\chi_K(l_1),\chi_K(l_2)))-\Phi_2(W_2^\apprgrf(\chi_K(l_1^\apprlevy),\chi_K(l_2^\apprlevy)))\|_{L^s(\Omega;L^t([0,D]^2))}\\&=:I_1 + I_2
\end{align*}
First, we consider $I_1$ and use Assumption \ref{ASS:CutProblemEigenvalues} \textit{ii} and the same calculation as in Remark \ref{REM:ErrorGRFApprWeakNorm} to get
\begin{align*}
I_1&\leq D^\frac{2}{t}\|\Phi_1(W_1) - \Phi_1(W_1^\apprgrf)\|_{L^s(\Omega;L^\infty([0,D]^2))}.
\end{align*}
The mean value theorem yields, for fixed $(x,y)\in[0,D]^2$ and an appropriately chosen value $\xi\in(\min(W_1(x,y),W_1^\apprgrf(x,y)),\max(W_1(x,y),W_1^\apprgrf(x,y)))$,
\begin{align*}
|\Phi_1(W_1(x,y))-&\Phi_1(W_1^\apprgrf(x,y))|\\
&= |\Phi_1'(\xi)| \,|W_1(x,y)-W_1^\apprgrf(x,y)|\\
&\leq \phi\,\exp(\psi|\xi|)|W_1(x,y)-W_1^\apprgrf(x,y)|\\
&\leq \phi\,\max\{\exp(\psi|W_1(x,y)|),\exp(\psi|W_1^\apprgrf(x,y)|)\}\,|W_1(x,y)-W_1^\apprgrf(x,y)|,
\end{align*}
for $\mathbb{P}$-almost every $\omega\in \Omega$. As already mentioned in the proof of Lemma~\ref{LE:StrongErrorBoundGRFApprox}, for any $\omega\in\Omega$ and fixed $\cD_{ij}:=[x_i,x_{i+1}]\times[y_j,y_{j+1}]$ with $i,j\in\{1,...,M_{\varepsilon_W}^{(1)}\}$, it holds
\begin{align*}
&\underset{(x,y)\in \cD_{ij}}{\max} W_1^\apprgrf(x,y),~\underset{(x,y)\in \cD_{ij}}{\min} W_1^\apprgrf(x,y)\\
&\hspace{5cm}\in \{W_1(x_i,y_j),W_1(x_{i+1},y_j),W_1(x_i,y_{j+1}),W_1(x_{i+1},y_{j+1})\}.
\end{align*} 
Therefore, we obtain the pathwise estimate
\begin{align*}
&\|\Phi_1(W_1)-\Phi_1(W_1^\apprgrf)\|_{L^\infty([0,D]^2)}\\
&\leq \underset{(x,y)\in[0,D]^2}{\max}\,\phi\,\max\{\exp(\psi|W_1(x,y)|),\exp(\psi|W_1^\apprgrf(x,y)|)\}\,\underset{(x,y)\in[0,D]^2}{\max}\,|W_1(x,y)-W_1^\apprgrf(x,y)|\\
&\leq \phi\exp\big(\psi\, \max\{\underset{(x,y)\in[0,D]^2}{\max}|W_1(x,y)|,\underset{(x,y)\in[0,D]^2}{\max}|W_1^\apprgrf(x,y)|\} \big)\\
&\phantom{\big(\psi\, \max\{\underset{(x,y)\in[0,D]^2}{\max}|W_1(x,y)|,\underset{(x,y)\in[0,D]^2}{\max}|W_1^\apprgrf(x,y)|\} \big)}\times  \underset{(x,y)\in[0,D]^2}{\max}|W_1(x,y)-W_1^\apprgrf(x,y)|\\
&=\phi\,\exp\Big(\psi\, \underset{(x\mathcal,y)\in[0,D]^2}{\max}\,|W_1(x,y)|\Big) \underset{(x,y)\in[0,D]^2}{\max}\,|W_1(x,y)-W_1^\apprgrf(x,y)|.
\end{align*}
Finally, we obtain for any $n_1,n_2\in[1,+\infty)$ with $1/n_1 + 1/n_2 = 1$ by H\"older's inequality

\begin{align*}
I_1&\leq D^\frac{2}{t}\|\Phi_1(W_1) - \Phi_1(W_1^\apprgrf)\|_{L^s(\Omega;L^\infty([0,D]^2))}\\
&\leq D^\frac{2}{t}\phi\,\|\exp(\psi|W_1|)\|_{L^{sn_1}(\Omega;L^\infty([0,D]^2))}\,\|W_1-W_1^\apprgrf\|_{L^{sn_2}(\Omega;L^\infty([0,D]^2))}\\
&\leq C(D)\varepsilon_W^\gamma,
\end{align*}
where we used Lemma~\ref{LE:StrongErrorBoundGRFApprox} and the fact that $\|\exp(\psi|W_1|)\|_{L^{sn_1}(\Omega;L^\infty([0,D]^2))}<\infty$ (see~\cite[Theorem 2.1.1]{RandomFieldsAndGeometry} and the proof of Theorem~\ref{TH:QuantificationOfDiffApprLinftynorm} for more details).

For the second summand we calculate:
\begin{align*}
I_2&\leq \|\Phi_2(W_2(\chi_K(l_1),\chi_K(l_2)))-\Phi_2(W_2(\chi_K(l_1^\apprlevy),\chi_K(l_2^\apprlevy)))\|_{L^s(\Omega;L^t([0,D]^2))}\\
&+\|\Phi_2(W_2(\chi_K(l_1^\apprlevy),\chi_K(l_2^\apprlevy)))-\Phi_2(W_2^\apprgrf(\chi_K(l_1^\apprlevy),\chi_K(l_2^\apprlevy)))\|_{L^s(\Omega;L^t([0,D]^2))}\\
&=I_3 + I_4
\end{align*}
We use the same calculation as in Remark~\ref{REM:ErrorGRFApprWeakNorm} and the Lipschitz continuity of $\Phi_2$ to calculate for the summand $I_4$
\begin{align*}
I_4&\leq D^\frac{2}{t}\|\Phi_2(W_2(\chi_K(l_1^\apprlevy),\chi_K(l_2^\apprlevy)))-\Phi_2(W_2^\apprgrf(\chi_K(l_1^\apprlevy),\chi_K(l_2^\apprlevy)))\|_{L^s(\Omega;L^\infty([0,D]^2))}\\
&\leq C_{lip}D^\frac{2}{t}\|W_2(\chi_K(l_1^\apprlevy),\chi_K(l_2^\apprlevy))-W_2^\apprgrf(\chi_K(l_1^\apprlevy),\chi_K(l_2^\apprlevy))\|_{L^s(\Omega;L^\infty([0,D]^2))}\\
&\leq C_{lip}D^\frac{2}{t}\|W_2-W_2^\apprgrf\|_{L^s(\Omega;L^\infty([0,K]^2))}\\
&\leq C_{lip}D^\frac{2}{t}C(K,s)\varepsilon_W^\gamma,
\end{align*}
where we used Lemma~\ref{LE:StrongErrorBoundGRFApprox}.
It remains to bound the summand $I_3$: We estimate using Lemma~\ref{LE:IneqOfNestedLpNorms}
\begin{align*}
&I_3 = \|\Phi_2(W_2(\chi_K(l_1),\chi_K(l_2)))-\Phi_2(W_2(\chi_K(l_1^\apprlevy),\chi_K(l_2^\apprlevy)))\|_{L^s(\Omega;L^t([0,D]^2))}\\
&~\leq D^{\frac{2}{t}-\frac{2}{s}}\|\Phi_2(W_2(\chi_K(l_1),\chi_K(l_2)))-\Phi_2(W_2(\chi_K(l_1^\apprlevy),\chi_K(l_2^\apprlevy)))\|_{L^s([0,D]^2\times\Omega)}\\
&~= D^{\frac{2}{t}-\frac{2}{s}}\Big( \int_{[0,D]^2} \mathbb{E}\Big( | \Phi_2\big(W_2(\chi_K(l_1(x)),\chi_K(l_2(y)))\big)\\
&\hspace{5cm}-\Phi_2(W_2(\chi_K(l_1^\apprlevy(x)),\chi_K(l_2^\apprlevy(y)))) |^s \Big) d(x,y)\Big)^\frac{1}{s}\\
&~\leq C_{lip}D^{\frac{2}{t}-\frac{2}{s}}\Big( \int_{[0,D]^2} \mathbb{E}( | W_2(\chi_K(l_1(x)),\chi_K(l_2(y)))-W_2(\chi_K(l_1^\apprlevy(x)),\chi_K(l_2^\apprlevy(y))) |^s ) d(x,y)\Big)^\frac{1}{s}\\
\end{align*}
We know by Lemma~\ref{LE:ExpValueIndepRVs} that it holds for $(x,y)\in[0,D]^2$
\begin{align*}
\mathbb{E}( | W_2(\chi_K(l_1(x)),\chi_K(l_2(y)))&-W_2(\chi_K(l_1^\apprlevy(x)),\chi_K(l_2^\apprlevy(y))) |^s ) \\
&~= \mathbb{E}(\varphi(\chi_K(l_1(x)),\chi_K(l_2(y)),\chi_K(l_1^\apprlevy(x)),\chi_K(l_2^\apprlevy(y))))
\end{align*}
where 
\begin{align*}
\varphi(x,y,v,w):=\mathbb{E}(|W_2(x,y) - W_2(v,w)|^s).
\end{align*}
For $(x,y)=(v,w)$ it holds $\varphi(x,y,v,w)=0$ and for $(x,y)\neq (v,w)\in [0,K]^2$ we get
\begin{align*}
\varphi(x,y,v,w)&= |(x,y)^T - (v,w)^T|_2^{\gamma s}\,\mathbb{E}\Big(\frac{|W_2(x,y) - W_2(v,w)|^s}{|(x,y)^T-(v,w)^T|_2^{\gamma s}}\Big)\\
&\leq |(x,y)^T - (v,w)^T|_2^{\gamma s} \mathbb{E}\left(\Big(\underset{z\neq z'\in [0,K]^2}{\sup}\,\frac{|W_2(z)-W_2(z')|}{|z-z'|_2^\gamma}\Big)^s\right)\\
&\leq C(K)|(x,y)^T - (v,w)^T|_2^{\gamma s}
\end{align*}
by Equation~\eqref{EQ:MeanHoelderContGRFs}.
Further, we know from H\"older's inequality for $\gamma s\geq 2$ that it holds
\begin{align*}
|(x,y)^T - (v,w)^T|_2^{\gamma s} &= ((x-v)^2 + (y-w)^2)^\frac{\gamma s}{2}\\
&\leq 2^{\frac{\gamma s}{2}-1}(|x-v|^{\gamma s} + |y-w|^{\gamma s} )
\end{align*}
and therefore we calculate
\begin{align*}
&\mathbb{E}( | W_2(\chi_K(l_1(x)),\chi_K(l_2(y)))-W_2(\chi_K(l_1^\apprlevy(x)),\chi_K(l_2^\apprlevy(y))) |^s ) \\
&~\leq 2^{\frac{\gamma s}{2}-1}C(K) \mathbb{E} (|\chi_K(l_1(x)) - \chi_K(l_1^\apprlevy(x))|^{\gamma s} + |\chi_K(l_2(y))-\chi_K(l_2^\apprlevy(y))|^{\gamma s} )\\
&~\leq 2^{\frac{\gamma s}{2}-1}C(K) \mathbb{E} (|l_1(x) - l_1^\apprlevy(x)|^{\gamma s} + |l_2(y)-l_2^\apprlevy(y)|^{\gamma s} )\\
&~\leq  2^{\frac{\gamma s}{2}}C(K)C_l\varepsilon_l
\end{align*}
where we used the Lipschitz continuity of $\chi_K$
and Assumption~\ref{ASS:CutProblemEigenvalues} \textit{v} in the last step. Therefore, we finally obtain
\begin{align*}
I_3\leq C_{lip} D^\frac{2}{t} 2^{\frac{\gamma}{2}}C(K)^\frac{1}{s} C_l^\frac{1}{s} \varepsilon_l^\frac{1}{s}=:C(C_{lip},D,t,\gamma,s,K,C_l)\varepsilon_l^\frac{1}{s}
\end{align*}
which proves that
\begin{align*}
\|a_{K,A}-a_{K,A}^{(\varepsilon_W,~\varepsilon_l)}\|_{L^s(\Omega;L^t([0,D]^2))} \leq C(D,K,C_{lip},t,\gamma,s,C_l)(\varepsilon_W^\gamma + \varepsilon_l^\frac{1}{s}).
\end{align*}
\end{proof}

\section{Convergence analysis}\label{sec:convergence}
In this section we derive an error bound for the approximation of the solution. We split the error in two components: the first component is associated with the cut-off of the diffusion coefficient we described in Subsection \ref{Subs:diffusioncutoff}. The second error contributor corresponds to the approximation of the GRFs and the L\'evy subordinators we considered in Subsection \ref{Subs:ConvModDiffCoeff}.
 
 Let $r\in[1,q)$ with $q$ as in Assumption~\ref{ASS:CutProblemEigenvalues} \textit{iii} and denote by $u_K\in L^r(\Omega;V)$ the weak solution to problem~\eqref{EQ:EllProblemCut} - \eqref{EQ:DiffCoeffDefiCut}. Further, let $u_{K,A}^{(\varepsilon_W,\varepsilon_l)}\in L^r(\Omega;V)$ be the weak solution to the problem
   \begin{align}\label{EQ:EllProblemCutAppr}
-\nabla(a_{K,A}^{(\varepsilon_W,\varepsilon_l)}(\omega,\underline{x})\nabla u_{K,A}^{(\varepsilon_W,\varepsilon_l)}(\omega,\underline{x}))=f(\omega,\underline{x}) \text{ in }\Omega\times\mathcal{D},
	\end{align}
with boundary conditions
	\begin{align}
	u_{K,A}^{(\varepsilon_W,\varepsilon_l)}(\omega,\underline{x})&=0 \text{ on } \Omega\times \Gamma_1,\label{EQ:EllProblemBCDCutAppr}\\
	a_{K,A}^{(\varepsilon_W,\varepsilon_l)}(\omega,\underline{x}) \overrightarrow{n}\cdot\nabla u_{K,A}^{(\varepsilon_W,\varepsilon_l)}(\omega,x)&=g(\omega,x) \text{ on } \Omega\times \Gamma_2.\label{EQ:EllProblemBCNCutAppr}
	\end{align}
	
\noindent Note that Theorem~\ref{TH:ExistenceOfSolutionSubord} also applies to the elliptic problem with coefficient $a_{K,A}^{(\varepsilon_W,\varepsilon_l)}$.
	   The aim of this section is to quantify the error of the approximation $u_{K,A}^{(\varepsilon_W,\varepsilon_l)} \approx u_K$\footnote{The error of the approximation $u_{K} \approx u$ may be controlled as in Remark~\ref{rem:approxK} but cannot be quantified for the solution.}. By the triangle inequality we obtain
	   \begin{align}\label{EQ:ErrorSplit}
	   \|u_K-u_{K,A}^{(\varepsilon_W,\varepsilon_l)}\| \leq \|u_K-u_{K,A}\| + \|u_{K,A} -u_{K,A}^{(\varepsilon_W,\varepsilon_l)}\| =:E_1 + E_2
	   \end{align}
	   for an arbitrary norm $\|\cdot\|$ (to be specified later). Here, $u_{K,A}$ is the solution to the truncated problem~\eqref{EQ:EllProblemCutUpperCut} - \eqref{EQ:DiffCoeffDefiCutUpperCut}. We consider the two error contributions $E_1$ and $E_2$ separately.
	   
\subsection{Bound on \texorpdfstring{$\mathbf{E_1}$}{E1}}
\label{SUBSUBSEC:BoundOnE1}
   
	   \begin{assumption}\label{ASS:GRFsIndependent}
	   We assume that the GRFs $W_1$ and $W_2$ occurring in the diffusion coefficient \eqref{EQ:DiffCoeffDefi} are stochastically independent.
	   \end{assumption}
	   The aim of this subsection is to show that the first error contributor $E_1$ in Equation~\eqref{EQ:ErrorSplit} vanishes for increasing cut-off threshold $A$. The strategy consists of two separated steps: in the first step we show the stability of the solution, which means that the value $E_1$ can be controlled by the quality of the approximation of the diffusion coefficient $a_{K,A}\approx a_K$. In the second step, we show that the quality of the approximation of the diffusion coefficient can be controlled by the cut-off threshold $A$. The first step is given by Theorem~\ref{TH:ErrorCutProblemBoundedByCoeffError}. In order to prove it we need the following lemma.
	   
	   \begin{lemma}\label{Lemma:ErrSolVnorm}
	   For fixed cut-off levels $A$ and $K$ we consider the solution $u_K\in L^r(\Omega;V)$ and its approximation $u_{K,A}\in L^r(\Omega;V)$ for $r\in[1,q)$. It holds the pathwise estimate
	   \begin{align*}
	   \|u_K-u_{K,A}\|_V\leq a_{K,+}C(\overline{a}_-,\mathcal{D})\|\nabla u_K-\nabla u_{K,A}\|_{L^2(\mathcal{D})},
	   \end{align*}
	   for $\mathbb{P}$-almost every $\omega\in \Omega$. Here, the constant $C(\overline{a}_-,\mathcal{D})$ only depends on the indicated parameters and we define $a_{K,+}(\omega):=\max \{1, \underset{(x,y)\in\mathcal{D}}{\operatorname{ess}\,\sup}\,a_K(\omega,x,y)\}<\infty$ for $\omega\in \Omega$.

	   \end{lemma}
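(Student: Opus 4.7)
The lemma is essentially the Poincar\'e--Friedrichs inequality on $V$, dressed up with the factor $a_{K,+}\ge 1$ for convenient bookkeeping; the argument is purely deterministic and proceeds pointwise in $\omega$, so the strategy is to fix a good $\omega$, reduce to a standard functional-analytic inequality on $V$, and then reinsert the prefactor $a_{K,+}$ for free.

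First I fix $\omega\in\Omega$ on the full-measure set for which $u_K(\omega,\cdot),\,u_{K,A}(\omega,\cdot)\in V$, which is available by Theorem~\ref{TH:ExistenceOfSolutionSubord}. Both solutions have vanishing trace on $\Gamma_1$, hence so does their difference $w := u_K(\omega,\cdot) - u_{K,A}(\omega,\cdot)\in V$. Because $\mathcal{D}$ is a bounded connected Lipschitz domain and $\Gamma_1$ is a $(d-1)$-manifold of positive surface measure, the Poincar\'e--Friedrichs inequality on $V$ delivers a constant $C_P(\mathcal{D})>0$ such that $\|v\|_{L^2(\mathcal{D})}\le C_P(\mathcal{D})\|\nabla v\|_{L^2(\mathcal{D})}$ for every $v\in V$. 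Applying this to $w$ and using $\|w\|_V^2 = \|w\|_{L^2(\mathcal{D})}^2 + \|\nabla w\|_{L^2(\mathcal{D})}^2$ gives
\begin{equation*}
\|w\|_V \le \sqrt{1+C_P(\mathcal{D})^2}\,\|\nabla w\|_{L^2(\mathcal{D})}.
\end{equation*}

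To match the form stated in the lemma I set $C(\overline{a}_-,\mathcal{D}) := \sqrt{1+C_P(\mathcal{D})^2}$; its value in fact only depends on $\mathcal{D}$, but I keep the notation $C(\overline{a}_-,\mathcal{D})$ because this constant will be composed in the next step with the constants of Theorem~\ref{TH:ExistenceTheoremElliptic}, which genuinely depend on $\overline{a}_-$. Since $a_{K,+}(\omega)\ge 1$ by definition, multiplying the right-hand side by $a_{K,+}(\omega)$ only weakens the bound, producing the claim.

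The only point worth a sentence of care is the $\mathbb{P}$-a.s.\ finiteness of $a_{K,+}$, but this is already observed in the proof of Theorem~\ref{TH:ExistenceOfSolutionSubord}: $\overline{a}$ is bounded by hypothesis, $W_1(\omega,\cdot)$ is continuous on the compact domain $\mathcal{D}$, and $W_2(\omega,\cdot)$ is continuous on the compact square $[0,K]^2$, which contains the range of $(\chi_K(l_1(\omega,\cdot)),\chi_K(l_2(\omega,\cdot)))$. I do not expect any genuine obstacle here; the lemma is a one-line functional-analytic reduction whose sole purpose is to put the estimate into a shape that will chain cleanly with the forthcoming coercivity-based bound on $\|\nabla(u_K-u_{K,A})\|_{L^2(\mathcal{D})}$.
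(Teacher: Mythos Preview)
Your proof is correct, and in fact more direct than the paper's. You invoke the Poincar\'e--Friedrichs inequality on $V$ straight away, which immediately controls $\|w\|_{L^2(\mathcal{D})}$ by $\|\nabla w\|_{L^2(\mathcal{D})}$; the factor $a_{K,+}\ge 1$ is then attached for free.

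The paper instead runs a duality argument: it introduces an auxiliary problem $B_{a_K}(\hat w,v)=\langle u_K-u_{K,A},v\rangle_{L^2(\mathcal{D})}$, bounds $\|\hat w\|_V$ via Lax--Milgram, and then writes
\[
\|u_K-u_{K,A}\|_{L^2(\mathcal{D})}^2 = \int_{\mathcal{D}} a_K\,\nabla\hat w\cdot\nabla(u_K-u_{K,A})\,d\underline{x}
\le a_{K,+}\,\|\nabla\hat w\|_{L^2(\mathcal{D})}\,\|\nabla(u_K-u_{K,A})\|_{L^2(\mathcal{D})}.
\]
This yields the same inequality, but with $a_{K,+}$ and the dependence on $\overline a_-$ arising organically (through the coercivity constant in the Lax--Milgram bound for $\hat w$) rather than being inserted by hand. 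Your observation that the constant really only depends on $\mathcal{D}$, with the $\overline a_-$ dependence being cosmetic, is accurate and is a slight sharpening of what the paper states. Either route is fine; yours is the more elementary one.
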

	   
\begin{proof}
For a fixed $\omega\in \Omega$ we consider the variational problem: find a unique $\hat{w}\in V$ such that 
\begin{align*}
 B_{a_K}(\hat{w},v) = \langle u_K-u_{K,A}, v\rangle_{L^2(\mathcal{D})},
\end{align*}
for all $v\in V$.
By the Lax-Milgram theorem there exists a unique solution $\hat{w}\in V$ with 
\begin{align*}
\|\hat{w}\|_V \leq C'(\overline{a}_-,\mathcal{D})\|u_K-u_{K,A}\|_{L^2(\mathcal{D})},
\end{align*}
(see Theorem \ref{TH:ExistenceOfSolutionSubord} and \cite[Theorem 2.5]{AStudyOfElliptic}). Therefore, we obtain by H\"older's inequality
\begin{align*}
\|u_K-u_{K,A}\|_{L^2(\mathcal{D})}^2 &= B(\hat{w},u_K-u_{K,A}) \\
&= \langle a_K\nabla \hat{w}, \nabla u_K-\nabla u_{K,A}\rangle_{L^2(\mathcal{D})} \\
&\leq a_{K,+} \|\nabla \hat{w}\|_{L^2(\mathcal{D})} \|\nabla u_K-\nabla u_{K,A}\|_{L^2(D)}\\
&\leq \|u_K-u_{K,A}\|_{L^2(\mathcal{D})}a_{K,+}C'(\overline{a}_-,\mathcal{D})\|\nabla u_K-\nabla u_{K,A}\|_{L^2(\mathcal{D})}\\
&\leq \frac{1}{2}\|u_K-u_{K,A}\|_{L^2(\mathcal{D})}^2 + a_{K,+}^2C'(\overline{a}_-,\mathcal{D})^2/2 \|\nabla u_K-\nabla u_{K,A}\|_{L^2(\mathcal{D})}^2,
\end{align*}
where we used Young's inequality in the last step. Finally, we obtain
\begin{align*}\|u_K-u_{K,A}\|_V^2 &= \|u_K-u_{K,A}\|_{L^2(\mathcal{D})}^2 + \|\nabla u_K-\nabla u_{K,A}\|_{L^2(\mathcal{D})}^2 \\
&\leq (1+a_{K,+}^2C'(\overline{a}_-,\mathcal{D})^2) \|\nabla u_K-\nabla u_{K,A}\|_{L^2(\mathcal{D})}^2.
\end{align*}

\end{proof}	   
	   
	   \begin{theorem}\label{TH:ErrorCutProblemBoundedByCoeffError}
	   Let $f\in L^q(\Omega;H)$ and $g\in L^q(\Omega;L^2(\Gamma_2))$ for some $q\in[1,+\infty)$. Further, for a given number $t\in(1,+\infty)$ we define the dual number $n:=\frac{t}{t-1}$. Then, for any for $r\in[1,q/n)$, holds
	   \begin{align*}
\|u_K-u_{K,A}\|_{L^r(\Omega;V)}\leq C(\mathcal{D},\overline{a}_-,r)\,(\|f\|_{L^q(\Omega;H)} + \|g\|_{L^q(\Omega;\Gamma_2))})\,\mathbb{E}(\underset{\underline{x}\in\mathcal{D}}{\operatorname{ess}\,\sup}\,|a_K(\underline{x})-a_{K,A}(\underline{x})|^{rt})^{\frac{1}{rt}}
	   \end{align*}
	   \end{theorem}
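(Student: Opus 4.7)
The plan is to obtain a pathwise estimate of $\|u_K-u_{K,A}\|_V$ in terms of the $L^\infty$-error of the diffusion coefficient and the $V$-norm of $u_{K,A}$, then take moments and apply Hölder's inequality in $\Omega$, finishing by invoking the a~priori bound of Theorem~\ref{TH:ExistenceOfSolutionSubord} on $u_{K,A}$.

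First I would subtract the two weak formulations. Since $u_K$ and $u_{K,A}$ solve \eqref{EQ:WeakFormProblem} with the same functional $F_\omega$ but with coefficients $a_K$ and $a_{K,A}$ respectively, adding and subtracting $\int_\mathcal{D} a_K\nabla u_{K,A}\cdot\nabla v\,d\underline{x}$ yields, for every $v\in V$ and $\mathbb{P}$-a.e.\ $\omega$,
\begin{align*}
\int_\mathcal{D} a_K(\omega,\underline{x})\nabla(u_K-u_{K,A})\cdot\nabla v\,d\underline{x}
= \int_\mathcal{D}\bigl(a_{K,A}(\omega,\underline{x})-a_K(\omega,\underline{x})\bigr)\nabla u_{K,A}\cdot\nabla v\,d\underline{x}.
\end{align*}
Choosing $v=u_K-u_{K,A}\in V$, using the uniform lower bound $a_K\geq\overline{a}_-$ on the left and Cauchy--Schwarz together with the essential supremum bound on the right gives, after dividing out the common factor,
\begin{align*}
\|\nabla(u_K-u_{K,A})\|_{L^2(\mathcal{D})}\leq \overline{a}_-^{-1}\,\bigl\|a_K-a_{K,A}\bigr\|_{L^\infty(\mathcal{D})}\,\|\nabla u_{K,A}\|_{L^2(\mathcal{D})},
\end{align*}
pathwise. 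I then promote this gradient inequality to a full $V$-norm inequality via Lemma~\ref{Lemma:ErrSolVnorm} (or, equivalently, the Friedrichs/Poincar\'e inequality available on $V$ thanks to the homogeneous Dirichlet condition on $\Gamma_1$), which keeps the constant purely deterministic of the form $C(\mathcal{D},\overline{a}_-)$.

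Next I take the $L^r(\Omega)$ norm of the resulting pathwise bound and apply H\"older's inequality in $\Omega$ to the product of the two random factors $\|a_K-a_{K,A}\|_{L^\infty(\mathcal{D})}$ and $\|\nabla u_{K,A}\|_{L^2(\mathcal{D})}$. Using the conjugate pair $(t,n)$ with $n=t/(t-1)$ gives
\begin{align*}
\|u_K-u_{K,A}\|_{L^r(\Omega;V)}\leq C(\mathcal{D},\overline{a}_-)\,\mathbb{E}\bigl(\|a_K-a_{K,A}\|_{L^\infty(\mathcal{D})}^{rt}\bigr)^{1/(rt)}\,\|\nabla u_{K,A}\|_{L^{rn}(\Omega;L^2(\mathcal{D}))}.
\end{align*}
Since by hypothesis $r\in[1,q/n)$, the exponent $rn$ is strictly less than $q$, so Theorem~\ref{TH:ExistenceOfSolutionSubord} applied to the problem for $u_{K,A}$ (which also enjoys the lower bound $a_{K,A}\geq \overline{a}_-$) yields
\begin{align*}
\|\nabla u_{K,A}\|_{L^{rn}(\Omega;L^2(\mathcal{D}))}\leq \|u_{K,A}\|_{L^{rn}(\Omega;V)}\leq C(\overline{a}_-,\mathcal{D})\bigl(\|f\|_{L^q(\Omega;H)}+\|g\|_{L^q(\Omega;L^2(\Gamma_2))}\bigr).
\end{align*}
Combining the two displays gives exactly the claim, with a constant $C(\mathcal{D},\overline{a}_-,r)$ that collects the (deterministic) constants from Lemma~\ref{Lemma:ErrSolVnorm}, Theorem~\ref{TH:ExistenceOfSolutionSubord} and the H\"older step.

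The main delicate point is the passage from the gradient estimate to the $V$-norm estimate: if one applies Lemma~\ref{Lemma:ErrSolVnorm} literally one picks up the random factor $a_{K,+}$, and a na\"ive triple H\"older would then force the tighter constraint $r<q/(2n)$ instead of $r<q/n$. The crucial observation is that $a_{K,+}$ need not be separated off as an independent random variable since the ratio $\|u_K-u_{K,A}\|_V / \|\nabla(u_K-u_{K,A})\|_{L^2}$ can be controlled by a deterministic constant depending only on $\mathcal{D}$ (via Friedrichs) - this is what allows the clean two-factor H\"older split and matches the exponent range $r\in[1,q/n)$ in the statement.
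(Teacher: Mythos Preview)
Your argument is correct, and it is in fact tidier than the paper's. The paper also starts from the same subtracted weak formulation and the same pathwise gradient bound, but it then passes to the full $V$-norm via the duality estimate of Lemma~\ref{Lemma:ErrSolVnorm}, which introduces the random factor $a_{K,+}$. To handle the resulting product $a_{K,+}\|u_K\|_V$ the paper performs a second H\"older step in $\Omega$ (choosing $\rho>1$ with $nr\rho<q$) and then invokes Theorem~\ref{TH:QuantificationOfDiffApprLinftynorm} to guarantee $\mathbb{E}(a_{K,+}^{nr\rho'})<\infty$. This still recovers the full range $r\in[1,q/n)$, so your warning about a ``na\"ive triple H\"older'' forcing $r<q/(2n)$ does not describe what the paper actually does; but it does describe what one would be stuck with if the moment bound on $a_{K,+}$ were unavailable.

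By contrast, you observe that on $V$ the Friedrichs/Poincar\'e inequality already gives a purely deterministic equivalence between $\|\cdot\|_V$ and $\|\nabla\cdot\|_{L^2(\mathcal{D})}$, so the $a_{K,+}$ factor never appears and a single H\"older split suffices. This makes the proof self-contained (no forward reference to Theorem~\ref{TH:QuantificationOfDiffApprLinftynorm}) and shows more transparently why the constant depends only on $\mathcal{D}$, $\overline{a}_-$ and $r$. A minor cosmetic difference is that you bound against $\|\nabla u_{K,A}\|_{L^2(\mathcal{D})}$ whereas the paper bounds against $\|u_K\|_V$; the two are symmetric choices and either one feeds into Theorem~\ref{TH:ExistenceOfSolutionSubord} in the same way.
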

	   
	   \begin{proof}
	By a direct calculation we obtain
	\begin{align*}
	\|\nabla u_K-\nabla u_{K,A}\|_{L^2(\mathcal{D})}^2&\leq \frac{1}{\overline{a}_-}\int_\mathcal{D}a_{K,A}|\nabla u_{K} - \nabla u_{K,A}|_2^2 d(x,y).
	\end{align*}
Since $u_K$ and $u_{K,A}$ are weak solutions of problem~\eqref{EQ:EllProblemCut} - \eqref{EQ:DiffCoeffDefiCut} resp.~\eqref{EQ:EllProblemCutUpperCut} - \eqref{EQ:DiffCoeffDefiCutUpperCut} it holds
\begin{align*}
\int_\mathcal{D}a_{K,A}\nabla u_{K,A}\cdot \nabla u_K d(x,y)&= \int_\mathcal{D}a_K|\nabla u_K|_2^2d(x,y), \\
~ \int_\mathcal{D}a_{K,A}|\nabla u_{K,A}|_2^2d(x,y) &= \int_\mathcal{D}a_K\nabla u_K \cdot \nabla u_{K,A} d(x,y),
\end{align*}
$\mathbb{P}$-almost surely and therefore 
\begin{align*}
\int_\mathcal{D}a_{K,A}|\nabla u_K-\nabla u_{K,A}|_2^2d(x,y) = \int_\mathcal{D}(a_{K,A} - a_K)\nabla u_K (\nabla u_K-\nabla u_{K,A})d(x,y).
\end{align*}
We estimate using H\"older's inequality
\begin{align*}
\|\nabla u_K-\nabla u_{K,A}\|_{L^2(\mathcal{D})}^2&\leq \frac{1}{\overline{a}_-} \|a_K-a_{K,A}\|_{L^\infty(\mathcal{D})}\|\nabla u_K\|_{L^2(\mathcal{D})} \|\nabla u_K - \nabla u_{K,A}\|_{L^2(\mathcal{D})}\\
&\leq  \frac{1}{\overline{a}_-} \|a_K-a_{K,A}\|_{L^\infty(\mathcal{D})} \|u_K\|_V\|\nabla u_K-\nabla u_{K,A}\|_{L^2(\mathcal{D})}
\end{align*}
and therefore we obtain
\begin{align*}
\|\nabla u_K-\nabla u_{K,A}\|_{L^2(\mathcal{D})}\leq \frac{1}{\overline{a}_-} \|a_K-a_{K,A}\|_{L^\infty(\mathcal{D})} \|u_K\|_V.
\end{align*}
Using Lemma \ref{Lemma:ErrSolVnorm} we obtain the pathwise estimate
\begin{align*}
\|u_K-u_{K,A}\|_V\leq C(\overline{a}_-,\mathcal{D})a_{K,+}\|a_K-a_{K,A}\|_{L^\infty(\mathcal{D})} \|u_K\|_V.
\end{align*}
Using again H\"older's inequality we have 
\begin{align*}
\|u_K-u_{K,A}\|_{L^r(\Omega;V)}\leq C(\overline{a}_-,\mathcal{D}) \mathbb{E}(\underset{\underline{x}\in\mathcal{D}}{\operatorname{ess}\,\sup}\,|a_K(\underline{x})-a_{K,A}(\underline{x})|^{rt})^{\frac{1}{rt}}\mathbb{E}(a_{K,+}^{nr}\|u_K\|_{V}^{nr})^\frac{1}{nr}.
\end{align*}
By assumption it holds $nr<q$. Therefore, we can choose a real number $\rho>1$ such that $nr\rho<q$. We define the dual number $\rho':=\frac{\rho}{\rho-1}\in (1,+\infty)$ and use H\"older's inequality to calculate

\begin{align*}
\|u_K-u_{K,A}\|_{L^r(\Omega;V)}\leq C(\overline{a}_-,\mathcal{D}) \mathbb{E}(\underset{\underline{x}\in\mathcal{D}}{\operatorname{ess}\,\sup}\,|a_K(\underline{x})-a_{K,A}(\underline{x})|^{rt})^{\frac{1}{rt}}\mathbb{E}(a_{K,+}^{nr\rho'})^{\frac{1}{nr\rho'}}\|u_K\|_{L^{nr\rho}(\Omega;V)}.
\end{align*}

Obviously Theorem~\ref{TH:ExistenceOfSolutionSubord} applies also to problem~\eqref{EQ:EllProblemCut} - \eqref{EQ:DiffCoeffDefiCut}. Therefore, since $nr\rho<q$ by assumption we conclude 
\begin{align*}
\|u_K-u_{K,A}\|_{L^r(\Omega;V)}\leq C(\mathcal{D},\overline{a}_-,r)\mathbb{E}(\underset{\underline{x}\in\mathcal{D}}{\operatorname{ess}\,\sup}\,|a_K(\underline{x})-a_{K,A}(\underline{x})|^{rt})^{\frac{1}{rt}}(\|f\|_{L^q(\Omega;H)} + \|g\|_{L^q(\Omega;\Gamma_2))}),
\end{align*}
where we additionally used the fact that $\mathbb{E}(a_{K,+}^{nr\rho'})^{\frac{1}{nr\rho'}}<+\infty$ (see Theorem \ref{TH:QuantificationOfDiffApprLinftynorm}). 
\end{proof}
	   
In other words, finding a bound for the error contribution $E_1$ in Equation~\eqref{EQ:ErrorSplit} reduces to quantifying the quality of the approximation of the diffusion coefficient $a_{K,A}\approx a_K$. For readability the proof of the following theorem can be found in Appendix~\ref{appendix:proof}

\begin{theorem}\label{TH:QuantificationOfDiffApprLinftynorm}
For any $n\in(1,+\infty)$ it holds

\begin{align*}
\mathbb{E}(\underset{\underline{x}\in\mathcal{D}}{\operatorname{ess}\,\sup}\, a_K(\underline{x})^n)^\frac{1}{n}<+\infty.
\end{align*}
Further, for any $\delta>0$ there exists a constant $A=A(\delta,n)>0$ such that 
	   \begin{align*}
	   \mathbb{E}(\underset{\underline{x}\in\mathcal{D}}{\operatorname{ess}\,\sup}\,|a_K(\underline{x})-a_{K,A}(\underline{x})|^{n})^{1/n}<\delta.
	   \end{align*}
\end{theorem}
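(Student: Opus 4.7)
The plan is to split the analysis into two parts: first, establish finiteness of all moments of $\operatorname{ess\,sup}_{\mathcal{D}} a_K$ by controlling the three summands in the definition of $a_K$ separately and invoking the exponential integrability of suprema of centred Gaussian fields; second, deduce the approximation statement by dominated convergence. I would start from the pathwise bound
\[
\operatorname{ess\,sup}_{\underline{x} \in \mathcal{D}} a_K(\underline{x}) \leq \overline{a}_+ + \sup_{\underline{x} \in \mathcal{D}} \Phi_1(W_1(\underline{x})) + \sup_{(u,v) \in [0,K]^2} \Phi_2(W_2(u,v)),
\]
which holds $\mathbb{P}$-almost surely; the last term uses that $(\chi_K(l_1(x)), \chi_K(l_2(y)))$ takes values in $[0,K]^2$, so the subordinated field is pathwise dominated by the sup of $W_2$ over the compact box.

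To control the $\Phi_j$-terms I would use Assumption~\ref{ASS:CutProblemEigenvalues}~\textit{ii}: integrating $|\Phi_1'(x)| \leq \phi\, e^{\psi|x|}$ once gives $\Phi_1(x) \leq C_1 \exp(\psi|x|)$ for some $C_1 > 0$, and the Lipschitz property of $\Phi_2$ gives $\Phi_2(x) \leq |\Phi_2(0)| + C_{lip}|x|$. It then suffices to bound $\|\exp(\psi \sup_{\mathcal{D}}|W_1|)\|_{L^n(\Omega;\mathbb{R})}$ and $\|\sup_{[0,K]^2}|W_2|\|_{L^n(\Omega;\mathbb{R})}$. Both follow from the Borell-TIS/Fernique inequality for centred Gaussian fields with $\mathbb{P}$-a.s.\ continuous paths on a compact set (see \cite[Theorem 2.1.1]{RandomFieldsAndGeometry}), which yields $\mathbb{E}\exp(c \sup |W_j|) < +\infty$ for \emph{every} $c > 0$, hence in particular every polynomial moment is finite. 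The triangle inequality in $L^n(\Omega;\mathbb{R})$ then gives the first claim.

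For the second claim I would use the elementary identity $z - \chi_A(z) = (z - A)^+$ valid for every $z \geq 0$, which produces the pathwise equality
\[
\operatorname{ess\,sup}_{\underline{x} \in \mathcal{D}} |a_K(\underline{x}) - a_{K,A}(\underline{x})| = (M_K - A)^+, \qquad M_K := \operatorname{ess\,sup}_{\underline{x} \in \mathcal{D}} a_K(\underline{x}).
\]
By the first part $M_K \in L^n(\Omega;\mathbb{R})$, so $(M_K - A)^+$ is dominated by $M_K$ and tends to zero $\mathbb{P}$-a.s.\ as $A \to \infty$. The dominated convergence theorem yields $\mathbb{E}((M_K - A)^+)^n \to 0$, and choosing $A$ sufficiently large provides the claimed $\delta$-bound.

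The main technical obstacle is the exponential integrability of $\sup_{\mathcal{D}}|W_1|$, supplied by Borell-TIS; once this is granted, the rest of the argument (integration of the growth bound on $\Phi_1'$, Lipschitz bound for $\Phi_2$, and dominated convergence) is routine. Note that the independence Assumption~\ref{ASS:GRFsIndependent} is not required for this particular statement and enters only later in the analysis of the solution error $E_1$.
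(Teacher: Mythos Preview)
Your argument is correct. For the first claim your route is essentially the paper's: decompose $a_K$ into $\overline{a}$, $\Phi_1(W_1)$, and $\Phi_2(W_2)$ restricted to $[0,K]^2$, turn the growth bounds on $\Phi_1',\Phi_2$ into $\Phi_1(x)\leq C_1e^{\psi|x|}$ and $\Phi_2(x)\leq C_2(1+|x|)$, and invoke Borell--TIS on the bounded Gaussian suprema. The paper carries out the moment bound via the layer-cake formula and the explicit Gaussian tail, whereas you quote exponential integrability directly; these are equivalent.

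For the second claim you take a genuinely different and shorter route. You observe the pathwise identity $\operatorname{ess\,sup}_{\mathcal{D}}|a_K-a_{K,A}|=(M_K-A)^+$ with $M_K=\operatorname{ess\,sup}_{\mathcal{D}}a_K$, dominate by $M_K\in L^n$, and apply dominated convergence. The paper instead argues quantitatively: it bounds $\mathbb{E}(\|a_K-a_{K,A}\|_{L^\infty}^n)$ by $\mathbb{E}(\|a_K\|_{L^\infty}^{2n})^{1/2}\,\mathbb{P}(\|a_K\|_{L^\infty}\geq A)^{1/2}$ via H\"older, and then controls the tail probability using the independence of $W_1$ and $W_2$ (Assumption~\ref{ASS:GRFsIndependent}) to factor it into two Borell--TIS tails, yielding an explicit formula for $A(\delta,n)$. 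Your approach is cleaner and, as you note, does not require the independence assumption at this stage; the price is that it is non-constructive, giving only the existence of $A(\delta,n)$ rather than an explicit expression.
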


From Theorem~\ref{TH:ErrorCutProblemBoundedByCoeffError} together with Theorem~\ref{TH:QuantificationOfDiffApprLinftynorm} we obtain
	   \begin{align*}
	   \|u_K-u_{K,A}\|_{L^r(\Omega;V)} \rightarrow 0 \text{, for } A\rightarrow\infty,
	   \end{align*}
for every $r<q$.
	   
\subsection{Bound on \texorpdfstring{$\mathbf{E_2}$}{E2}} The aim is to bound the second term of Equation~\eqref{EQ:ErrorSplit} given by 
	  \begin{align*}
	  E_2=\|u_{K,A} - u_{K,A}^{(\varepsilon_W,\varepsilon_l)}\|
	  \end{align*}
in an appropriate norm. For technical reasons we have to impose an additional assumption on the solution of the truncated problem. The subsequent remarks discuss situations under which this assumption is fulfilled.
\begin{assumption}\label{ASS:IntegrabilityOfSolGradient}
We assume that there exist constants $j_{reg}>0$ and $k_{reg}\geq 2$ such that
\begin{align}\label{EQ:SolRegAssumption}
C_{reg}:=\mathbb{E}(\|\nabla u_{K,A}\|_{L^{2 + j_{reg}}(\mathcal{D})}^{k_{reg}})< +\infty. 
\end{align}
\end{assumption}

Since $u_{K,A}\in H^1(\mathcal{D})$ we already know that $\nabla u_{K,A}\in L^2(\mathcal{D})$. Assumption~\ref{ASS:IntegrabilityOfSolGradient} requires a slightly higher integrability over the spatial domain. Since this is an assumption on the  regularity of the solution $u_{K,A}$ we denote the above constant by $C_{reg}$.

\begin{rem}\label{REM:OnHigherIntAssumpSol}
Note that Assumption~\ref{ASS:IntegrabilityOfSolGradient} is fulfilled if there exists $\theta\in(0,1)$ such that
\begin{align}\label{EQ:AlternativeAssumptionOnSolRef}
\|u_{K,A}\|_{H^{1+\theta}(\mathcal{D})}\leq C(f,a)
\end{align}
with some constant $C(f,a)$ with $\mathbb{E}(C(f,a)^{k_{reg}})<\infty$. This is true since for any $\rho\geq 2$  and an arbitrary function $\varphi\in H^{1+\theta}(\mathcal{D})$ the inequality
\begin{align*}
\|\nabla \varphi\|_{L^\rho(\mathcal{D})} \leq C\|\nabla \varphi\|_{H^{1-\frac{2}{\rho}(\mathcal{D})}(\mathcal{D})}\leq C\|\varphi\|_{H^{2-\frac{2}{\rho}}(\mathcal{D})}
\end{align*}
holds for $\theta:=1-\frac{2}{\rho}$ (see \cite[Theorem 6.7]{HitchhikersGuideToTheFractionalSobolevSpaces}). Here, the constant $C=C(\mathcal{D}, \theta)$ depends only on the indicated parameters. Hence, the condition \eqref{EQ:AlternativeAssumptionOnSolRef} implies Equation \eqref{EQ:SolRegAssumption} with $j_{reg}=\frac{2\theta}{1-\theta}$.
\end{rem}

\begin{rem}\label{REM:OnDiffCutoff}
	Note that there are several results about higher integrability of the gradient of the solution to an elliptic PDE of the form~\eqref{EQ:EllProblemCutUpperCut} - \eqref{EQ:DiffCoeffDefiCutUpperCut}. For instance \cite{RegularityResultsForLaplaceInterfaceProblemsInTroDimensions} yields that the solution $u_{K,A}$ has $H^{1+\delta/(2\pi)}$ regularity with $\delta=min(1,\overline{a}_-,A^{-1})$ under mixed boundary conditions and under the assumption that $a_{K,A}$ is piecewise constant (see~\cite[Theorem 7.3]{RegularityResultsForLaplaceInterfaceProblemsInTroDimensions}). This corresponds to the case where no Gaussian noise is considered (i.e. $\Phi_1\equiv 0$) and $\overline{a}$ is constant. Another important result is given in \cite{HigherIntegrabilityOfTheGradientInDegenerateEllipticEquations}. It follows by \cite[Theorem 1]{HigherIntegrabilityOfTheGradientInDegenerateEllipticEquations} that under the assumption that there exists $q>2$ with $f\in L^q(\mathcal{D})$ $\mathbb{P}-a.s.$ there exists a constant $C=C(\mathcal{D},\|f\|_{L^q(\mathcal{D})},\overline{a}_-,A)$ and a positive number $\vartheta=\vartheta(\mathcal{D},\|f\|_{L^q(\mathcal{D})},\overline{a}_-,A)>0$ only depending on the indicated parameters, such that:
	\begin{align}\label{EQ:SolRegularitySpecialCase}
	\| \nabla u_{K,A}\|_{L^{2+\vartheta}(\mathcal{D})}\leq C.
	\end{align}
	In particular, if the right hand side $f$ of the problem is deterministic, then $\vartheta$ and the constant $C$ in \eqref{EQ:SolRegularitySpecialCase} are deterministic and one immediately obtains 
	\begin{align*}
	\mathbb{E}(\| \nabla u_{K,A}\|_{L^{2+\vartheta}(\mathcal{D})}^{k_{reg}})<+\infty,
	\end{align*}
	for any $k_{reg}\geq 1$ and a deterministic, positive constant $\vartheta>0$.  We note that although \cite[Theorem 1]{HigherIntegrabilityOfTheGradientInDegenerateEllipticEquations} suggests a dependence of the parameter $\vartheta=\vartheta(\mathcal{D},\|f\|_{L^q(\mathcal{D})},\overline{a}_-,A)$ on the constant $A$, this dependence is not numerically detectable for our diffusion coefficient, as numerical experiments show. Of course, it depends on the other parameters $\mathcal{D},\|f\|_{L^q(\mathcal{D})}$ and $\overline{a}_-$.
	\end{rem}

Next, we show that for a given approximation of the diffusion coefficient, the resulting error contributor $E_2$ is bounded by the approximation error of the diffusion coefficient.
Similar to the corresponding assertion we gave in Subsection \ref{SUBSUBSEC:BoundOnE1} we need the following Lemma for the proof of this error bound. For a proof we refer to Lemma \ref{Lemma:ErrSolVnorm}.
 \begin{lemma}\label{Lemma:ErrSolVnormE2}
	   For fixed cut-off levels $A$, $K$ and fixed approximation parameters $\varepsilon_W,\varepsilon_l$ we consider the PDE solutions $u_{A,K}\in L^r(\Omega;V)$ and  $u_{K,A}^{(\varepsilon_W,\varepsilon_l)}\in L^r(\Omega;V)$ for $r\in[1,q)$. It holds the pathwise estimate
	   \begin{align*}
	   \|u_{K,A}-u_{K,A}^{(\varepsilon_W,\varepsilon_l)}\|_V\leq a_{K,+}C(\overline{a}_-,\mathcal{D})\|\nabla u_{K,A}-\nabla u_{K,A}^{(\varepsilon_W,\varepsilon_l)}\|_{L^2(\mathcal{D})},
	   \end{align*}
	   for $\mathbb{P}$-almost every $\omega\in \Omega$. Here, constant $C(\overline{a}_-,\mathcal{D})$ depends only on the indicated parameters and $a_{K,+}(\omega):=\max \{1, \underset{(x,y)\in\mathcal{D}}{\operatorname{ess}\,\sup}\,a_K(\omega,x,y)\}<\infty$ for $\omega\in \Omega$.

	   \end{lemma}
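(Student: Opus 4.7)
The plan is to carry out essentially the same duality (Aubin--Nitsche style) argument as in Lemma~\ref{Lemma:ErrSolVnorm}, merely replacing the coefficient $a_K$ by $a_{K,A}$ and the solutions $(u_K,u_{K,A})$ by $(u_{K,A},u_{K,A}^{(\varepsilon_W,\varepsilon_l)})$. The key fact that makes this drop-in replacement possible is that $a_{K,A}=\chi_A(a_K)\leq a_K$ pointwise, so any $L^\infty$-bound on $a_{K,A}$ is dominated by $a_{K,+}$, which is exactly the constant that appears in the claimed inequality.

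Fix $\omega\in\Omega$ with $\overline{a}_-\leq a_{K,A}(\omega,\cdot)\leq A$. First I would set up the auxiliary variational problem: find $\hat{w}\in V$ such that
\begin{align*}
B_{a_{K,A}}(\hat{w},v)=\langle u_{K,A}-u_{K,A}^{(\varepsilon_W,\varepsilon_l)},v\rangle_{L^2(\mathcal{D})}\qquad\text{for all }v\in V.
\end{align*}
Because $a_{K,A}\geq\overline{a}_->0$, the bilinear form $B_{a_{K,A}}$ is coercive on $V$ and the Lax--Milgram theorem yields a unique solution $\hat{w}$ satisfying the a priori bound $\|\hat{w}\|_V\leq C'(\overline{a}_-,\mathcal{D})\,\|u_{K,A}-u_{K,A}^{(\varepsilon_W,\varepsilon_l)}\|_{L^2(\mathcal{D})}$, with a constant depending only on $\overline{a}_-$ and the Poincaré/trace constants on $\mathcal{D}$.

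Next I would test this dual problem with $v=u_{K,A}-u_{K,A}^{(\varepsilon_W,\varepsilon_l)}\in V$, expand $B_{a_{K,A}}(\hat{w},v)$ as an integral, and apply Hölder together with $\|a_{K,A}\|_{L^\infty(\mathcal{D})}\leq a_{K,+}(\omega)$ to obtain
\begin{align*}
\|u_{K,A}-u_{K,A}^{(\varepsilon_W,\varepsilon_l)}\|_{L^2(\mathcal{D})}^2\leq a_{K,+}\,\|\nabla\hat{w}\|_{L^2(\mathcal{D})}\,\|\nabla u_{K,A}-\nabla u_{K,A}^{(\varepsilon_W,\varepsilon_l)}\|_{L^2(\mathcal{D})}.
\end{align*}
Inserting the Lax--Milgram estimate for $\|\hat{w}\|_V$ (which dominates $\|\nabla\hat{w}\|_{L^2}$) and dividing by the common factor $\|u_{K,A}-u_{K,A}^{(\varepsilon_W,\varepsilon_l)}\|_{L^2(\mathcal{D})}$ converts this into a pure bound of the $L^2$-norm of the difference in terms of the $L^2$-norm of its gradient, with multiplicative constant $a_{K,+}C'(\overline{a}_-,\mathcal{D})$. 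Finally, squaring and adding $\|\nabla(u_{K,A}-u_{K,A}^{(\varepsilon_W,\varepsilon_l)})\|_{L^2(\mathcal{D})}^2$ to both sides gives the full $V$-norm on the left, and using $a_{K,+}\geq 1$ to absorb the extra ``$+1$'' into $a_{K,+}^2$ yields the claim with $C(\overline{a}_-,\mathcal{D}):=\sqrt{C'(\overline{a}_-,\mathcal{D})^2+1}$.

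There is no substantive obstacle here: the argument is structurally identical to Lemma~\ref{Lemma:ErrSolVnorm}, and the only point worth checking is that one uses the bilinear form $B_{a_{K,A}}$ (not $B_{a_{K,A}^{(\varepsilon_W,\varepsilon_l)}}$) in the dual problem, so that the $L^\infty$-bound on the coefficient can be controlled by $a_{K,+}$ rather than by the approximated coefficient, keeping the estimate in the same form as the earlier lemma and ensuring that the same moment bounds from Theorem~\ref{TH:QuantificationOfDiffApprLinftynorm} will later be applicable when $E_2$ is estimated in $L^r(\Omega;V)$.
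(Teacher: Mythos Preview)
Your proposal is correct and follows essentially the same duality argument the paper uses for Lemma~\ref{Lemma:ErrSolVnorm}, to which the paper simply refers for the proof of this lemma. The only cosmetic differences are that you set up the dual problem with $B_{a_{K,A}}$ rather than $B_{a_K}$ (immaterial, since $a_{K,A}\leq a_K\leq a_{K,+}$), and you divide by the common factor where the paper invokes Young's inequality; neither changes the substance or the resulting constant.
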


	\begin{theorem}\label{TH:ErrorBoundE2}
	Let $r\geq 2$ and $b,c\in[1,+\infty]$ be given such that it holds
	\begin{align*}
	rc\gamma\geq 2 \text{ and }2b\leq rc< \eta-1 
	\end{align*}
	with a fixed real number $\gamma\in(0,min(1,\beta/(2\alpha))$. Here, the parameters $\eta, \alpha$ and $\beta$ are determined by the GRFs $W_1$, $W_2$ and the L\'evy subordinators $l_1$, $l_2$ (see Assumption \ref{ASS:CutProblemEigenvalues}). \\
	Let $m,n\in[1,+\infty]$ be real numbers such that 
	\begin{align*}
	\frac{1}{m} + \frac{1}{c} = \frac{1}{n} + \frac{1}{b}=1,
	\end{align*}
	and let $k_{reg}\geq 2$ and $j_{reg}>0$ be the regularity specifiers given by Assumption~\ref{ASS:IntegrabilityOfSolGradient}.
	If it holds that
	\begin{align*}
	n<1+\frac{j_{reg}}{2} \text{ and } rm< k_{reg},
	\end{align*}
	then the approximated solution $u_{K,A}^{(\varepsilon_W,\varepsilon_l)}$ converges to the solution $u_{K,A}$ of the truncated problem for $\varepsilon_W,\varepsilon_l\rightarrow 0$ and it holds
	\begin{align*}
	\|u_{K,A}-u_{K,A}^{(\varepsilon_W,\varepsilon_l)}\|_{L^r(\Omega;V)}&\leq  C(\overline{a}_-,\mathcal{D},r)\|a_{K,A}^{(\varepsilon_W,\varepsilon_l)}-a_{K,A}\|_{L^{rc}(\Omega;L^{2b}(\mathcal{D}))}\|\nabla u_{K,A}\|_{L^{rm}(\Omega;L^{2n}(\mathcal{D}))}\\
	&\leq  C_{reg}C(\overline{a}_-,\mathcal{D},r)(\varepsilon_W^\gamma + \varepsilon_l^\frac{1}{rc}).
	\end{align*}
	\end{theorem}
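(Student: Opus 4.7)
The plan is to mimic the energy-type argument of Theorem~\ref{TH:ErrorCutProblemBoundedByCoeffError}, but with a \emph{mixed} H\"older splitting so that the coefficient difference is measured in the Bochner norm $L^{rc}(\Omega;L^{2b}(\mathcal{D}))$ tailored to Theorem~\ref{TH:ErrorBoundCoeff}. The argument naturally falls into two phases: first a pathwise bound in the $V$-norm, then an integration in $\omega$ via a two-factor H\"older on the probability space.

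First I would subtract the two weak formulations (with common source $F_\omega$ on the right) to obtain for every $v\in V$ and $\mathbb{P}$-a.e.\ $\omega\in\Omega$ the identity
\begin{align*}
\int_{\mathcal{D}} a_{K,A}^{(\varepsilon_W,\varepsilon_l)}\bigl(\nabla u_{K,A}-\nabla u_{K,A}^{(\varepsilon_W,\varepsilon_l)}\bigr)\cdot\nabla v\,d\underline{x}
= \int_{\mathcal{D}} \bigl(a_{K,A}^{(\varepsilon_W,\varepsilon_l)}-a_{K,A}\bigr)\,\nabla u_{K,A}\cdot\nabla v\,d\underline{x}.
\end{align*}
Testing with $v=u_{K,A}-u_{K,A}^{(\varepsilon_W,\varepsilon_l)}$, using $a_{K,A}^{(\varepsilon_W,\varepsilon_l)}\geq \overline{a}_-$ on the left and three-factor H\"older in $\mathcal{D}$ with exponents $(2b,2n,2)$ (admissible since $\tfrac{1}{b}+\tfrac{1}{n}=1$) on the right, and cancelling a power of the left-hand side, yields the pathwise bound
\begin{align*}
\|\nabla u_{K,A}-\nabla u_{K,A}^{(\varepsilon_W,\varepsilon_l)}\|_{L^2(\mathcal{D})}\leq \tfrac{1}{\overline{a}_-}\,\|a_{K,A}^{(\varepsilon_W,\varepsilon_l)}-a_{K,A}\|_{L^{2b}(\mathcal{D})}\,\|\nabla u_{K,A}\|_{L^{2n}(\mathcal{D})}.
\end{align*}
Lemma~\ref{Lemma:ErrSolVnormE2} transfers this to the $V$-norm at the cost of the random prefactor $a_{K,+}$. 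Taking the $L^r(\Omega)$-norm and applying H\"older in $\Omega$ with exponents $rc$ and $rm$ (noting that $\tfrac{1}{c}+\tfrac{1}{m}=1$ makes $\tfrac{1}{rc}+\tfrac{1}{rm}=\tfrac{1}{r}$) separates the two Bochner factors and produces the first claimed estimate; the stray factor $a_{K,+}$ is absorbed into $C(\overline{a}_-,\mathcal{D},r)$ by inserting a small H\"older slack $\rho>1$ in $\Omega$ and using $\mathbb{E}(a_{K,+}^s)<\infty$ for every $s$, provided by Theorem~\ref{TH:QuantificationOfDiffApprLinftynorm}, exactly as in the proof of Theorem~\ref{TH:ErrorCutProblemBoundedByCoeffError}. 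The second estimate is then immediate: $2b\leq rc<\eta-1$ and $rc\gamma\geq 2$ make Theorem~\ref{TH:ErrorBoundCoeff} applicable with $s=rc$ and $t=2b$, giving $\|a_{K,A}-a_{K,A}^{(\varepsilon_W,\varepsilon_l)}\|_{L^{rc}(\Omega;L^{2b}(\mathcal{D}))}\leq C(\varepsilon_W^{\gamma}+\varepsilon_l^{1/(rc)})$, while $n<1+j_{reg}/2$ (equivalently $2n<2+j_{reg}$) together with $rm<k_{reg}$ is exactly what is needed for Assumption~\ref{ASS:IntegrabilityOfSolGradient} to bound $\|\nabla u_{K,A}\|_{L^{rm}(\Omega;L^{2n}(\mathcal{D}))}$ by a constant multiple of $C_{reg}^{1/k_{reg}}$.

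The main obstacle is the book-keeping of the admissible exponents. Because the sample-space H\"older pairing $\tfrac{1}{c}+\tfrac{1}{m}=1$ already saturates the two-factor splitting, the random prefactor $a_{K,+}$ has to be accommodated by a further $\rho$-slack, which is what forces the strict rather than closed inequalities $n<1+j_{reg}/2$ and $rm<k_{reg}$ in the hypotheses. Harmonising the three H\"older pairings---spatial $(2b,2n,2)$, probabilistic $(rc,rm)$, and the slack that absorbs $a_{K,+}$---with the admissibility ranges of Theorem~\ref{TH:ErrorBoundCoeff} and Assumption~\ref{ASS:IntegrabilityOfSolGradient} is the delicate part of the argument.
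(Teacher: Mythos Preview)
Your proposal is correct and follows essentially the same route as the paper's proof: the energy identity obtained from the two weak formulations, the pathwise gradient bound via H\"older in $\mathcal{D}$ with the $(2b,2n)$ splitting, the upgrade to the $V$-norm through Lemma~\ref{Lemma:ErrSolVnormE2}, H\"older in $\Omega$ with the $(rc,rm)$ pairing, and the extra $\rho$-slack to separate $a_{K,+}$ from $\nabla u_{K,A}$ (using Theorem~\ref{TH:QuantificationOfDiffApprLinftynorm} for the moments of $a_{K,+}$ and Assumption~\ref{ASS:IntegrabilityOfSolGradient} for the gradient) before invoking Theorem~\ref{TH:ErrorBoundCoeff}. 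The only cosmetic difference is that the paper first bounds $\|(a_{K,A}^{(\varepsilon_W,\varepsilon_l)}-a_{K,A})\nabla u_{K,A}\|_{L^2(\mathcal{D})}$ by Cauchy--Schwarz and then applies H\"older with $(b,n)$ to the product, whereas you do a single three-factor H\"older with exponents $(2b,2n,2)$; the resulting pathwise estimate is identical.
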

	
	\begin{proof}
	By a direct calculation we obtain the pathwise estimate
	\begin{align*}
	\|\nabla u_{K,A}-\nabla u_{K,A}^{(\varepsilon_W,\varepsilon_l)}\|_{L^2(\mathcal{D})}^2&\leq \frac{1}{\overline{a}_-}\int_\mathcal{D}a_{K,A}^{(\varepsilon_W,\varepsilon_l)}(|\nabla u_{K,A} - \nabla u_{K,A}^{(\varepsilon_W,\varepsilon_l)}|_2^2) d\underline{x}.
	\end{align*}
	Since $u_{K,A}$ (resp. $u_{K,A}^{(\varepsilon_W,\varepsilon_l)}$) is the weak solution to problem~\eqref{EQ:EllProblemCutUpperCut} - \eqref{EQ:DiffCoeffDefiCutUpperCut} (resp.~\eqref{EQ:EllProblemCutAppr} - \eqref{EQ:EllProblemBCNCutAppr}) we have
	\begin{align*}
	\int_D a_{K,A}^{(\varepsilon_W,\varepsilon_l)}\nabla u_{K,A}^{(\varepsilon_W,\varepsilon_l)}\cdot \nabla u_{K,A} d\underline{x} &= \int_\mathcal{D}a_{K,A}|\nabla u_{K,A}|_2^2d\underline{x},\\
	\int_\mathcal{D}a_{K,A}^{(\varepsilon_W,\varepsilon_l)}|\nabla u_{K,A}^{(\varepsilon_W,\varepsilon_l)}|_2^2d\underline{x} &= \int_\mathcal{D}a_{K,A}\nabla u_{K,A}\cdot \nabla u_{K,A}^{(\varepsilon_W,\varepsilon_l)}d\underline{x}
	\end{align*}
	$\mathbb{P}$-a.s. and therefore
	\begin{align*}
	\int_Da_{K,A}^{(\varepsilon_W,\varepsilon_l)}|\nabla u_{K,A} - \nabla u_{K,A}^{(\varepsilon_W,\varepsilon_l)}|_2^2d\underline{x}=\int_\mathcal{D}(a_{K,A}^{(\varepsilon_W,\varepsilon_l)}-a_{K,A})\nabla u_{K,A}\cdot(\nabla u_{K,A} - \nabla u_{K,A}^{(\varepsilon_W,\varepsilon_l)}) d\underline{x}.
	\end{align*}
	Using H\"older's inequality we calculate
	\begin{align*}
	\|\nabla u_{K,A}-\nabla u_{K,A}^{(\varepsilon_W,\varepsilon_l)}\|_{L^2(\mathcal{D})}^2&\leq \frac{1}{\overline{a}_-} \|(a_{K,A}^{(\varepsilon_W,\varepsilon_l)}-a_{K,A})\nabla u_{K,A}\|_{L^2(\mathcal{D})}\|\nabla u_{K,A} - \nabla u_{K,A}^{(\varepsilon_W,\varepsilon_l)}\|_{L^2(\mathcal{D})}
	\end{align*}
	and therefore
	\begin{align*}
		\|\nabla u_{K,A}-\nabla u_{K,A}^{(\varepsilon_W,\varepsilon_l)}\|_{L^2(\mathcal{D})}\leq \frac{1}{\overline{a}_-} \|(a_{K,A}^{(\varepsilon_W,\varepsilon_l)}-a_{K,A})\nabla u_{K,A}\|_{L^2(\mathcal{D})},
	\end{align*}
\noindent Next, we apply Lemma \ref{Lemma:ErrSolVnormE2} to obtain the following estimate.
\begin{align*}
\|u_{K,A}-u_{K,A}^{(\varepsilon_W,\varepsilon_l)}\|_{V}\leq C(\overline{a}_-,\mathcal{D})a_{K,+}\|(a_{K,A}^{(\varepsilon_W,\varepsilon_l)}-a_{K,A})\nabla u_{K,A}\|_{L^2(\mathcal{D})}
\end{align*}
\noindent Hence, it remains to bound the norm $\|(a_{K,A}^{(\varepsilon_W,\varepsilon_l)}-a_{K,A})\nabla u_{K,A}\|_{L^2(\mathcal{D})}$. By H\"older's inequality we obtain 
	\begin{align*}
	\|(a_{K,A}^{(\varepsilon_W,\varepsilon_l)}-a_{K,A})\nabla u_{K,A}\|_{L^2(\mathcal{D})} \leq \|a_{K,A}^{(\varepsilon_W,\varepsilon_l)}-a_{K,A}\|_{L^{2b}(\mathcal{D})}\|\nabla u_{K,A}\|_{L^{2n}(\mathcal{D})}.
	\end{align*}
Applying H\"older's inequality once more we estimate
\begin{align*}
\|u_{K,A}-u_{K,A}^{(\varepsilon_W,\varepsilon_l)}\|_{L^r(\Omega;V)}&\leq C(\overline{a}_-,\mathcal{D})\|a_{K,A}^{(\varepsilon_W,\varepsilon_l)}-a_{K,A}\|_{L^{rc}(\Omega;L^{2b}(\mathcal{D}))}\mathbb{E}(a_{K,+}^{rm}\|\nabla u_{K,A}\|_{L^{2n}(\mathcal{D})}^{rm})^\frac{1}{rm}
\end{align*}
\noindent By assumption it holds $rm<k_{reg}$. Hence, we can choose a real number $\rho>1$ such that $rm\rho<k_{reg}$. We define the dual number $\rho':=\frac{\rho}{1-\rho}\in(1,+\infty)$ and use H\"older's inequality to obtain
\begin{align*}
\mathbb{E}(a_{K,+}^{rm}\|\nabla u_{K,A}\|_{L^{2n}(\mathcal{D})}^{rm})^\frac{1}{rm}\leq \mathbb{E}(a_{K,+}^{rm\rho'})^\frac{1}{rm\rho'}\|\nabla u_{K,A}\|_{L^{rm\rho}(\Omega;L^{2n}(\mathcal{D}))}\leq C(\mathcal{D},r)C_{reg},
\end{align*}
where we again used the fact that $\mathbb{E}(a_{K,+}^{nr\rho'})^{\frac{1}{nr\rho'}}<+\infty$ (see Theorem \ref{TH:QuantificationOfDiffApprLinftynorm}) together with Assumption~\ref{ASS:IntegrabilityOfSolGradient}.
Finally we obtain the estimate
\begin{align*}
\|u_{K,A}-u_{K,A}^{(\varepsilon_W,\varepsilon_l)}\|_{L^r(\Omega;V)}&\leq  C(\overline{a}_-,\mathcal{D},r)C_{reg}\|a_{K,A}^{(\varepsilon_W,\varepsilon_l)}-a_{K,A}\|_{L^{rc}(\Omega;L^{2b}(\mathcal{D}))}\\
	&\leq C_{reg}C(\overline{a}_-,\mathcal{D},r)(\varepsilon_W^\gamma + \varepsilon_l^\frac{1}{rc}),
\end{align*}
where we applied Theorem~\ref{TH:ErrorBoundCoeff} in the last estimate.
\end{proof}

We close this section with a remark on how to choose the parameters $A$, $\varepsilon_W$ and $\varepsilon_l$ to obtain an approximation error smaller than any given threshold $\delta>0$.

\begin{rem}
For any given parameter $K$ large enough (see Remark \ref{rem:approxK}), we choose a positive numer $A>0$ such that the first error contributor satisfies $E_1=\|u_K-u_{K,A}\|_{L^r(\Omega;V)}< \delta/2$ (see Theorem \ref{TH:ErrorCutProblemBoundedByCoeffError} and Theorem \ref{TH:QuantificationOfDiffApprLinftynorm}). Afterwards, under the assumptions of Theorem \ref{TH:ErrorBoundE2}, we may choose the approximation parameters $\varepsilon_W$ and $\varepsilon_l$ small enough, such that the secontd error contributor satisfies $E_2 = \|u_{K,A}-u_{K,A}^{(\varepsilon_W,\varepsilon_l)}\|_{L^r(\Omega;V)}<\delta/2$. Hence, we get an overall error smaller than $\delta$ (see Equation \eqref{EQ:ErrorSplit}).
\end{rem}
	
\section{Pathwise sample-adapted Finite Element approximation}\label{sec:approx_solution}
	
	We want to approximate the solution $u$ to the problem~\eqref{EQ:EllProblem} - \eqref{EQ:EllProblemBCN} with diffusion coefficient $a$ given by Equation~\eqref{EQ:DiffCoeffDefi} using a pathwise Finite Element (FE) approximation of the solution $u_{K,A}^{(\varepsilon_W,\varepsilon_l)}$ of problem~\eqref{EQ:EllProblemCutAppr} - \eqref{EQ:EllProblemBCNCutAppr} where the approximated diffusion coefficient $a_{K,A}^{(\varepsilon_W,\varepsilon_l)}$ is given by \eqref{EQ:DiffCoeffApprDef}. Therefore, for almost all $\omega\in\Omega$, we have to find a function $u_{K,A}^{(\varepsilon_W,\varepsilon_l)}(\omega,\cdot)\in V$ such that it holds
	\begin{align}\label{EQ:VariationalProblemApprSol}
	B_{a_{K,A}^{(\varepsilon_W,\varepsilon_l)}(\omega)} (u_{K,A}^{(\varepsilon_W,\varepsilon_l)}(\omega,\cdot), v) &:=\int_{\mathcal{D}}a_{K,A}^{(\varepsilon_W,\varepsilon_l)}(\omega,\underline{x})\nabla	u_{K,A}^{(\varepsilon_W,\varepsilon_l)}(\omega,\underline{x})\cdot\nabla v(\underline{x})d\underline{x}\\
	&=\int_\mathcal{D}f(\omega,\underline{x})v(\underline{x})d\underline{x} + \int_{\Gamma_2} g(\omega,\underline{x})[Tv](\underline{x})d\underline{x}=:F_\omega(v),\notag
	\end{align}
	for every $v\in V$. Here, $K,A,\varepsilon_W,\varepsilon_l$ are fixed approximation parameters. In order to solve this variational problem numerically we consider a standard Galerkin approach and assume $\mathcal{V}=(V_\ell,~\ell\in\mathbb{N}_0)$ to be a sequence of finite-dimensional subspaces $V_\ell\subset	V$ with $\dim(V_\ell)=d_\ell$ and $V_\ell\subset V_{\ell+1}$ for all $\ell\geq 0$. We denote by $(h_\ell,~\ell\in\mathbb{N}_0)$ the corresponding sequence of refinement sizes which is assumed to decrease monotoncally to zero for $\ell\rightarrow \infty$. Let $\ell\in\mathbb{N}_0$ be fixed and denote by $\{v_1^{(\ell)},\dots,v_{d_\ell}^{(\ell)}\}$ a basis of $V_\ell$. The (pathwise) discrete version of~\eqref{EQ:VariationalProblemApprSol} reads:\\
	
	 Find $u_{K,A,\ell}^{(\varepsilon_W,\varepsilon_l)}(\omega,\cdot)\in V$ such that \begin{align*}
	B_{a_{K,A}^{(\varepsilon_W,\varepsilon_l)}(\omega)}(u_{K,A,\ell}^{(\varepsilon_W,\varepsilon_l)}(\omega,\cdot),v_\ell^{(i)})=F_\omega(v_\ell^{(i)}) \text{ for all } i=1,\dots,d_\ell.
\end{align*}		
We expand the function $u_{K,A,\ell}^{(\varepsilon_W,\varepsilon_l)}(\omega,\cdot)$ with respect to the basis $\{v_1^{(\ell)},\dots,v_{d_\ell}^{(\ell)}\}$:
\begin{align*}
u_{K,A,\ell}^{(\varepsilon_W,\varepsilon_l)}(\omega,\cdot)=\sum_{i=1}^{d_\ell} c_iv_i^{(\ell)},
\end{align*}
where the coefficient vector $\textbf{c}=(c_1,\dots,c_{d_\ell})^T\in\mathbb{R}^{d_\ell}$ is determined by the linear equation system 
\begin{align*}
\textbf{B}(\omega)\textbf{c}=\textbf{F}(\omega),
\end{align*}
with a stochastic stiffness matrix $\textbf{B}(\omega)_{i,j}=B_{a_{K,A}^{(\varepsilon_W,\varepsilon_l)}(\omega)}(v_i^{(\ell)},v_j^{(\ell)})$ and load vector $\mathbf{F}(\omega)_i=F_\omega (v_i^{(\ell)})$ for $i,j=1,\dots,d_\ell$.

	\begin{rem}\label{REM:ConvRateNA}
	Let $(\mathcal{K}_\ell,~\ell\in \mathbb{N}_0)$ be a sequence of triangulations on $\mathcal{D}$ and denote by $\theta_\ell>0$ the minimum interior angle of all triangles in $\mathcal{K}_\ell$. We assume $\theta_\ell\geq \theta>0$ for a positive constant $\theta$ and define the maximum diameter of the triangulation $\mathcal{K}_\ell$ by 
	$h_\ell:=\max\limits_{K\in\mathcal{K}_\ell}\, \text{diam}(K),$
	for $\ell\in\mathbb{N}_0$. Further, we define the finite dimensional subspaces by 
	$V_\ell:=\{v\in V~|~v|_K\in\mathcal{P}_1,K\in \mathcal{K}_\ell\},$
	where $\mathcal{P}_1$ denotes the space of all polynomials up to degree one. If we assume that for $\mathbb{P}-$almost all $\omega\in\Omega$ it holds $u_{K,A}^{(\varepsilon_W,\varepsilon_l)}(\omega,\cdot)\in H^{1+\kappa_a}(\mathcal{D})$ for some positive number $\kappa_a>0$, the pathwise discretization error is bounded by C\'ea's lemma $\mathbb{P}$-a.s. by 
	\begin{align*}
	\|u_{K,A}^{(\varepsilon_W,\varepsilon_l)}(\omega,\cdot)-u_{K,A,\ell}^{(\varepsilon_W,\varepsilon_l)}(\omega,\cdot)\|_V\leq C_{\theta,\mathcal{D}}\frac{A}{\overline{a}_-}\|u_{K,A}^{(\varepsilon_W,\varepsilon_l)}(\omega,\cdot)\|_{H^{1+\kappa_a}(\mathcal{D})}h_\ell^{\min(\kappa_a,1)},
	\end{align*}
	(see \cite[Section 4]{AStudyOfElliptic} and \cite[Chapter 8]{EllipticDifferentialEquations}).
	If the bound $\|u_{K,A}^{(\varepsilon_W,\varepsilon_l)}\|_{L^2(\Omega;H^{1+\kappa_a}(\mathcal{D}))}\leq C_u=C_u(K,A)$ is finite for the fixed approximation parameters $K,A$, we immediately obtain
	\begin{align*}
	\|u_{K,A}^{(\varepsilon_W,\varepsilon_l)}-u_{K,A,\ell}^{(\varepsilon_W,\varepsilon_l)}\|_{L^2(\Omega;V)}\leq C_{\theta,\mathcal{D}} \frac{A}{\overline{a}_-}C_u h_\ell^{\min(\kappa_a,1)}.
	\end{align*}
	\end{rem}
	Note that, for general jump-diffusion problems, one obtains a discretization error of order $\kappa_a \in(1/2,1)$. In general, we cannot expect the full order of convergence $\kappa_a=1$ since the diffusion coefficient is discontinuous. Without special treatment of the interfaces with respect to the triangulation, one cannot expect a convergence rate which is higher than $\kappa_a=1/2$ for the deterministic problem (see \cite{TheFiniteElementMethodForELlipticEquationsWithDiscontinuousCoefficients} and~\cite{AStudyOfElliptic}).\\
	
	\subsection{Sample-adapted triangulations}
	In~\cite{AStudyOfElliptic}, the authors suggest sample-adapted triangulations to improve the convergence rate of the FE approximation: Consider a fixed $\omega\in \Omega$ and assume that the discontinuities of the diffusion coefficient are described by the partition $\mathcal{T}(\omega)=(\mathcal{T}_i,~i=1,\dots, \tau(\omega))$ of the domain $\mathcal{D}$ where $\tau(\omega)$ describes the number of elements in the partition. We consider finite-dimensional subspaces $\hat{V}_\ell(\omega)\subset V$ with (stochastic) dimension $\hat{d}_\ell(\omega)\in\mathbb{N}$. We denote by $\hat{\theta}_\ell(\omega)$ the minimal interior angle within $\mathcal{K}_\ell(\omega)$ and assume the existence of a positive number $\theta>0$ such that $\inf\{\hat{\theta}_\ell(\omega) ~|~ \ell\in\mathbb{N}_0\}\geq \theta$ for $\mathbb{P}$-almost all $\omega\in\Omega$. Assume that $\mathcal{K}_\ell(\omega)$ is a triangulation of $\mathcal{D}$ which is adjusted to the partition $\mathcal{T}(\omega)$ in the sense that for every $i=1,\dots,\tau(\omega)$ it holds
	\begin{align*}
	\mathcal{T}_i\subset \bigcup_{\kappa\in\mathcal{K}_\ell(\omega)}\kappa \text{ and }\hat{h}_\ell(\omega):=\underset{K\in\mathcal{K}_\ell(\omega)}{\max}\, diam(K)\leq \overline{h}_\ell,
	\end{align*}
	for all $\ell\in\mathbb{N}_0$, where $(\overline{h}_\ell,~\ell\in\mathbb{N}_0)$ is a deterministic, decreasing sequence of refinement thresholds which converges to zero (see Figure \ref{FIG:NumExAdaptTri}).

	\begin{figure}[ht]
	\centering
	\subfigure{\includegraphics[scale=0.15]{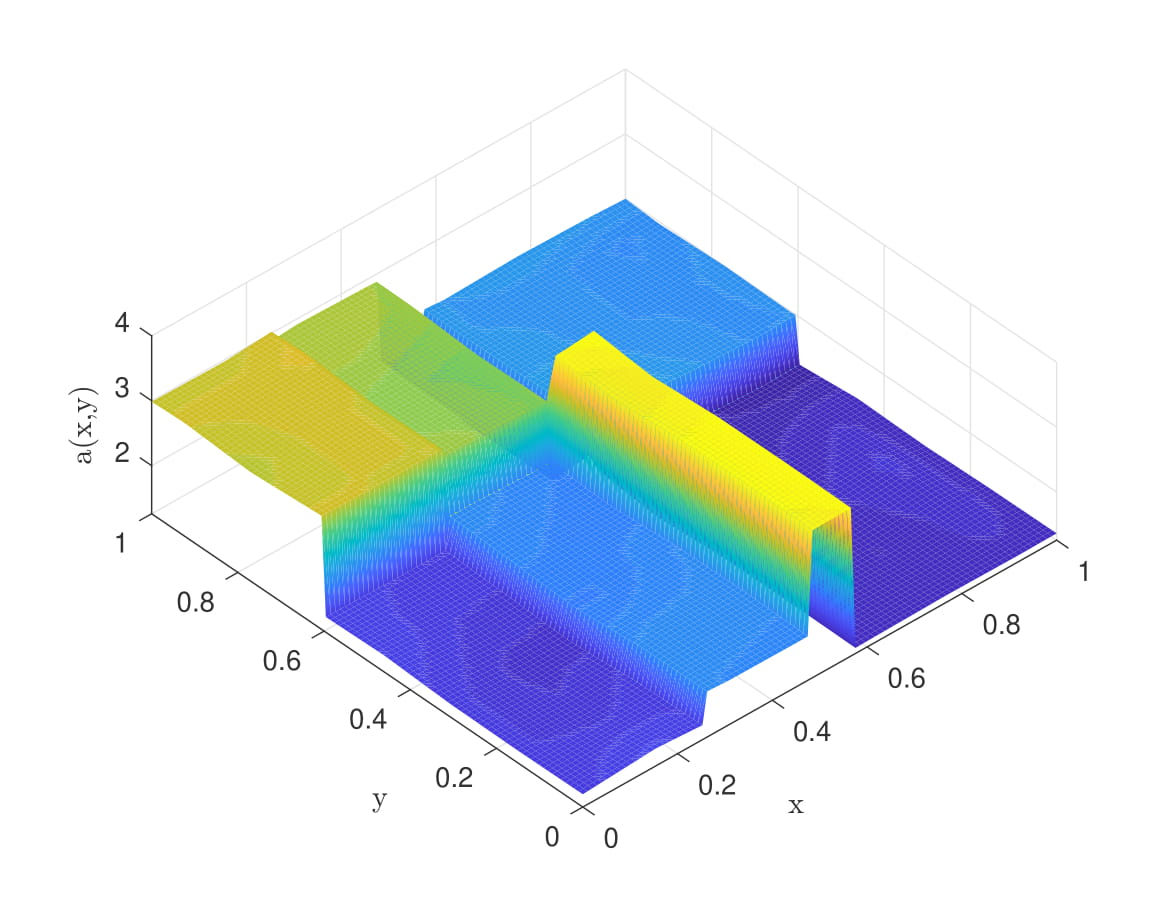}}
	\subfigure{\includegraphics[scale=0.39]{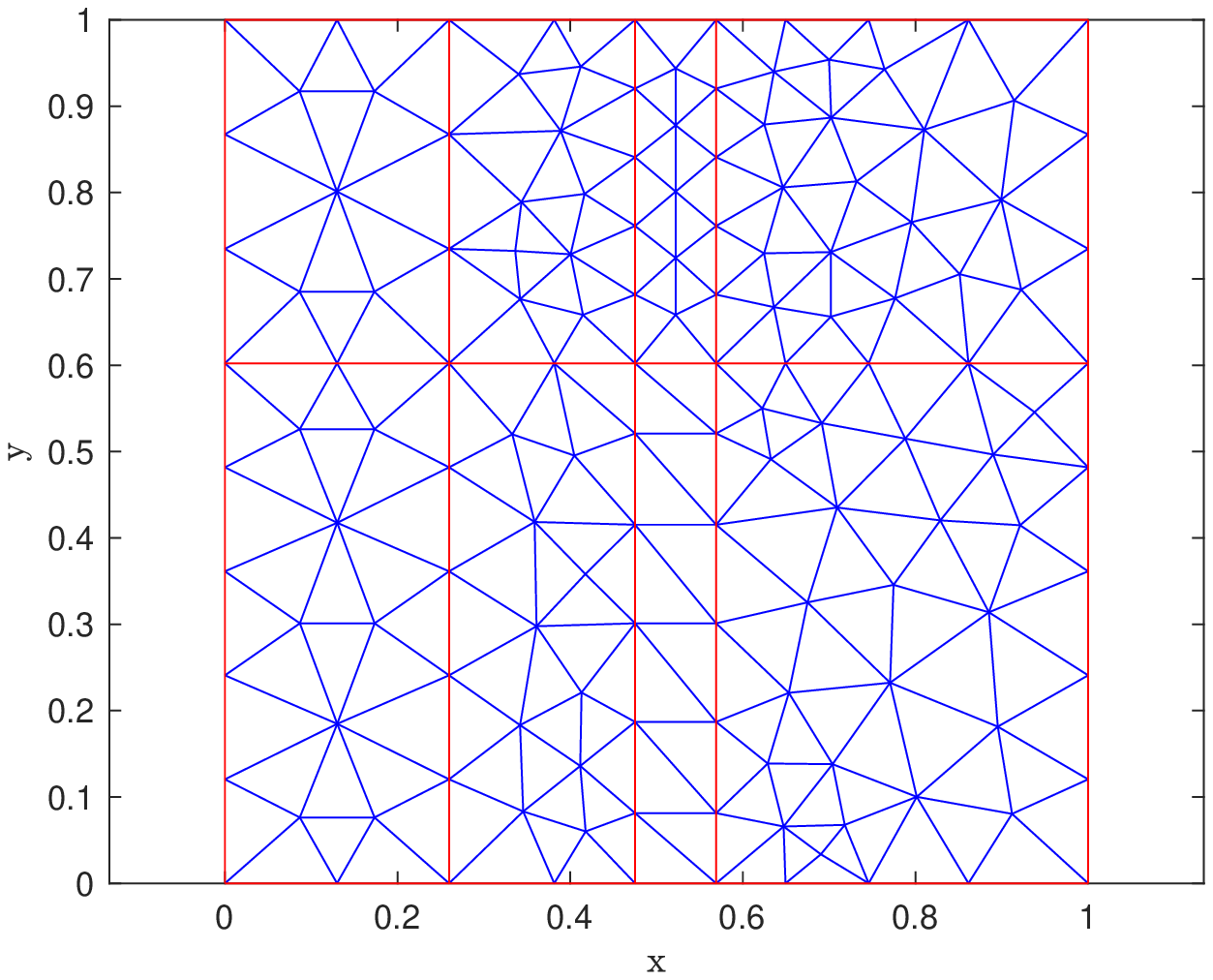}}
    \caption{A sample of a Poisson-subordinated Mat\'ern-1.5-GRF (left) with corresponding sample-adapted triangulations (right).}
    \label{FIG:NumExAdaptTri}
    \end{figure}

	Sample-adapted triangulations lead to an improved  convergence rate for our problem (see~\cite[Section 4.1]{AStudyOfElliptic} and Section~\ref{sec:numerics}).  This observation together with Remark~\ref{REM:ConvRateNA} motivate the following assumption for the rest of this paper (see~\cite[Assumption 4.4]{AStudyOfElliptic}).
	
	\begin{assumption}\label{ASS:ConvRateFEMMeanSquareError}
	There exist deterministic constants $\hat{C}_{u}, C_{u},\hat{\kappa}_a,\kappa_a>0$ such that for any $\varepsilon_W,\varepsilon_l>0$ and any $\ell\in\mathbb{N}_0$, the Finite Element approximation errors of $\hat{u}_{K,A,\ell}^{(\varepsilon_W,\varepsilon_l)}\approx u_{K,A}^{(\varepsilon_W,\varepsilon_l)}$ in the (sample-adapted) subspaces $\hat{V}_\ell$, respectively $u_{K,A,\ell}^{(\varepsilon_W,\varepsilon_l)}\approx u_{K,A}^{(\varepsilon_W,\varepsilon_l)}$ in $V_\ell$, are bounded by
	\begin{align*}
	\|u_{K,A}^{(\varepsilon_W,\varepsilon_l)} - \hat{u}_{K,A,\ell}^{(\varepsilon_W,\varepsilon_l)}\|_{L^2(\Omega;V)}&\leq \hat{C}_{u} \mathbb{E}(\hat{h}_\ell^{2\hat{\kappa}_a})^{1/2} , \text{ respectively,}\\
	\|u_{K,A}^{(\varepsilon_W,\varepsilon_l)} - u_{K,A,\ell}^{(\varepsilon_W,\varepsilon_l)}\|_{L^2(\Omega;V)}&\leq C_{u}h_\ell^{\kappa_a},
	\end{align*}
	where the constants $\hat{C}_{u},C_{u}$ may depend on $a,f,g,K,A$ but are independent of $\hat{h}_\ell,h_\ell,\hat{\kappa}_a$ and $\kappa_a$.
	\end{assumption}

	In practice, as we can see in the numerical examples in the subsequent section, one can (at least) recover the deterministic rates in the strong error. In fact, in the non-adapted case it is possible to get better convergence rates than expected for some examples. By construction of our random field, we always obtain an interface geometry with fixed angles and bounded jump height, which have great influence on the solution regularity, see e.g.~\cite{RegularityResultsForLaplaceInterfaceProblemsInTroDimensions}.

\section{Numerical examples}\label{sec:numerics}
In this section we verify our theoretical results in numerical examples. In all experiments we work on the domain $\mathcal{D}=(0,1)^2$ and use a FE method with hat-function basis. Here, we distinguish between the \textit{standard} FEM approach and the \textit{sample-adapted} FEM approach introduced in Section \ref{sec:approx_solution}.  We compare both and investigate how different L\'evy subordinators influence the strong convergence rate. In our first example we use Poisson processes with low intensity to investigate the superiority of the presented sample-adapted triangulation. In the second example we use Poisson subordinators with a significantly higher intensity. Besides Poisson subordinators we also use Gamma processes which have infinite activity.

\subsection{Strong error approximation}

In each numerical experiment we choose a problem dependent cut-off level $K$ for the subordinators in \eqref{EQ:DiffCoeffDefiCut} large enough so that its influence is negligibly (see Remark \ref{rem:approxK}).  Further, we choose the cut-off level for the diffusion coefficient $A$ in \eqref{EQ:DiffCoeffDefiCutUpperCut} large enough such that it has no influence in numerical experiments, $e.g.~ A=50$ and therefore the error induced by the error contributor $E_1$ in~\eqref{EQ:ErrorSplit} can be neglected in our experiments. We estimate the strong error using a standard Monte Carlo estimator. Assume that a sequence of (sample-adapted) finite-dimensional subspaces $(\hat{V}_\ell,~\ell \in \mathbb{N}_0)\subset V$ is given where we use the notation of Section \ref{sec:approx_solution}. For readability we only treat the case of pathwise sample-adapted Finite Element approximations in the rest of the theoretical consideration in this subsection. We would like to point out, however, that similar arguments lead to the corresponding results for standard FE approximations.

Under the assumptions of Theorem \ref{TH:ErrorBoundE2} and Assumption \ref{ASS:ConvRateFEMMeanSquareError} we obtain
\begin{align}\label{EQ:StrongErrSplit}
\|u_{K,A} - \hat{u}_{K,A,\ell}^{(\varepsilon_W,\varepsilon_l)}\|_{L^2(\Omega;V)} &\leq \|u_{K,A} - u_{K,A}^{(\varepsilon_W , \varepsilon_l)}\|_{L^2(\Omega;V)} + \|u_{K,A}^{(\varepsilon_W , \varepsilon_l)} - \hat{u}_{K,A,\ell}^{(\varepsilon_W , \varepsilon_l)}\|_{L^2(\Omega;V)}\\
&\leq C ( \varepsilon_W^\gamma + \varepsilon_l^\frac{1}{rc} + \mathbb{E}(\hat{h}_\ell^{2\hat{\kappa}_a})^{1/2}),\notag
\end{align}
with a constant $C=C(C_{reg},D,\overline{a}_-,\hat{C}_u)$. Therefore, in order to equilibrate all error contributions, we choose the approximation parameters $\varepsilon_W$ and $\varepsilon_l$ in the following way:
\begin{align}\label{EQ:RuleApprParams}
\varepsilon_W \simeq \mathbb{E}(\hat{h}_\ell^{2\hat{\kappa}_a})^{1/(2\gamma)} \text{ and } \varepsilon_l \simeq \mathbb{E}(\hat{h}_\ell^{2\hat{\kappa}_a})^{rc/2}.
\end{align}
For readability, we omit the cut-off parameters $K$ and $A$ in the following and use the notation $\hat{u}_{\ell,\varepsilon_W,\varepsilon_l}=\hat{u}_{K,A,\ell}^{(\varepsilon_W,\varepsilon_l)}$. Choosing the approximation parameters $\varepsilon_W, \varepsilon_l$ according to \eqref{EQ:RuleApprParams}, we can investigate the strong error convergence rate by a Monte Carlo estimation of the left hand side of \eqref{EQ:StrongErrSplit}: for a fixed natural number $M\in \mathbb{N}$ we approximate
\begin{align} \label{EQ:MCStrongErr}
\|u_{K,A} - \hat{u}_{K,A,\ell}^{(\varepsilon_W,\varepsilon_l)}\|_{L^2(\Omega;V)}^2 = \|u_{K,A} - \hat{u}_{\ell,\varepsilon_W,\varepsilon_l}\|_{L^2(\Omega;V)} ^2\approx \frac{1}{M}\sum_{i=1}^M \|u_{ref}^{(i)} - \hat{u}_{\ell,\varepsilon_W,\varepsilon_l}^{(i)}\|_V^2,
\end{align}
where $(u_{ref}^{(i)},~i=1,\dots , M)$ are i.i.d. realizations of the stochastic reference solution $u_{ref}\approx u_{K,A}$ and $(\hat{u}_{\ell,\varepsilon_W,\varepsilon_l}^{ (i)}, ~i=1\dots,M)$ are i.i.d. realizations of the FE approximation $\hat{u}_{\ell,\varepsilon_W,\varepsilon_l}$ of the PDE solution on the FE subspace $\hat{V}_\ell$. In all examples we choose the sample number $M$ so that the standard deviation of the MC samples is smaller than $10\%$ of the MC estimator itself.
\subsection{PDE Parameters}
In all of our numerical examples we choose $\overline{a}\equiv 1/10$, $f\equiv 10$, $\Phi_1=1/100\,\exp(\cdot)$ and $\Phi_2=5\,|\cdot|$. Further, we impose mixed Dirichlet-Neumann boundary conditions if nothing else is explicitly mentioned. To be precise, we split the domain boundary $\partial\mathcal{D}$ by $\Gamma_1=\{0,1\}\times[0,1]$ and $\Gamma_2=(0,1)\times\{0,1\}$ and impose the pathwise mixed Dirichlet-Neumann boundary conditions
\begin{align*}
u_{K,A}(\omega,\cdot)=\begin{cases} 0.1  & ~on~ \{0\}\times [0,1] \\ 0.3 &~on~\{1\}\times [0,1]  \end{cases} \text{ and } a_{K,A}\overrightarrow{n}\nabla \cdot u_{K,A}=0 \text{ on } \Gamma_2,
\end{align*}
for $\omega\in\Omega$.

We choose $W_1$ to be a Mat\'ern-1.5-GRF on $\mathcal{D}$ with correlation length $r_1=0.5$ and different variance parameters $\sigma_1^2$. Further, we set  $W_2$ to be a Mat\'ern-1.5-GRF on $[0,K]^2$ which is independent of $W_1$ with different variances $\sigma_2^2$ and correlation lengths $r_2$. We use a reference grid with $800\times 800$ equally spaced points on the domain $\mathcal{D}$ for interpolation and prolongation.
\subsection{Poisson subordinators}
In this section we use Poisson processes to subordinate the GRF $W_2$ in the diffusion coefficient in \eqref{EQ:DiffCoeffDefi}. We consider both, high and low intensity Poisson processes and vary the boundary conditions. Further, using Poisson subordinators allows for a detailed investigation of the approximation error caused by approximating the L\'evy subordinators $l_1$ and $l_2$ according to Assumption \ref{ASS:CutProblemEigenvalues} \textit{v}.
\subsubsection{The two approximation methods}
\label{Subsubsec:TheTwoApprMethods}

Using Poisson processes as subordinators allows for two different simulation approaches in the numerical examples: the first approach is an exact and grid-independent simulation of a Poisson process  using for example the \textit{Method of Exponential Spacings} or the \textit{Uniform Method} (see \cite[Section 8.1.2]{LevyProcessesInFinance}). On the other hand, one may also work with approximations of the Poisson processes satisfying Assumption \ref{ASS:CutProblemEigenvalues} \textit{v}.

We sample the values of the Poisson($\lambda$)-processes $l_1$ and $l_2$ on an equidistant grid $\{x_i,~i=0,...,N_l\}$ with $x_0=0$ and $x_{N_l}=1$ and step size $|x_{i+1}-x_i|\leq \varepsilon_l\leq 1$ for all $i=0,\dots,N_l-1$. Further, we approximate the stochastic processes by a piecewise constant extension $l_j^{(\varepsilon_l)}\approx l_j$ of the values on the grid:
\begin{align*}
l_j^{(\varepsilon_l)}(x)=\begin{cases} l_j(x_i) & x\in[x_i,x_{i+1}) \text{ for } i=0,...,N_l-1,  \\ l_j(x_{N_l-1}) & x=1.  \end{cases}
\end{align*}
for $j=1,2$. Since the Poisson process has independent, Poisson distributed increments, values of the Poisson process at the discrete points $\{x_i,~i=0,\dots,N_l\}$ can be generated by adding independent Poisson distributed random variables. In the following we refer to this approach as the \textit{approximation approach} to simulate a Poisson process. Note that in this case Assumption \ref{ASS:CutProblemEigenvalues} \textit{v} holds with $\eta =+\infty$. In fact, for any $s\in[1,+\infty)$ we obtain for $j=1,2$ and an arbitrary $x\in[0,1)$ with $x\in[x_i,x_{i+1})$:
\begin{align*}
\mathbb{E}(|l_j(x)-l_j^{(\varepsilon_l)}(x)|^s)=\mathbb{E}(|l_j(x)-l_j(x_i)|^s)\leq \mathbb{E}(|l_j(x_{i+1}-x_i)|^s)\leq\mathbb{E}(|l_j(\varepsilon_l)|^s),
\end{align*}
which is independent of the specific $x\in[0,1)$. Note that this also holds for $x=D=1$ and therefore
\begin{align*}
\underset{x\in[0,1]}{\sup}\,\mathbb{E}(|l_j(x)-l_j^{(\varepsilon_l)}(x)|^s)\leq \mathbb{E}(|l_j(\varepsilon_l)|^s).
\end{align*}
For a Poisson process with parameter $\lambda$ we obtain 
\begin{align*}
\mathbb{E}(|l_j(\varepsilon_l)|^s) = e^{-\lambda\varepsilon_l}\sum_{k=0}^\infty k^s \frac{(\lambda\varepsilon_l)^k}{k!}\leq\varepsilon_l\sum_{k=1}^\infty k^s \frac{\lambda^k}{k!}\leq C_l\varepsilon_l,
\end{align*}
where the series converges by the ratio test.

Since the Poisson process allows for both approaches - approximation and exact simulation of the process - the use of these processes are suitable to investigate the additional error in the approximation of the PDE solution resulting from an approximation of the subordinators.

\subsubsection{Poisson subordinators: low intensity and mixed boundary conditions}
\label{subsubsec:NumexPoiss1Mixed}

In this example we choose $l_1$ and $l_2$ to be Poisson($1$)-subordinators. Further, the variance parameter of the GRF $W_1$ is set to be $\sigma_1=1.5$ and the variance and correlation parameters of the GRF $W_2$ are given by $\sigma_2=0.3$ and $r_2=1$.

For independent Poisson($1$)-subordinators $l_1$ and $l_2$  we choose $K=8$ as the cut-off parameter (see~\eqref{EQ:DiffCoeffDefiCut}). With this choice we obtain
\begin{align*}
\mathbb{P}(\underset{t\in[0,1]}{\sup}\, l_j(t)\geq K)=\mathbb{P}(l_j(1)\geq K)\approx 1.1252e^{-06},
\end{align*}
for $j=1,2$, such that this cut-off has no influence in the numerical example.
Note that for Mat\'ern-1.5-GRFs we can expect $\gamma=1$ in Equation~\eqref{EQ:MeanHoelderContGRFs} (see~\cite[Chapter 5]{AMultilevelMonteCarloAlgorithmForParabolicAdvectionDiffusionProblemsWithDiscontinuousCoefficients},~\cite[Proposition 9]{QuasiMonteCarloFEMethodsForEllipticPDEsWithLognormalRandomCoefficients}).

We approximate the GRFs $W_1$ and $W_2$ by the circulant embedding method (see~ \cite{AnalysisOfCirculantEmbeddingMethodsForSamplingStationaryRandomFields} and \cite{CirculantEmbeddingWithWMCAnalysisForEllipicPDEWithLognormalCoefficients}) to obtain approximations $W_1^{(\varepsilon_W)}\approx W_1$ and $W_2^{(\varepsilon_W)}\approx W_2$ as in Lemma~\ref{LE:StrongErrorBoundGRFApprox}. 
Since $\eta=+\infty$ and $f\in L^q(\Omega;H)$ for every $q\geq 1$ we choose for any positive $\delta>0$
\begin{align*}
r=2, ~c=b=1+\delta 
\end{align*}
to obtain from Theorem \ref{TH:ErrorBoundE2}
\begin{align*}
\|u_{K,A}-u_{K,A}^{(\varepsilon_W,\varepsilon_l)}\|_{L^2(\Omega;V)}\leq C_{reg}\frac{C(D)}{\overline{a}_-} (\varepsilon_W + \varepsilon_l^ \frac{1}{2c}),
\end{align*}
where we have to assume that $j_{reg}\geq 2((1+\delta)/\delta-1)$ and $k_{reg}\geq 2(1+\delta)/\delta$ for the regularity constants $j_{reg},k_{reg}$ given in Assumption \ref{ASS:IntegrabilityOfSolGradient}. For $\delta=0.05$ we obtain 
\begin{align}\label{EQ:ErrorBoundNumEx1}
\|u_{K,A}-u_{K,A}^{(\varepsilon_W,\varepsilon_l)}\|_{L^2(\Omega;V)}\leq C_{reg}\frac{C(D)}{\overline{a}_-} (\varepsilon_W + \varepsilon_l^ \frac{1}{2.01}).
\end{align}
Therefore, we get $\gamma=1$ and $rc=2.01$ in the equilibration formula \eqref{EQ:RuleApprParams}.

Figure~\ref{fig:poisson1_samples} shows three different samples of the diffusion coefficient and the corresponding FE approximations of the PDE solution.

	\begin{figure}[ht]
	\centering
	\subfigure{\includegraphics[scale=0.14]{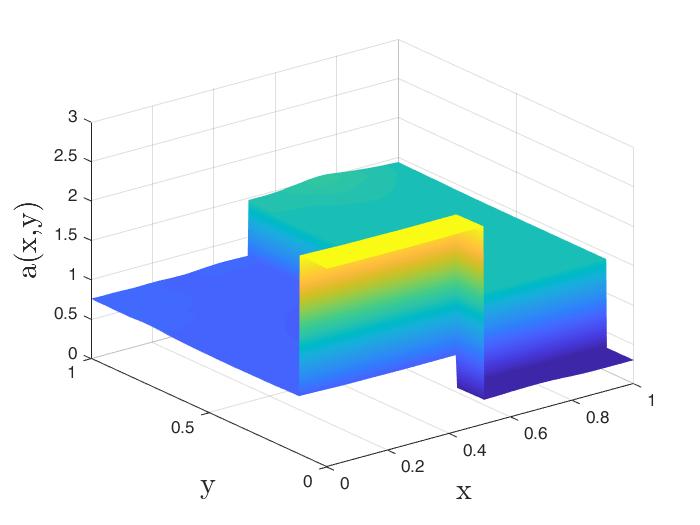}}
	\subfigure{\includegraphics[scale=0.14]{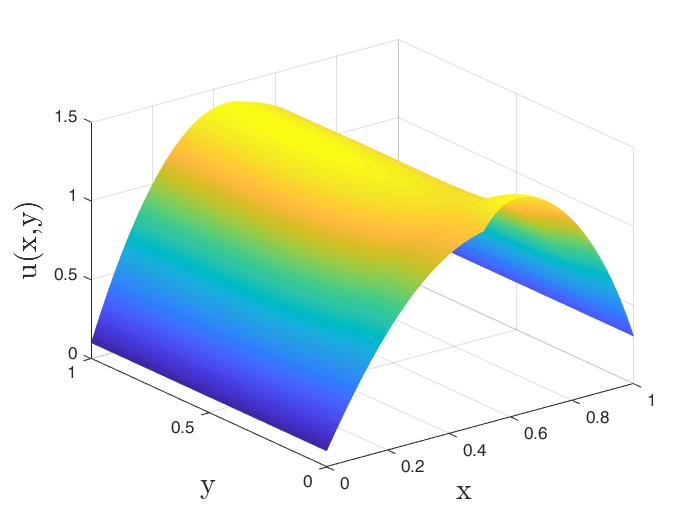}}
	\subfigure{\includegraphics[scale=0.14]{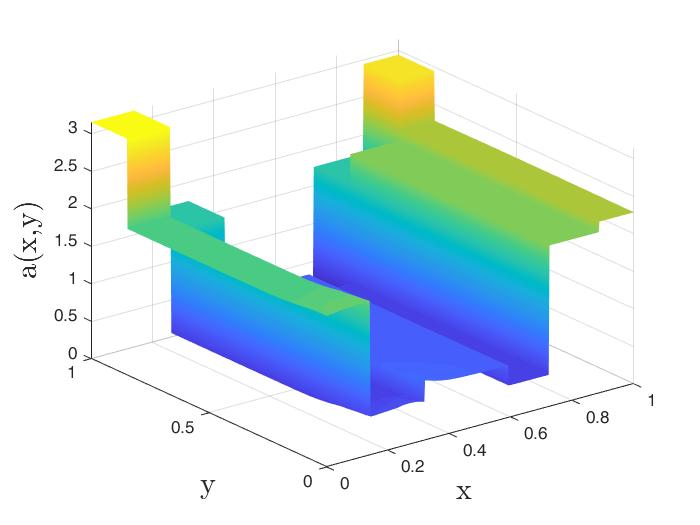}}
	\subfigure{\includegraphics[scale=0.14]{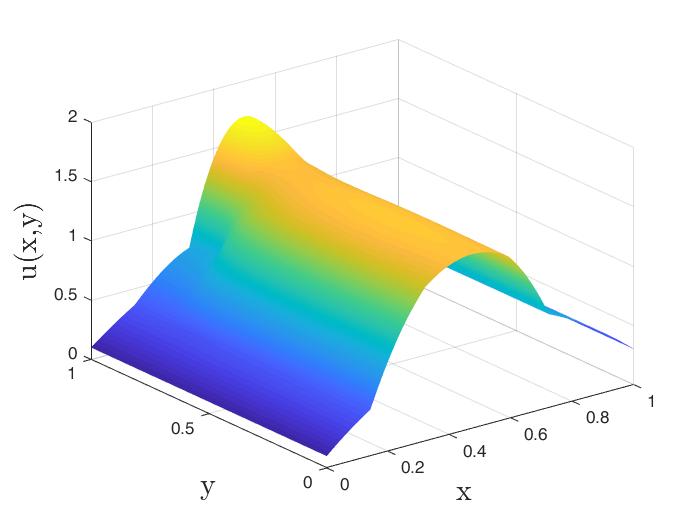}}
    \caption{Different samples of the diffusion coefficient with Poisson($1$)-subordinators and the corresponding PDE solutions with mixed Dirichlet-Neumann boundary conditions.}\label{fig:poisson1_samples}
    \end{figure}
The FE discretization parameters are given by $h_\ell = 0.4\cdot 2^{-(\ell-1)}$ for $l=1,...,7$. We set $u_{ref}=\hat{u}_{7,\varepsilon_W,\varepsilon_l}$, where the approximation parameters $\varepsilon_W$ and $\varepsilon_l$ are choosen according to \eqref{EQ:RuleApprParams} and we compute $M=100$ samples to estimate the strong error by the Monte Carlo estimator (see \eqref{EQ:MCStrongErr}). In this experiment we investigate the strong error convergence rate for the sample-adapted FE approach as well as convergence rate for the non-adapted FE approach (see Section \ref{sec:approx_solution}). In  Subsection \ref{Subsubsec:TheTwoApprMethods} we described two approaches to simulate Poisson subordinators. We run this experiment with both approaches: first, we approximate the Poisson process via sampling on an equidistant level-dependent grid and, in a second run of the experiment, we simulate the Poisson subordinators exactly using the Uniform Method described in \cite[Section 8.1.2]{LevyProcessesInFinance}.
 The convergence results for the both approaches for this experiment are given in the Figure~\ref{Fig:NumExPoiss1MixedConv}.

 	\begin{figure}[ht]
	\centering
	\subfigure{\includegraphics[scale=0.49]{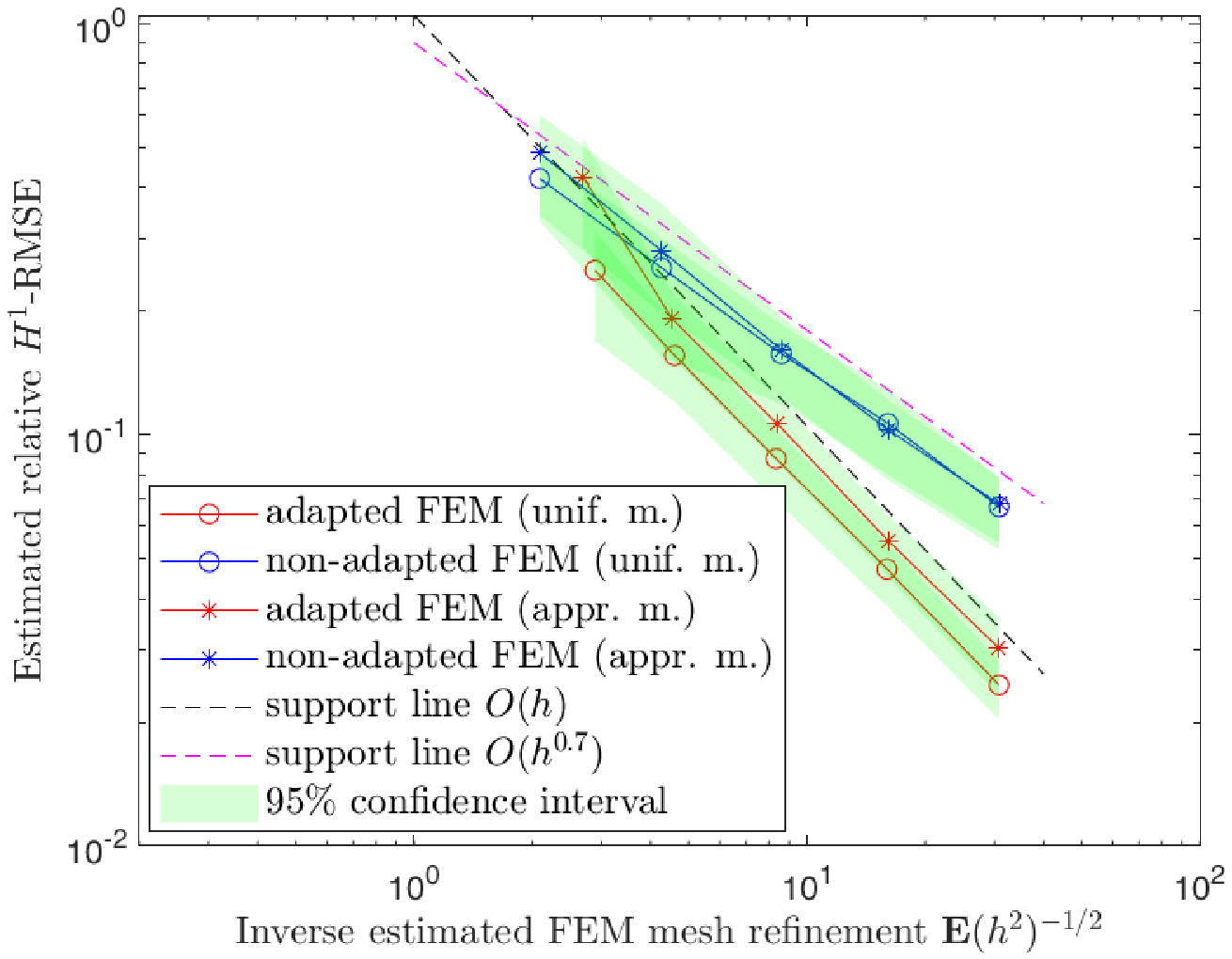}}
	\subfigure{\includegraphics[scale=0.49]{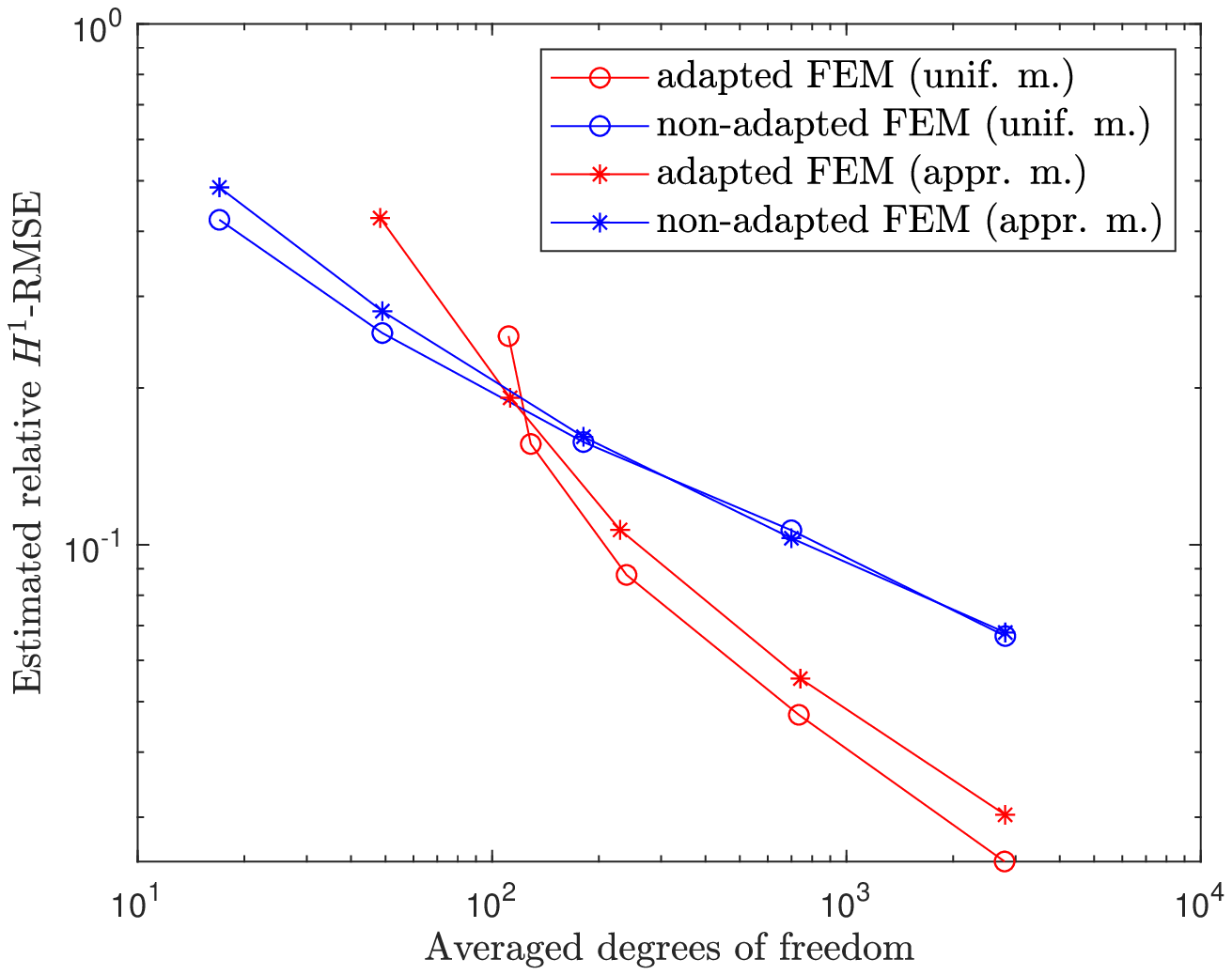}}
     \caption{Convergence results for Poisson($1$)-subordinators using the approximation approach and the Uniform Method with mixed Dirichlet-Neumann boundary conditions.}\label{Fig:NumExPoiss1MixedConv}
    \end{figure}

We see a convergence rate of approximately $0.7$ for the standard FEM discretization and full order convergence ($\hat{\kappa}_a\approx 1$) for the sample-adapted approach. On the right hand side of Figure \ref{Fig:NumExPoiss1MixedConv} on sees that the sample-adapted approach is more efficient in terms of computational effort if we consider the error-to-(averaged)DOF-plot. Only on the first level the standard FEM approach seems to be more efficient (pre-asymptotic behaviour).
If we compare the results for the approximation method with the Uniform Method (see \ref{Subsubsec:TheTwoApprMethods}), we find that, while the convergence rates are the same, the constant of the error in the sample-adapted approach is slightly smaller for the Uniform Method. This shift is exactly the additional error resulting from an approximation of the subordinators in the approximation approach. We also see that, compared to the approximation approach, on the lower levels the averaged degrees of freedom in the sample-adapted FEM approach is slightly higher if we simulate the Poisson subordinators exactly. This is caused by the fact that in this case we do not approximate the discontinuities of the field which are generated by the Poisson processes. This results in a higher average number of degrees on freedom on the lower levels because discontinuities are more likely close to each other.  
\subsubsection{Poisson subordinators: low intensity and homogeneous Dirichlet boundary conditions}
\label{subsubsec:NumExPoiss1Dir}	
Next, we consider the elliptic PDE under homogeneous Dirichlet boundary conditions. All other parameters remain as in Subsection \ref{subsubsec:NumexPoiss1Mixed}. Figure~\ref{fig:sol_Dirichlet} shows samples of the diffusion coefficient and the corresponding FE approximation of the PDE solution.

	\begin{figure}[ht]
	\centering
	\subfigure{\includegraphics[scale=0.14]{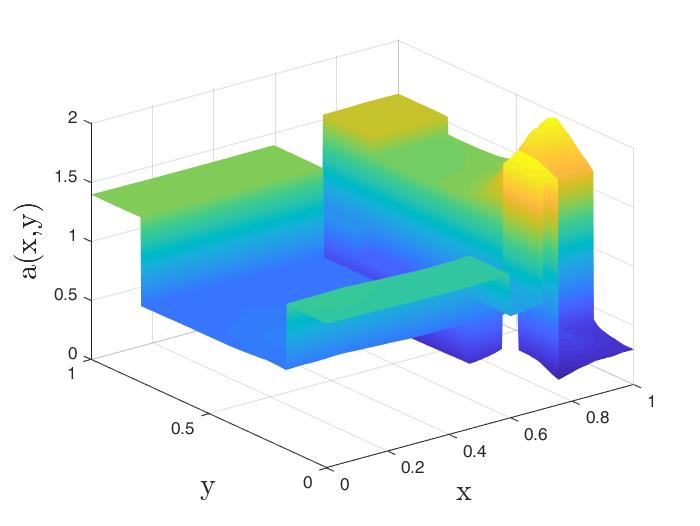}}
	\subfigure{\includegraphics[scale=0.14]{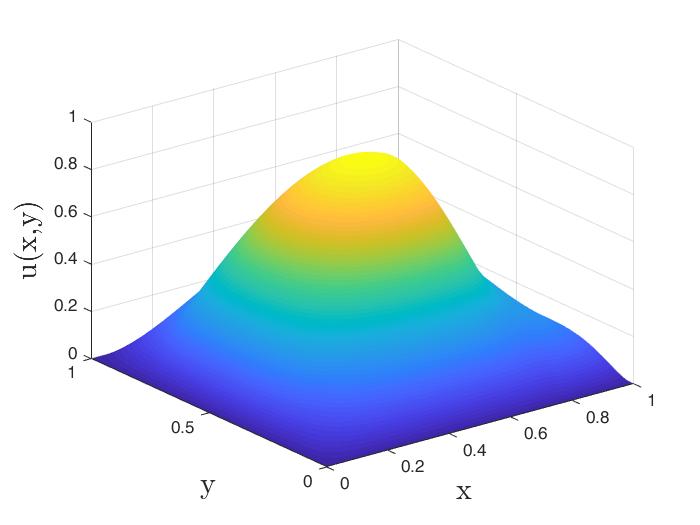}}
	\subfigure{\includegraphics[scale=0.14]{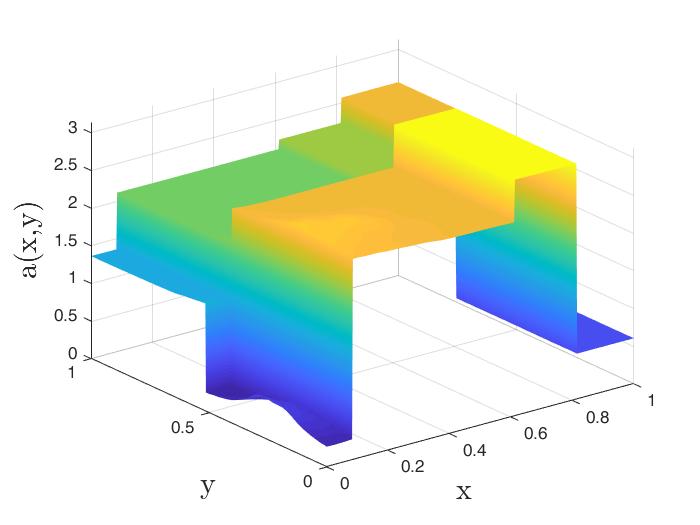}}
	\subfigure{\includegraphics[scale=0.14]{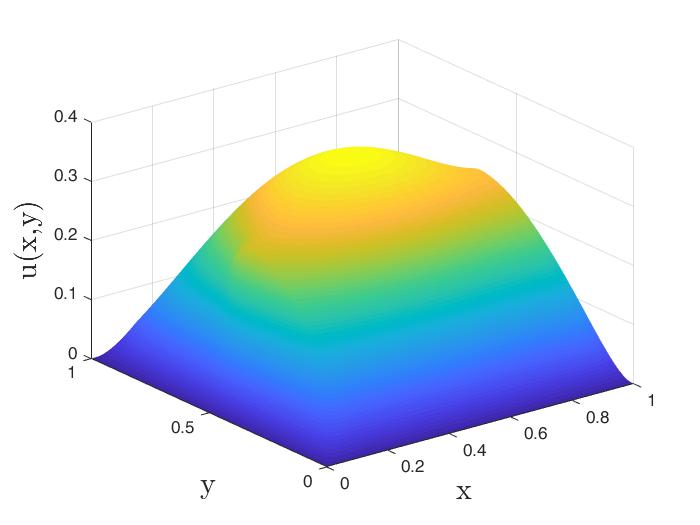}}
    \caption{Different samples of the diffusion coefficient with Poisson($1$)-subordinators and the corresponding PDE solutions with homogeneous Dirichlet boundary conditions.}\label{fig:sol_Dirichlet}
    \end{figure}
We estimate the strong error convergence rate for this problem in the same way as in the previous example using $M=250$ samples and we use the approximation approach to simulate the Poisson subordinators (see Subsection \ref{Subsubsec:TheTwoApprMethods}). Convergence results are given in Figure~\ref{FIG:NuMExPoiss1Dir}.

  	\begin{figure}[ht]
	\centering
	\subfigure{\includegraphics[scale=0.49]{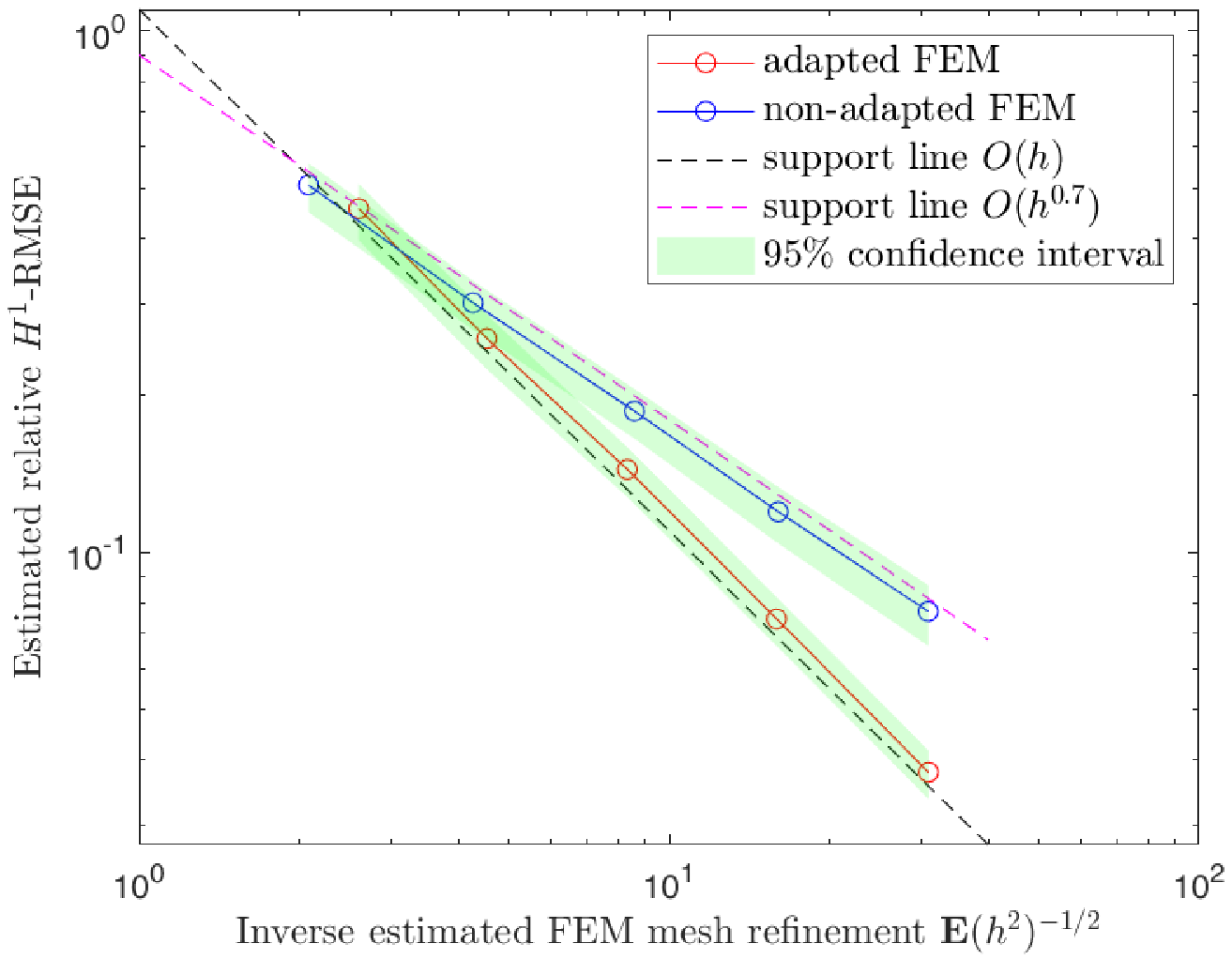}}
	\subfigure{\includegraphics[scale=0.49]{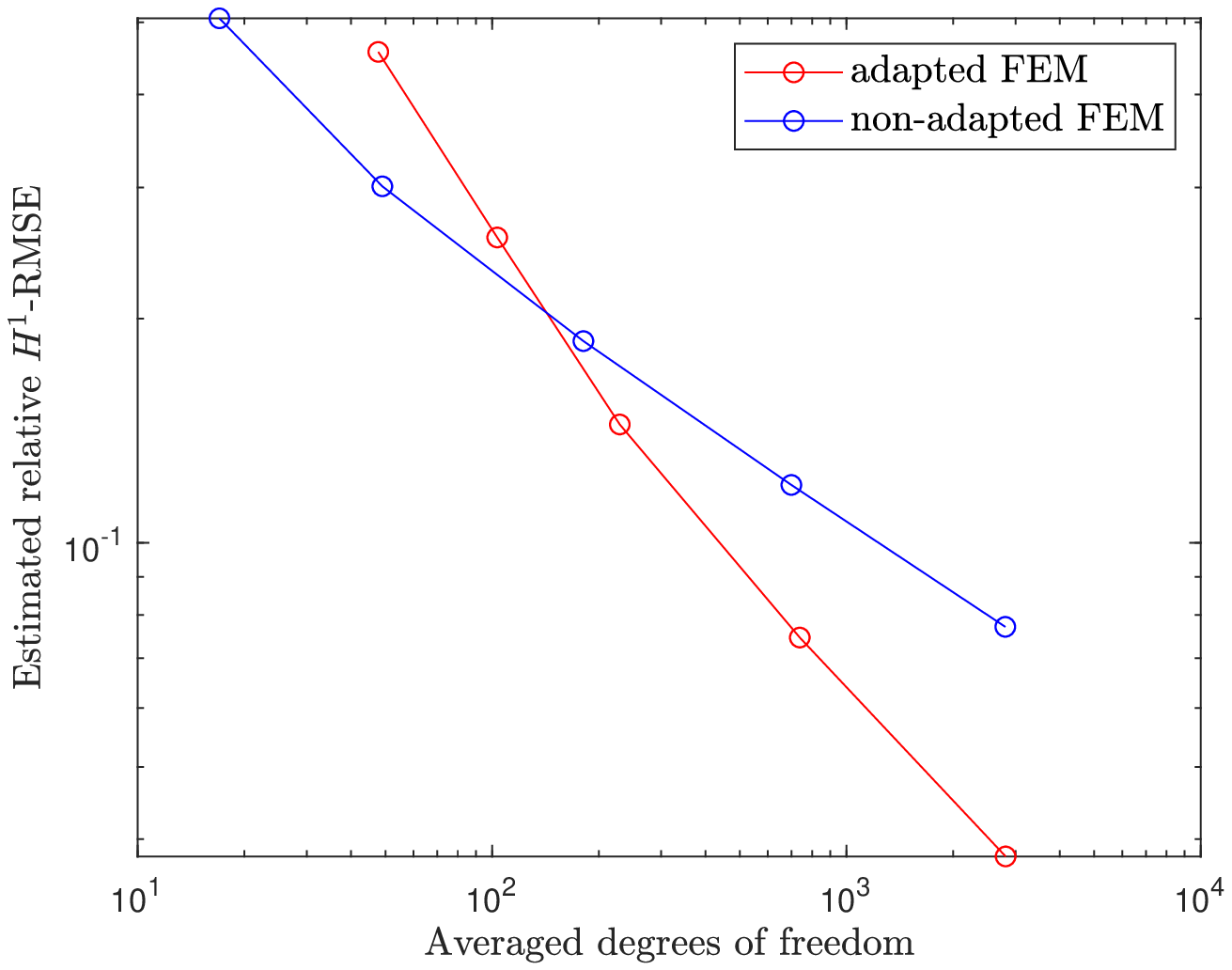}}
    \caption{Convergence results for Poisson($1$)-subordinators using the approximation approach and homogeneous Dirichlet boundary conditions.}
    \label{FIG:NuMExPoiss1Dir}
    \end{figure}

As in the experiment with mixed Dirichlet-Neumann boundary conditions we obtain convergence order of $\kappa_a\approx 0.7$ for the standard FEM approach and full order convergence for the sample-adapted approach. Also in case of homogeneous Dirichlet boundary conditions the sample-adapted FEM is more efficient in terms of the averaged number of degrees of freedom.

\subsubsection{Poisson subordinators: high intensity and mixed boundary conditions}		
\label{subsubsec:NumExPoiss5Smoothgrf}
In this section we want to consider subordinators with higher intensity, resulting in a higher number of discontinuities in the diffusion coefficient. Therefore, we consider $l_1$ and $l_2$ to be Poisson($5$) processes.

We set the cut-off level $K$ of the subordinators in Equation \eqref{EQ:DiffCoeffDefiCut} to $K=15$. For this choice it is reasonable to expect that this cut-off has no numerical influence since
\begin{align*}
 \mathbb{P}(\underset{t\in[0,1]}{\sup} l_j(t) \geq 15) = \mathbb{P}(l_j(1)\geq 15)\approx 6.9008e^{- 05},
\end{align*}
for $j=1,2$. However, setting $K=15$ means that we have to simulate the GRF $W_2$ on the domain $[0,15]^2$ which would be time consuming. Therefore, we set $K=1$ instead and consider the downscaled processes
\begin{align*}
\tilde{l}_j(t) = \frac{1}{15}l_j(t),
\end{align*}
for $t\in [0,1]$ and $j=1,2$. The variance parameter of the field $W_1$ is chosen to be $\sigma_1=1$ and the parameters of the GRF $W_2$ are set to be $\sigma_2=0.3$ and $r_2=0.5$. Figure~\ref{FIG:SamplesPoiss5SmoothGRF} shows samples of the coefficient and the corresponding pathwise FEM solution.

	\begin{figure}[ht]
	\centering
	\subfigure{\includegraphics[scale=0.14]{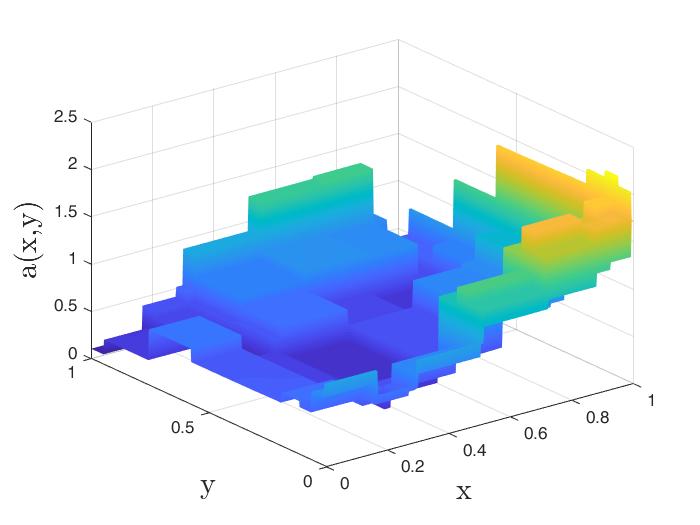}}
	\subfigure{\includegraphics[scale=0.14]{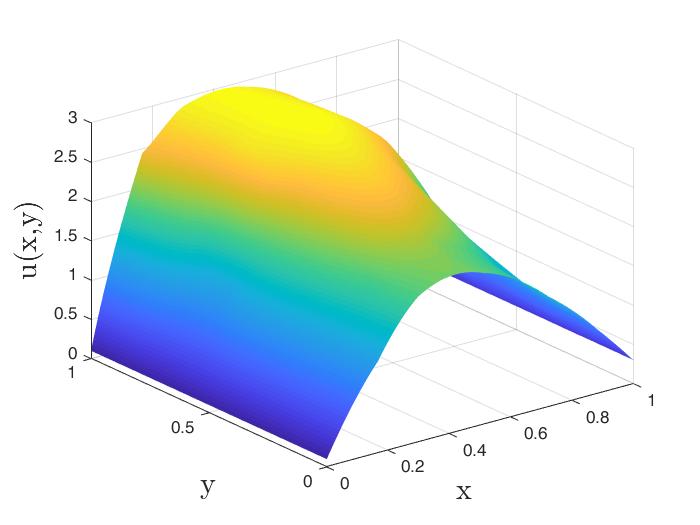}}
	\subfigure{\includegraphics[scale=0.14]{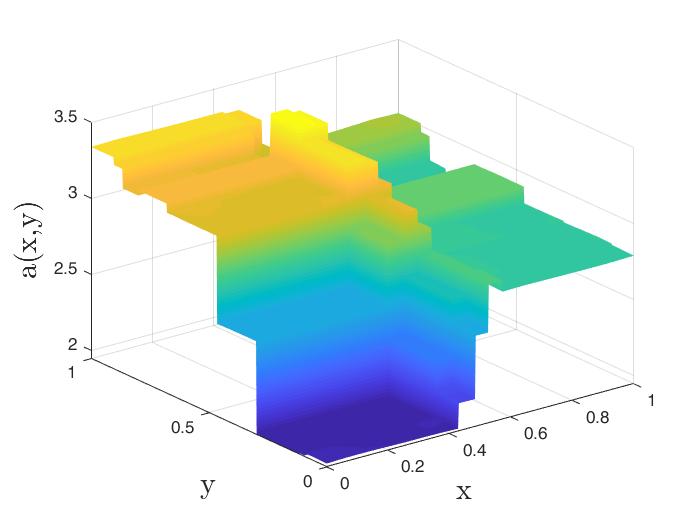}}
	\subfigure{\includegraphics[scale=0.14]{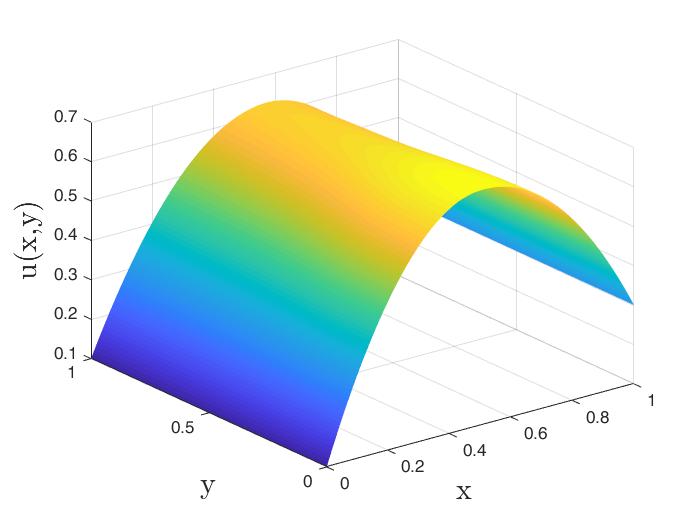}}
    \caption{Different samples of the diffusion coefficient with Poisson($5$)-subordinators (top) and the corresponding PDE solutions with mixed Dirichlet-Neumann boundary conditions (bottom).}
    \label{FIG:SamplesPoiss5SmoothGRF}
    \end{figure}

As in the first experiment, we again run this experiment using both methods described in Subsection \ref{Subsubsec:TheTwoApprMethods}: the approximation approach using Poisson-distributed increments and the Uniform Method. We use the discretization steps $h_\ell = 0.1\cdot 1.7^{-(\ell-1)}$ for $\ell = 1,\dots,7$ and $M=150$ samples.

	\begin{figure}[ht]
	\centering
	\subfigure{\includegraphics[scale=0.49]{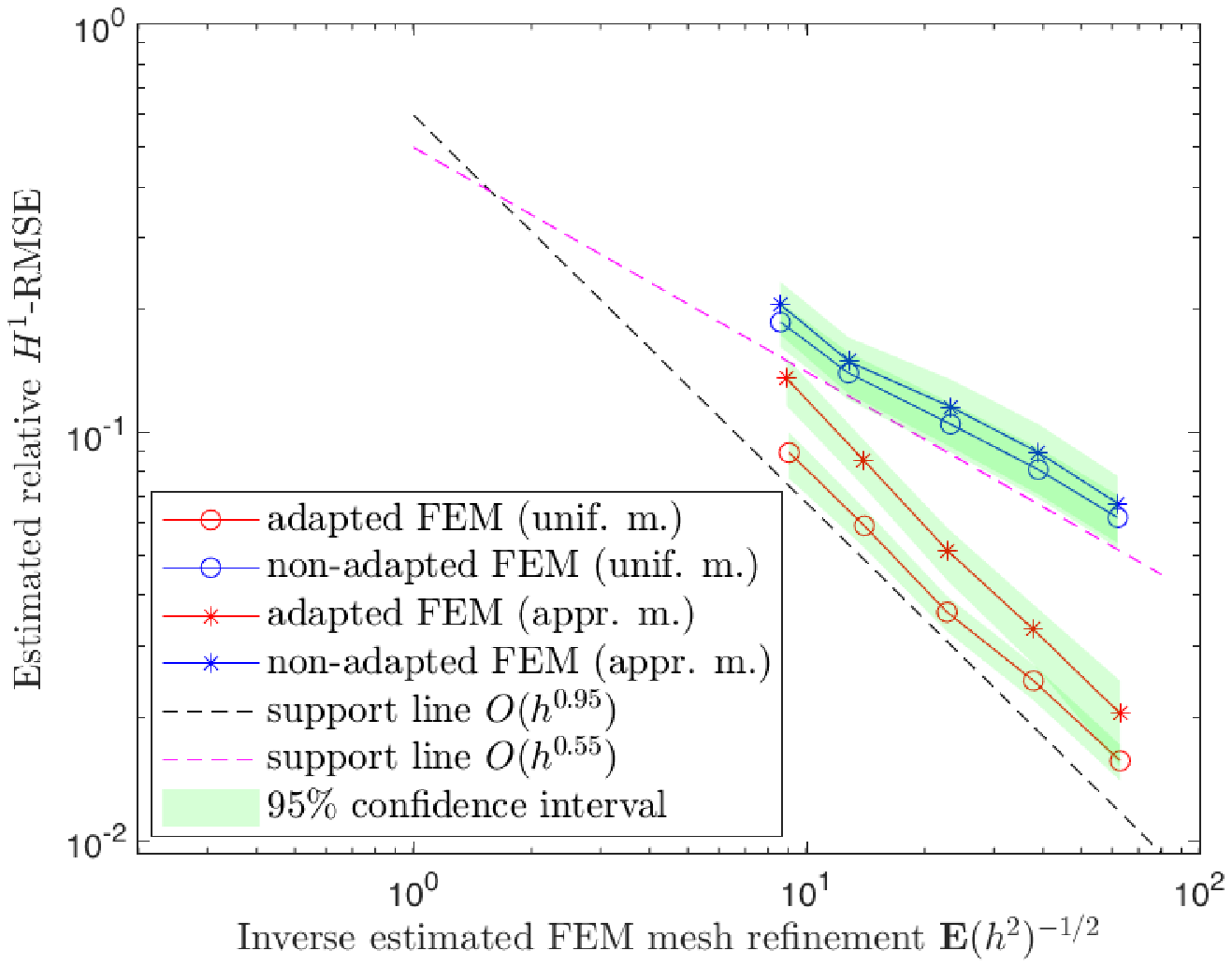}}
	\subfigure{\includegraphics[scale=0.49]{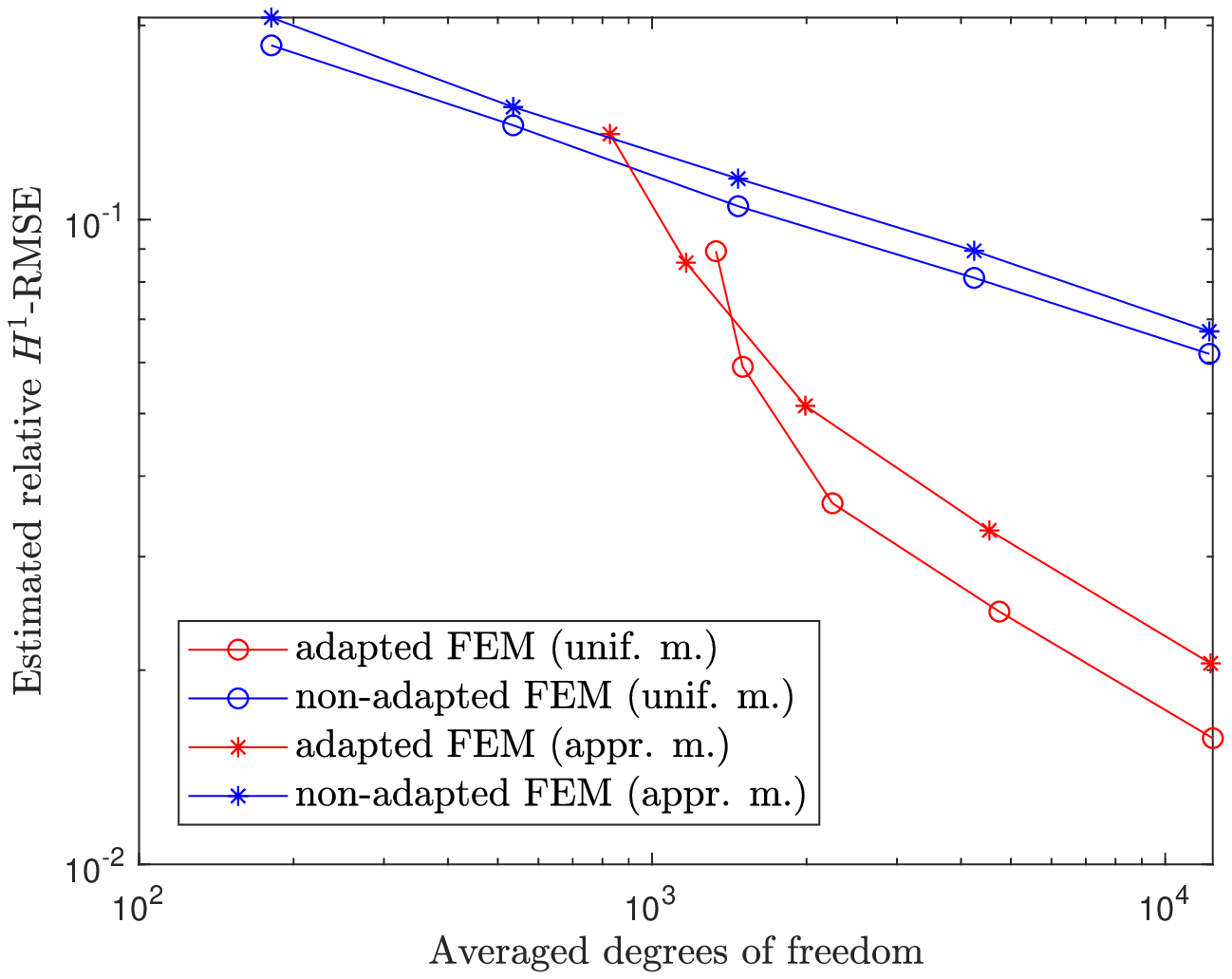}}
    \label{FIG:NumExPoiss5SmoothGRF}
    \caption{Convergence results for Poisson($5$)-subordinators using the approximation approach and the Uniform Method with mixed Dirichlet-Neumann boundary conditions.}
    \end{figure}

In Figure~\ref{FIG:NumExPoiss5SmoothGRF} we see that we get almost full order convergence for the sample-adapted FE method for both approximation approaches of the Poisson processes. Compared to the low-intensity examples with Poisson($1$)-subordinators given in Subsection \ref{subsubsec:NumexPoiss1Mixed} and \ref{subsubsec:NumExPoiss1Dir}, we get a slightly lower convergence rate of approximately $0.55$ for the standard FEM approach. This holds for both approximation methods of the Poisson subordinators. Hence, we see that the way how the Poisson-subordinators are simulated seems to have no effect on the convergence rate. 

\subsubsection{Poisson subordinators of a GRF with short correlation length: high intensity and mixed boundary conditions}	
\label{subsubsec:NumExPoiss5RoughGRF}

In our construction of the jump-diffusion coefficient, the jumps are generated by the subordinated GRF. To be precise, the number of spatial jumps is determined by the subordinators and the jump intensities (in terms of the differences in height between the jumps) are essentially determined by the GRF $W_2$. This fact allows to control the jump intensities of the diffusion coefficient by the correlation parameter of the underlying GRF $W_2$. In the following experiment we want to investigate the influence of the jump intensities of the diffusion coefficient on the convergence rates.

In Subsection \ref{subsubsec:NumExPoiss5Smoothgrf} we subordinated a Mat\'ern-1.5-GRF with pointwise standard deviation $\sigma_2= 0.3$ and a correlation length of $r_2=0.5$. In the following experiment we set the standard deviation of the GRF $W_2$ to $\sigma_2=0.5$ and the correlation length to $r_2=0.1$ and leave all the other parameters unchanged. Figure~\ref{FIG:SmoothRoughGRFComparison} compares the resulting GRF with the field $W_2$ with parameters $\sigma_2=0.3$ and $r_2=0.5$ which we used in Subsection \ref{subsubsec:NumExPoiss5Smoothgrf}. 

	\begin{figure}[ht]
	\centering
	\subfigure{\includegraphics[scale=0.14]{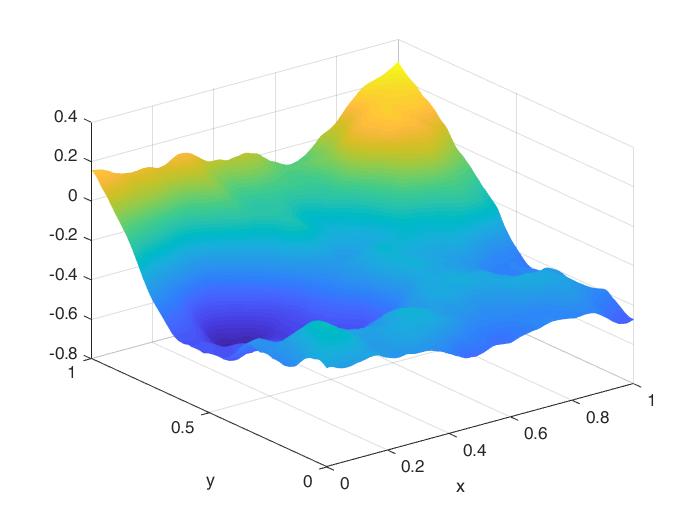}}
	\subfigure{\includegraphics[scale=0.14]{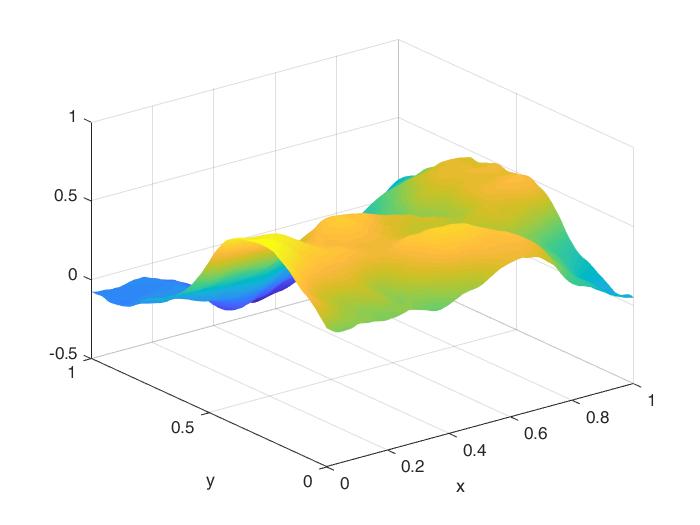}}
	\subfigure{\includegraphics[scale=0.14]{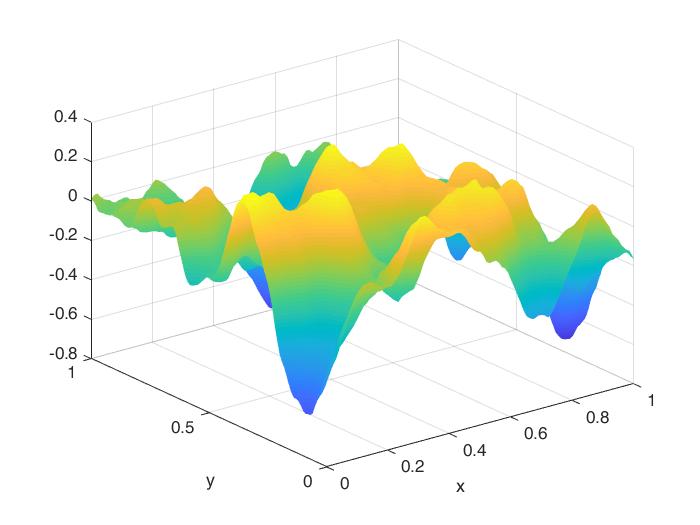}}
	\subfigure{\includegraphics[scale=0.14]{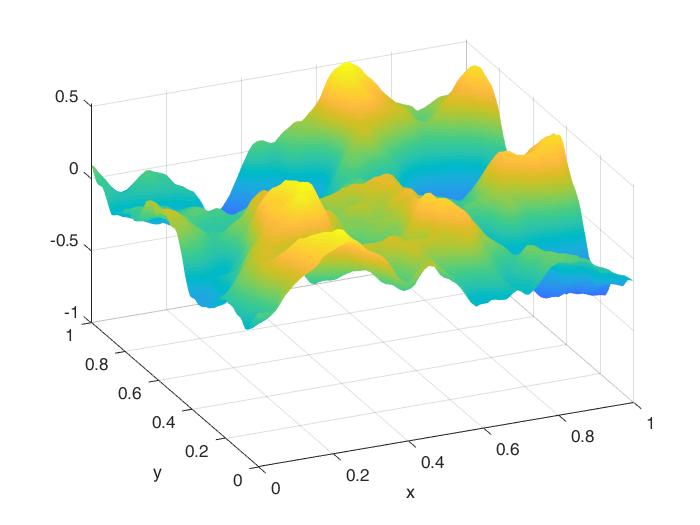}}
    \caption{Samples of a Mat\'ern-1.5-GRF with $\sigma_2=0.3$, $r_2=0.5$ (left) and with parameters $\sigma_2=0.5$, $r_2=0.1$ (right).}
    \label{FIG:SmoothRoughGRFComparison}
    \end{figure}

Subordinating the GRF with small correlation length (right plots in Figure \ref{FIG:SmoothRoughGRFComparison}) result in higher jump intensities in the diffusion coefficient as the subordination of the GRF with higher correlation length (left plots in Figure \ref{FIG:SmoothRoughGRFComparison}). Figure~\ref{fig:poisson5samples} shows samples of the diffusion coefficient and the corresponding PDE solutions where the parameters of $W_2$ are $\sigma_2=0.5$ and $r_2=0.1$. 

	\begin{figure}[ht]
	\centering
	\subfigure{\includegraphics[scale=0.14]{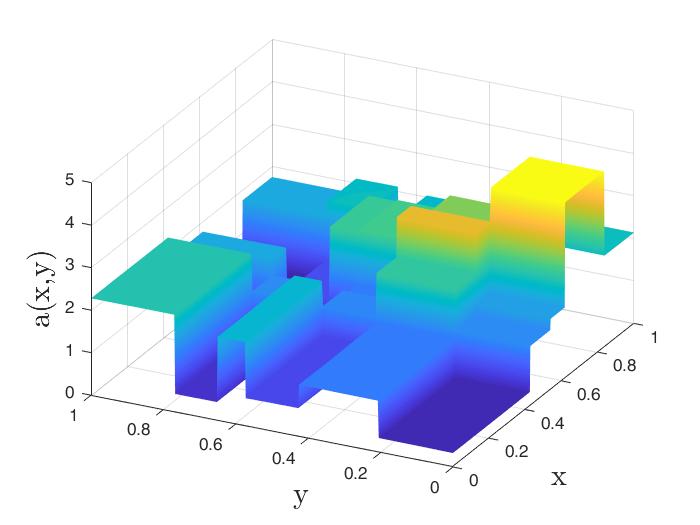}}
	\subfigure{\includegraphics[scale=0.14]{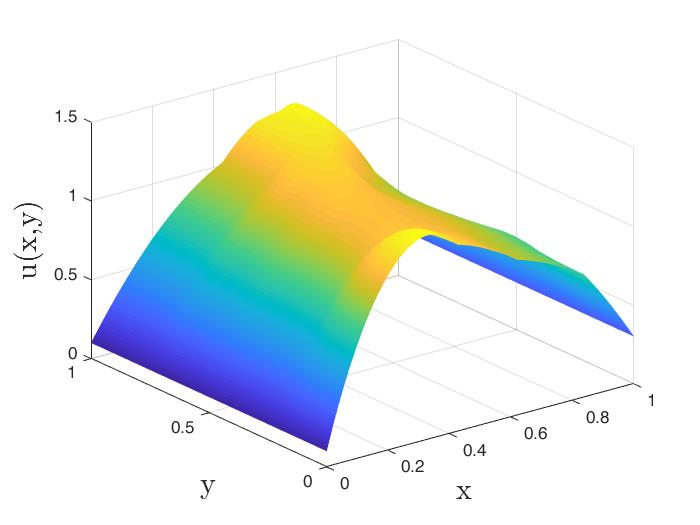}}
	\subfigure{\includegraphics[scale=0.14]{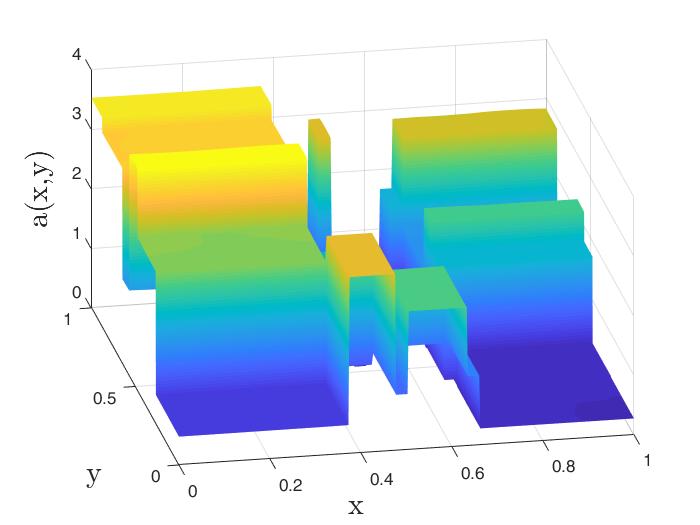}}
	\subfigure{\includegraphics[scale=0.14]{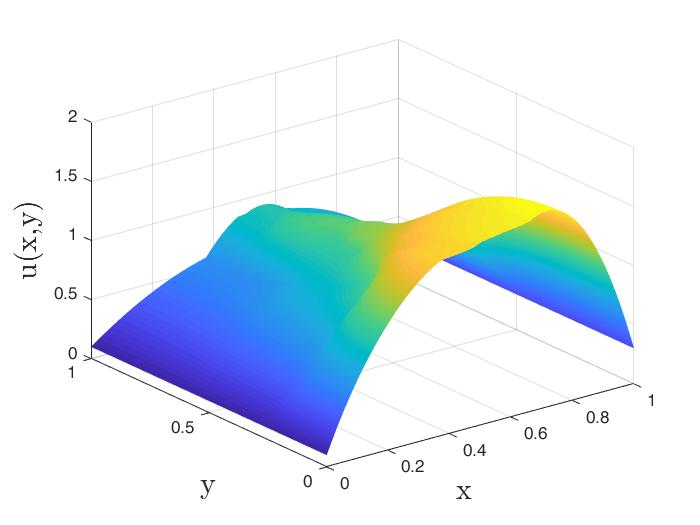}}
    \caption{Different samples of the diffusion coefficient with Poisson($5$)-subordinators and the corresponding PDE solutions with mixed Dirichlet-Neumann boundary conditions and small correlation length $r_2=0.1$ of the GRF $W_2$.}\label{fig:poisson5samples}
    \end{figure}

As expected, the resulting jump coefficient shows jumps with a higher intensity compared to the jump coefficient in the previous experiment where we used the GRF $W_2$ with parameters $\sigma_2=0.3$ and $r_2=0.5$ (see Figure \ref{FIG:SamplesPoiss5SmoothGRF}). 

We estimate the strong error convergence rate using this high-intensity jump coefficient using $M=200$ samples and approximate the Poisson subordinators by the Uniform Method.

  	\begin{figure}[ht]
	\centering
	\subfigure{\includegraphics[scale=0.49]{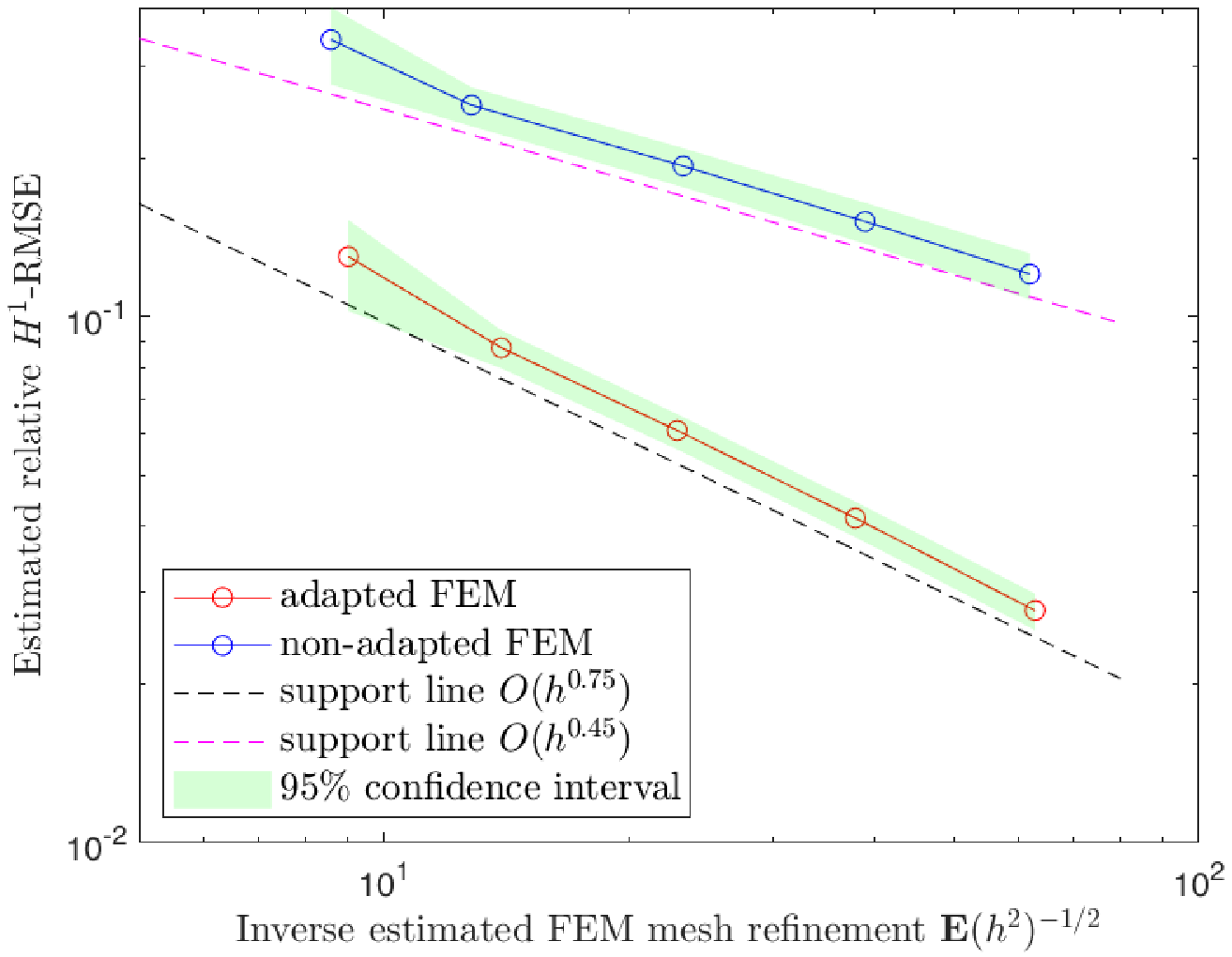}}
	\subfigure{\includegraphics[scale=0.49]{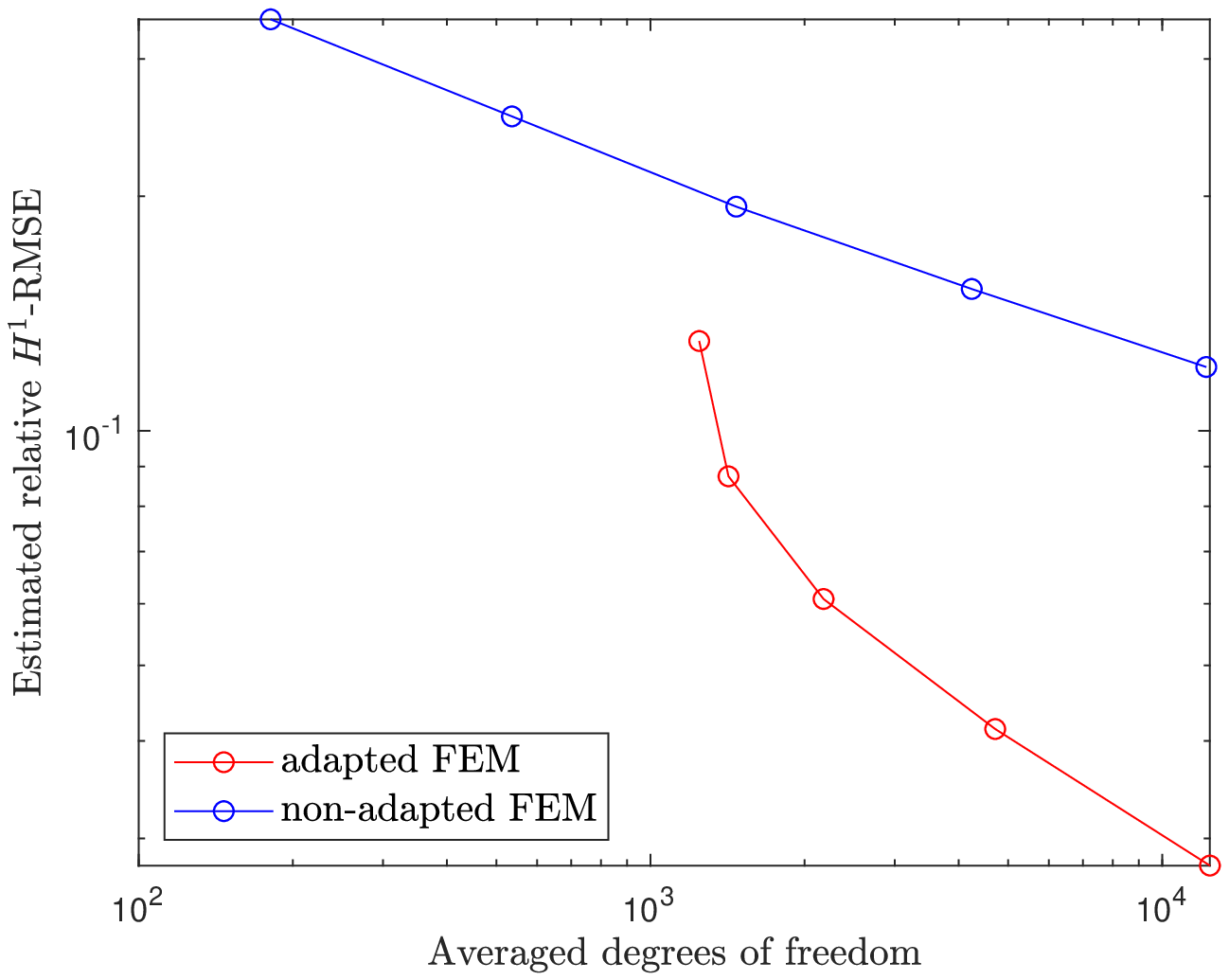}}
    \caption{Convergence results for Poisson($5$)-subordinators using the Uniform Method with mixed Dirichlet-Neumann boundary conditions and GRF parameters $\sigma_2=0.5$ and $r_2=0.1$.}
    \label{FIG:NumExPoiss5RoughGRF}
    \end{figure}

Figure~\ref{FIG:NumExPoiss5RoughGRF} shows that for the GRF $W_2$ with small correlation length the convergence rates are reduced for both approaches: the standard FEM approach and the sample-adapted version. We cannot preserve full order convergence in the sample-adapted FEM but observe a convergence rate of approximately $0.75$. In the non-adapted approach we obtain a convergence rate of approximately $0.45$.  Looking at the error-to-(averaged)DOF-plot on the right hand side of Figure \ref{FIG:NumExPoiss5RoughGRF} we see that still the sample-adapted approach is by a large margin more efficient in terms of computational effort. This experiment confirms our expectations since the FEM convergence rate has been shown to be strongly influenced by the regularity of the jump-diffusion coefficient (see e.g. \cite{AStudyOfElliptic} and  \cite{RegularityResultsForLaplaceInterfaceProblemsInTroDimensions}).

\subsection{Gamma subordinators}	
	
In order to also consider L\'evy subordinators with infinite activity we take Gamma processes to subordinate the GRF in the remaining numerical examples. We set the standard deviation of the GRF $W_1$ to be $\sigma_1=1.5$ and we choose $\sigma_2=0.3$ and $r_2=1$ for the Mat\'ern-1.5-GRF $W_2$ and leave the other parameters unchanged. 
For $a_G,b_G>0$, a $Gamma(a_G,b_G)$-distributed random variable admits the density function
\begin{align*}
 x\mapsto \frac{b_G^{a_G}}{\Gamma(a_G)}x^{a_G-1}\exp(-xb_G),~\text{ for }x>0,
\end{align*}
where $\Gamma(\cdot)$ denotes the Gamma function. A Gamma process $(X_t, t\geq 0)$ has independent Gamma distributed increments. Being precise, $X_t-X_s\sim Gamma(a_G\cdot(t-s),b_G)$ for $0<s<t$ (see~\cite[Chapter 8]{LevyProcessesInFinance}). 

The following lemma is essential to approximate the Gamma processes.
\begin{lemma}\label{LE:MomentsGamma}
Let $Z$ be a \textit{Gamma}$(a_G,b_G)$ distributed random variable for positive parameters $a_G,b_G>0$. It holds
\begin{align*}
\mathbb{E}(Z^n)=b_G^{-n} \frac{\Gamma(a_G+n)}{\Gamma(a_G)},
\end{align*}
for all $n\in \mathbb{N}_0$.
\end{lemma}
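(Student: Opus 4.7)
The plan is a direct computation using the density of the Gamma distribution together with the integral representation of the Gamma function. Since $Z$ admits the density $x \mapsto \frac{b_G^{a_G}}{\Gamma(a_G)} x^{a_G - 1} \exp(-x b_G)$ on $(0,\infty)$, I would simply write
\begin{align*}
\mathbb{E}(Z^n) = \int_0^\infty x^n \, \frac{b_G^{a_G}}{\Gamma(a_G)} \, x^{a_G - 1} \exp(-x b_G) \, dx = \frac{b_G^{a_G}}{\Gamma(a_G)} \int_0^\infty x^{a_G + n - 1} \exp(-x b_G) \, dx,
\end{align*}
and then reduce the remaining integral to a Gamma function.

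The key step is the substitution $u = x b_G$, so that $du = b_G \, dx$ and $x^{a_G + n - 1} = (u/b_G)^{a_G + n - 1}$. This transforms the integral to
\begin{align*}
\int_0^\infty x^{a_G + n - 1} \exp(-x b_G) \, dx = b_G^{-(a_G + n)} \int_0^\infty u^{a_G + n - 1} e^{-u} \, du = b_G^{-(a_G + n)} \, \Gamma(a_G + n),
\end{align*}
where in the last equality I use the standard integral definition of the Gamma function, $\Gamma(z) = \int_0^\infty u^{z-1} e^{-u} \, du$, which is valid since $a_G + n > 0$.

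Plugging this back yields $\mathbb{E}(Z^n) = \frac{b_G^{a_G}}{\Gamma(a_G)} \cdot b_G^{-(a_G + n)} \, \Gamma(a_G + n) = b_G^{-n} \, \frac{\Gamma(a_G + n)}{\Gamma(a_G)}$, which is the claim. There is no genuine obstacle here: the whole argument is one change of variables, and the only thing to check is that Fubini/absolute integrability issues do not arise, which is trivial because the integrand is nonnegative and $n \in \mathbb{N}_0$ ensures $a_G + n > 0$ so the Gamma integral converges. The case $n = 0$ recovers the normalization $\mathbb{E}(1) = 1$ as a sanity check.
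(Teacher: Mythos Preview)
Your proof is correct. It differs from the paper's, which proceeds by induction on $n$: the base case $n=0$ is trivial, and the inductive step uses integration by parts on $\int_0^\infty x^{n+a_G}\exp(-xb_G)\,dx$ to obtain the recursion $\mathbb{E}(Z^{n+1}) = \frac{n+a_G}{b_G}\,\mathbb{E}(Z^n)$, from which the claim follows via the functional equation $\Gamma(z+1)=z\,\Gamma(z)$. Your route is more direct and in fact slightly more general, since the substitution argument works verbatim for any real exponent $n > -a_G$, whereas the inductive proof is tied to $n\in\mathbb{N}_0$. The paper's approach, on the other hand, avoids appealing to the integral representation of $\Gamma$ at a shifted argument and makes the recursive structure of the moments explicit.
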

\begin{proof}
We prove the assertion by induction. The case $n=0$ is trivial. For an arbitrary natural number $n\geq 0$ we calculate
\begin{align*}
\mathbb{E}(Z^{n+1})&=\frac{b_G^{a_G}}{\Gamma(a_G)}\int_0^\infty x^{n+a_G}\exp(-xb_G)dx\\
&=-\frac{1}{b_G}\frac{b_G^{a_G}}{\Gamma(a_G)} \Big( x^{n+a_G}\exp(-xb_G)|_0^\infty - (n+a_G)\int_0^\infty x^{n+a_G-1}\exp(-xb_G)dx \Big)\\
&=\frac{n+a_G}{b_G}\mathbb{E}(Z^n) \\
&=b_G^{-(n+1)} \frac{\Gamma(a_G+n+1)}{\Gamma(a_G)},
\end{align*}
where we used the Theorem of Bohr-Mollerup (see \cite{EunfuehrungInDieTheorieDerGammafunktion}) in the last step.
\end{proof}

In our numerical experiments we choose $l_j$ to be a $Gamma(4,10)$-process for $j=1,2$. Since increments of a Gamma process are Gamma-distributed random variables it is straightforward to generate values of a Gamma process on grid points $(x_i)_{i=0}^{N_l}\subset [0,1]$ with $|x_{i+1}-x_i|\leq \varepsilon_l$ for $i=0,\dots,N_l-1$. 
We then use the piecewise constant extension of the simulated values $\{l_j(x_i),~i=0,\dots,N_l-1,~j=1,2\}$ to approximate the L\'evy subordinators: 
\begin{align*}
l_j^{(\varepsilon_l)}(x)=\begin{cases} l_j(x_i) & x\in[x_i,x_{i+1}) \text{ for } i=0,...,N_l-1,  \\ l_j(x_{N_l-1}) & x=1.  \end{cases}
\end{align*}
for $j=1,2$. Note that in this case Assumption~\ref{ASS:CutProblemEigenvalues} \textit{v} is fulfilled with for any fixed $\eta <+\infty$. To see that we consider a fixed $s\in\mathbb{N}$ with $s\leq \eta$ and calculate for an arbitrary $x\in[0,1)$ with $x\in[x_i,x_{i+1})$:
\begin{align*}
\mathbb{E}(|l_j(x)-l_j^{\varepsilon_l}(x)|^s)&\leq \mathbb{E}(|l_j(x_{i+1}) - l_j(x_i)|^s)\\
&\leq \mathbb{E}(|l_j(\varepsilon_l)|^s)\\
&=b_G^{-s}\frac{\Gamma(a_G\varepsilon_l + s)}{\Gamma(a\varepsilon_l)}\\
&=b_G^{-s}\prod_{i=1}^{s-1}(a_G\varepsilon_l + i) a\varepsilon_l\\
&\leq C_l\varepsilon_l.
\end{align*}

Figure~\ref{Fig:NumExGammaSmoothGRF} shows samples of the jump-diffusion coefficient with Gamma($4,10$)-subordinator and corresponding FE solution where we used mixed Dirichlet-Neumann boundary conditions.

	\begin{figure}[ht]
	\centering
	\subfigure{\includegraphics[scale=0.14]{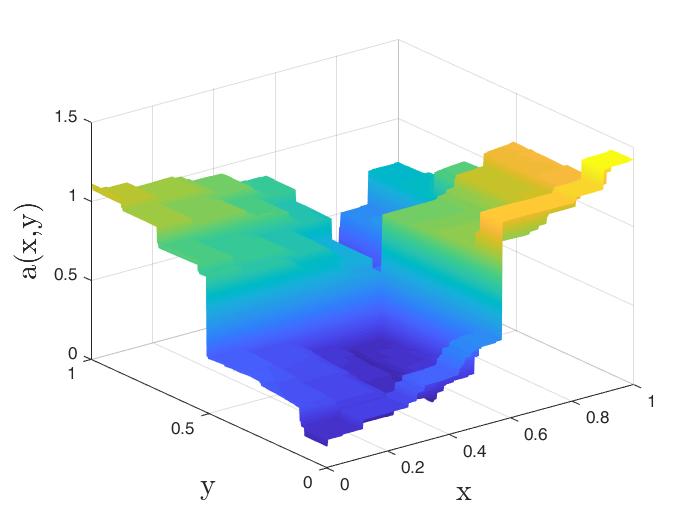}}
	\subfigure{\includegraphics[scale=0.14]{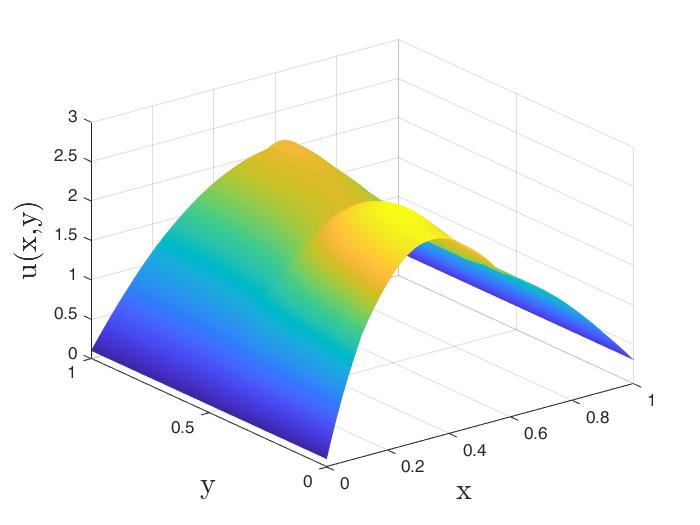}}
	\subfigure{\includegraphics[scale=0.14]{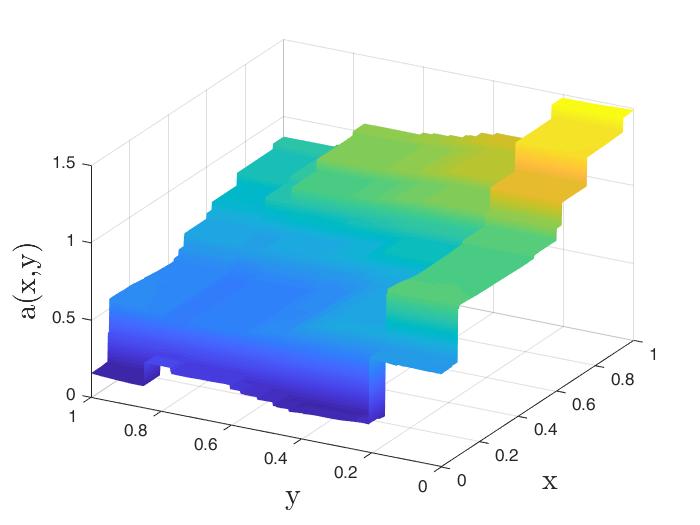}}
	\subfigure{\includegraphics[scale=0.14]{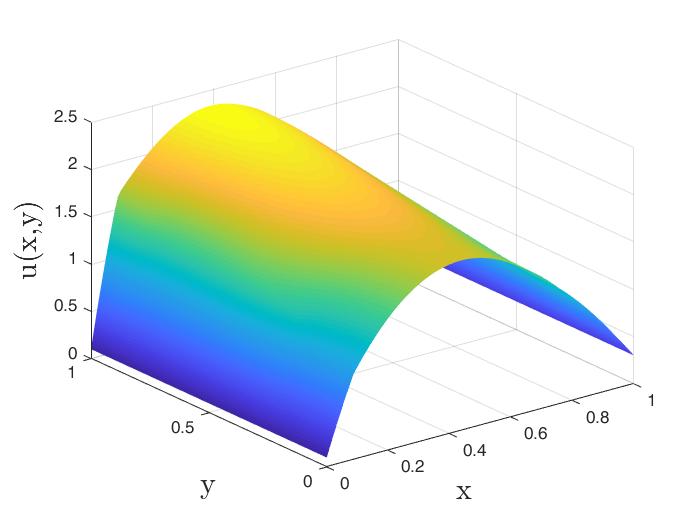}}
    \caption{Different samples of the diffusion coefficient with Gamma($4,10$)-subordinators and the corresponding PDE solutions with mixed Dirichlet-Neumann boundary conditions.}
    \label{Fig:NumExGammaSmoothGRF}
    \end{figure}

We set the diffusion cut-off to $K=2$ since in this case we obtain 
\begin{align*}
 \mathbb{P}(\underset{t\in[0,1]}{\sup} l_j(t) \geq 2) = \mathbb{P}(l_j(1)\geq 2)\approx 3.2042e^{-06},
\end{align*}
for $j=1,2$. The use of infinite-activity Gamma subordinators in the diffusion coefficient does not allow anymore for a sample-adapted approach to solve the PDE problem. Hence, we only use the standard FEM approach to solve the PDE samplewise and estimate the strong error convergence. We use $M=200$ samples to estimate the strong error on the levels $\ell = 1,\dots,5$ where we set the non-adaptive FEM solution $u_{7,\varepsilon_W,\varepsilon_l}$ on level $L=7$ to be the reference solution. We choose the FEM discretization steps to be $h_\ell = 0.4\cdot 2^{-(\ell -1)}$ for  $\ell = 1,\dots,7$.

  	\begin{figure}[ht]
	\centering
	\subfigure{\includegraphics[scale=0.5]{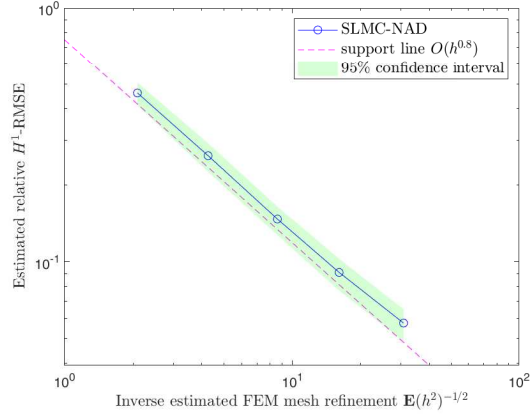}}
	\caption{Convergence results for Gamma($4,10$)-subordinators with mixed Dirichlet-Neumann boundary conditions.}\label{FIG:NumExGammaSmoothGRFConv}
    \end{figure}

Figure~\ref{FIG:NumExGammaSmoothGRFConv} shows a convergence rate of approximately $0.8$ for the standard-FEM approach. Since we do not treat the discontinuities in a special way we cannot expect full order convergence. In fact, the given convergence is comparably good since in general we cannot prove a higher convergence order than $0.5$ for the standard deterministic FEM approach without special treatment of the discontinuities (see \cite{TheFiniteElementMethodForELlipticEquationsWithDiscontinuousCoefficients} and \cite{AStudyOfElliptic}). The convergence rate of approximately $0.8$ in this example is based on the comparatively large correlation length of the underlying GRF $W_2$ (see \ref{subsubsec:NumExPoiss5RoughGRF}). 

In Subsection \ref{subsubsec:NumExPoiss5RoughGRF} we investigated the effect of a rougher diffusion coefficient on the convergence rate for Poisson($5$)-subordinators. In the following experiment we follow a similar strategy and use a shorter correlation length in the GRF $W_2$ which is  subordinated by Gamma processes. Therefore, we choose the parameters of the Mat\'ern-1.5-GRF $W_2$ to be $\sigma_2=0.3$ and $r_2=0.05$. Figure~\ref{fig:gammasamples} shows a comparison of the resulting GRFs $W_2$ with the different correlation lengths.

	\begin{figure}[ht]
	\centering
	\subfigure{\includegraphics[scale=0.13]{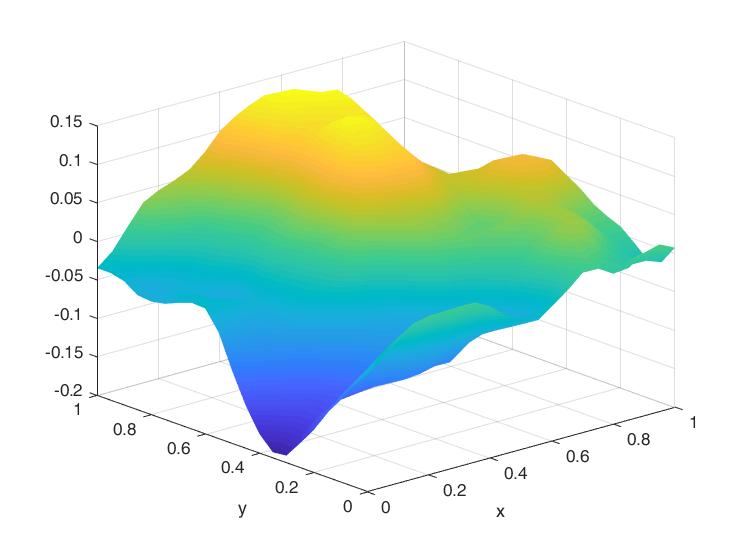}}
	\subfigure{\includegraphics[scale=0.14]{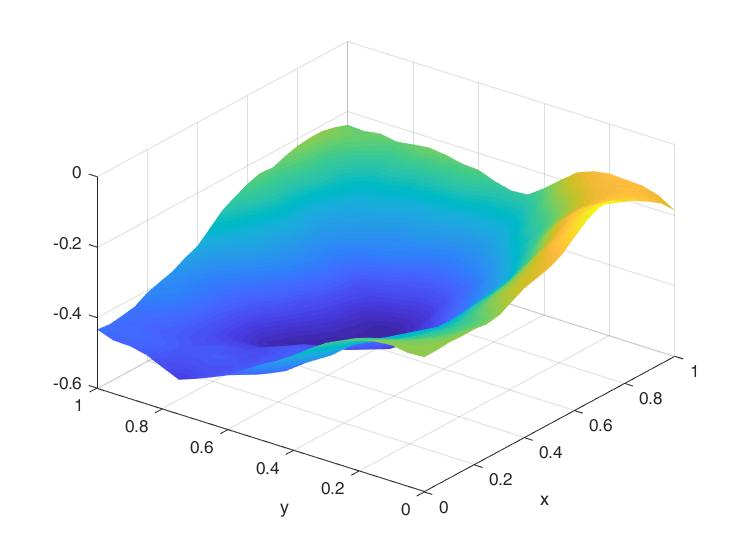}}
	\subfigure{\includegraphics[scale=0.14]{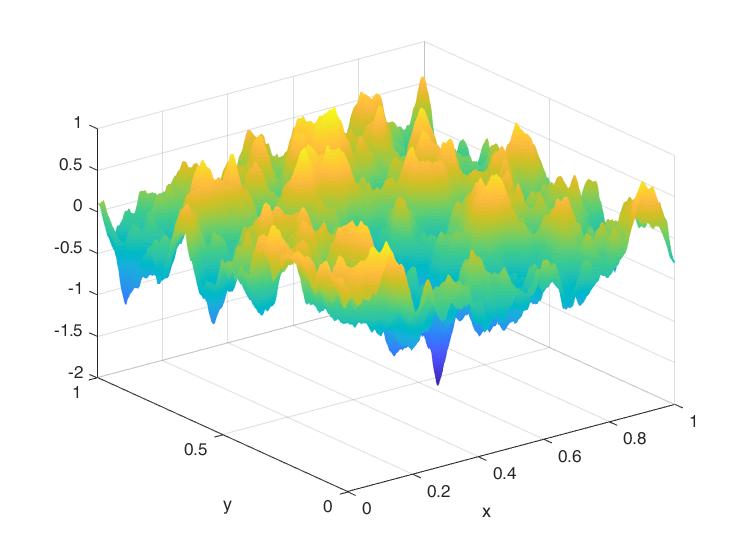}}
	\subfigure{\includegraphics[scale=0.14]{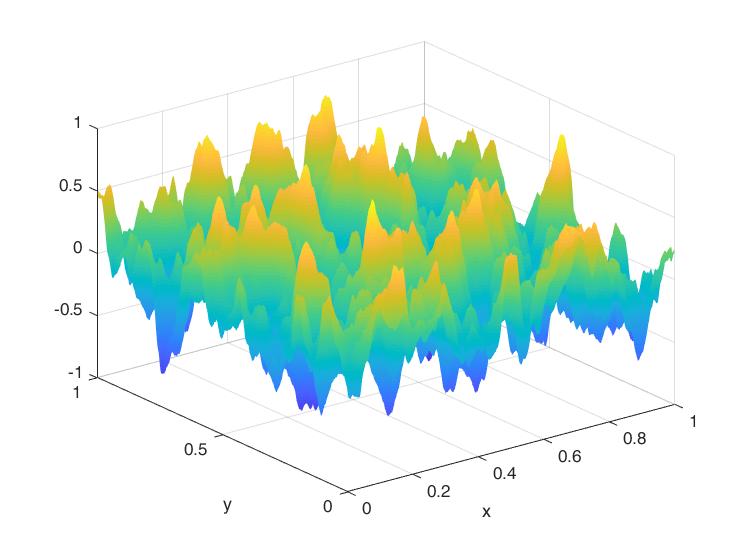}}
    \caption{Samples of a Mat\'ern-1.5-GRF with correlation length $r_2=1$ (left) and with correlation length $r_2=0.05$ (right).}\label{fig:gammasamples}
    \end{figure}

In Figure~\ref{fig:gammasolution}, the GRF with small correlation length results in higher jumps of the diffusion coefficient and stronger deformations of the corresponding PDE solution compared to the previous example (see Figure \ref{Fig:NumExGammaSmoothGRF}).

	\begin{figure}[ht]
	\centering
	\subfigure{\includegraphics[scale=0.14]{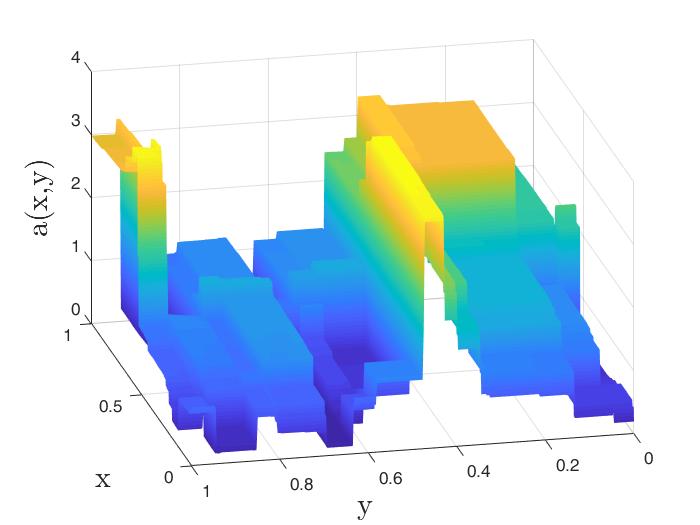}}
	\subfigure{\includegraphics[scale=0.14]{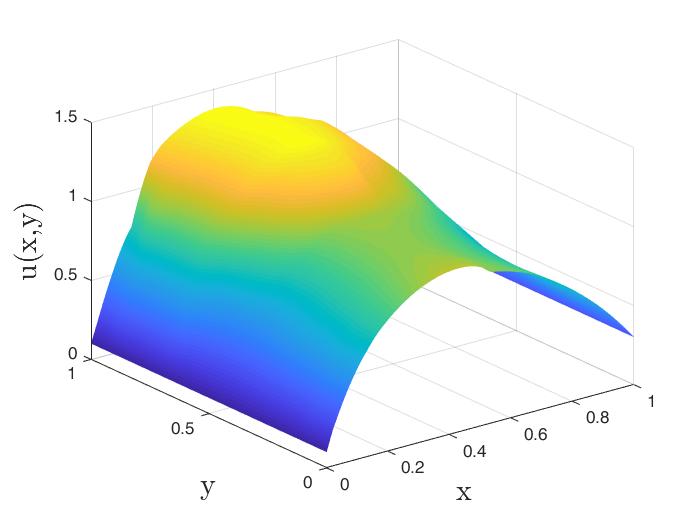}}
	\subfigure{\includegraphics[scale=0.14]{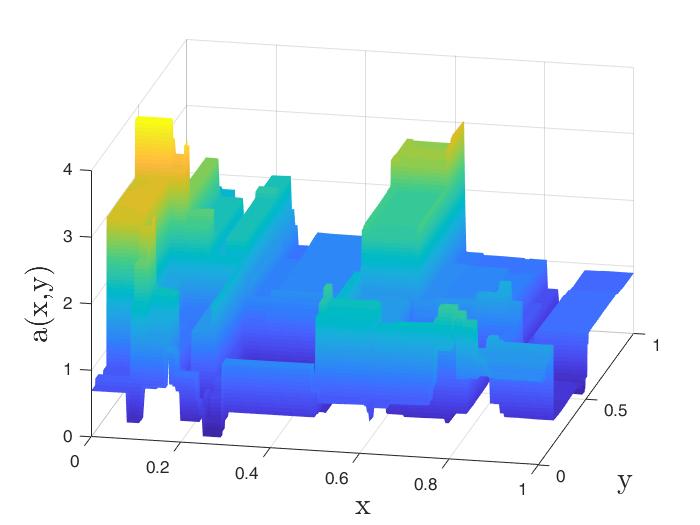}}
	\subfigure{\includegraphics[scale=0.14]{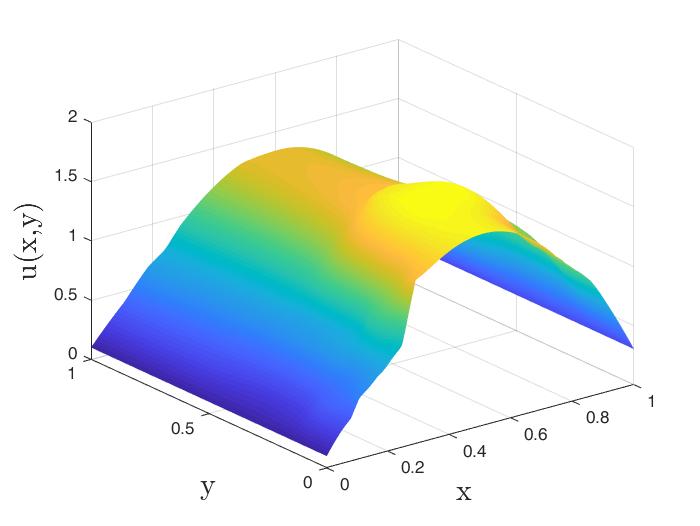}}
    \caption{Different samples of the diffusion coefficient with Gamma($4,10$)-subordinators and the corresponding PDE solutions with mixed Dirichlet-Neumann boundary conditions where the correlation length of $W_2$ is $r_2=0.05$.}\label{fig:gammasolution}
    \end{figure}
We estimate the strong error taking $M=200$ samples where we use the non-adapted FEM solution $u_{9,\varepsilon_W,\varepsilon_l}$ on level $L=9$  as reference solution and choose the FEM discretization steps to be $h_\ell = 0.1\cdot 1.5^{-(\ell -1)}$ for  $\ell = 1,\dots,9$. Figure~\ref{fig:gammaconvergence} shows the convergence on the levels $\ell=1,\dots,6$.

	\begin{figure}[ht]
	\centering
	\subfigure{\includegraphics[scale=0.5]{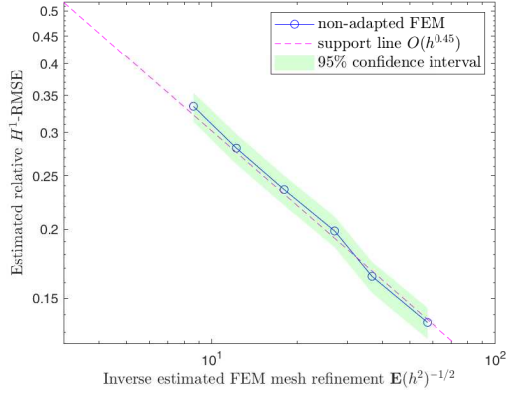}}
    \caption{Convergence results for Gamma($4,10$)-subordinators with mixed Dirichlet-Neumann boundary conditions where the correlation length of $W_2$ is $r_2=0.05$.}\label{fig:gammaconvergence}
    \end{figure}
We observe a convergence rate of approximately $0.45$ which is significantly smaller than the rate of approximately $0.8$ we obtained in the example where we used a GRF $W_2$ with correlation length $r_2=1$ (see Figure \ref{FIG:NumExGammaSmoothGRFConv}). This again confirms that, for subordinated GRFs, the convergence rate of the FE method is highly dependent on the correlation length of the underlying GRF $W_2$ and the resulting jump-intensity of the diffusion coefficient.

\appendix\section{Proof of Theorem~\ref{TH:QuantificationOfDiffApprLinftynorm}}\label{appendix:proof}

\begin{customthm}{5.3} 
\end{customthm}
	   
\begin{theorem}[Theorem 5.3]
For any $\delta>0$ and any $n\in(1,+\infty)$ there exists a constant $A=A(\delta,n)>0$ such that 
	   \begin{align*}
	   \mathbb{E}(\underset{\underline{x}\in\mathcal{D}}{\operatorname{ess}\,\sup}\,|a_K(\underline{x})-a_{K,A}(\underline{x})|^{n})^{1/n}<\delta.
	   \end{align*}
\end{theorem}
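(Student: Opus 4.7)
The plan is to first establish the finite-moment claim $\mathbb{E}(\operatorname{ess\,sup}_{\underline{x}\in\mathcal{D}} a_K(\underline{x})^n)^{1/n} < +\infty$, and then deduce the main statement by dominated convergence applied to the pointwise identity $a_K - a_{K,A} = (a_K - A)^+$.

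For the first part, I would start from the decomposition
\begin{align*}
\underset{\underline{x}\in\mathcal{D}}{\operatorname{ess\,sup}}\, a_K(\underline{x}) \leq \overline{a}_+ + \underset{(x,y)\in\mathcal{D}}{\sup}\, \Phi_1(W_1(x,y)) + \underset{(u,v)\in[0,K]^2}{\sup}\, \Phi_2(W_2(u,v)),
\end{align*}
where the last term uses that the subordinator cutoff $\chi_K$ forces the $W_2$-argument to lie in the compact box $[0,K]^2$. By Assumption~\ref{ASS:CutProblemEigenvalues}~\emph{ii}, integrating the bound on $\Phi_1'$ gives $|\Phi_1(x)| \leq |\Phi_1(0)| + (\phi/\psi)\exp(\psi|x|)$, and Lipschitz continuity gives $|\Phi_2(x)| \leq |\Phi_2(0)| + C_{lip}|x|$. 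Hence, by the triangle inequality in $L^n(\Omega)$, it suffices to bound $\mathbb{E}\exp\bigl(n\psi\sup_{\mathcal{D}}|W_1|\bigr)$ and $\mathbb{E}\bigl(\sup_{[0,K]^2}|W_2|\bigr)^n$. Since both $W_1$ and $W_2$ have $\mathbb{P}$-a.s. continuous paths on a compact domain (as already invoked in Section~\ref{sec:Workingassumptions}), the Borell--TIS inequality / Fernique's theorem \cite[Theorem 2.1.1]{RandomFieldsAndGeometry} yields Gaussian tails for both suprema, giving all exponential moments of $\sup|W_1|$ and all polynomial moments of $\sup|W_2|$. Combining these controls yields the required finiteness.

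For the main statement, I note that $a_{K,A} = \chi_A(a_K) = \min(a_K,A)$, so pointwise in $(\omega,\underline{x})$ one has $|a_K - a_{K,A}| = (a_K - A)^+$. Taking essential suprema in $\underline{x}$ gives the pathwise identity
\begin{align*}
\underset{\underline{x}\in\mathcal{D}}{\operatorname{ess\,sup}}\,|a_K(\underline{x}) - a_{K,A}(\underline{x})| = (S - A)^+, \qquad S := \underset{\underline{x}\in\mathcal{D}}{\operatorname{ess\,sup}}\, a_K(\underline{x}).
\end{align*}
By the first part $S \in L^n(\Omega)$, and $(S-A)^+ \to 0$ pointwise as $A \to \infty$ with $0 \leq (S-A)^+ \leq S$. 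The dominated convergence theorem therefore gives $\mathbb{E}\bigl((S-A)^+\bigr)^n \to 0$, and choosing $A = A(\delta, n)$ large enough makes the resulting $L^n$-norm strictly less than $\delta$.

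The only genuine obstacle is the integrability of $\exp(n\psi \sup_{\mathcal{D}}|W_1|)$, since $\Phi_1$ has only exponential (not polynomial) growth; this is exactly where Borell--TIS is essential, and where the compactness of $\mathcal{D}$ and the a.s. continuity of $W_1$ from Assumption~\ref{ASS:CutProblemEigenvalues}~\emph{i} (via Kolmogorov--Chentsov and Sobolev embedding, as recalled in Section~\ref{sec:Workingassumptions}) enter crucially. The rest of the argument is essentially a monotone/dominated convergence statement and poses no difficulty.
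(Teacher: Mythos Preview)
Your argument is correct, and in fact more economical than the paper's. Both proofs establish the finite-moment bound $S:=\operatorname{ess\,sup}_{\mathcal{D}} a_K\in L^n(\Omega)$ in the same way, via the decomposition into $\overline{a}_+$, $\sup_{\mathcal{D}}\Phi_1(W_1)$, $\sup_{[0,K]^2}\Phi_2(W_2)$ and the Borell--TIS tail estimate \cite[Theorem~2.1.1]{RandomFieldsAndGeometry}; the paper carries this out explicitly through the layer-cake formula $\mathbb{E}(X)=\int_0^\infty\mathbb{P}(X\geq c)\,dc$, whereas you simply invoke the exponential/polynomial moment consequences of Gaussian tails.

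The genuine difference is in the second step. You observe that $\operatorname{ess\,sup}_{\mathcal{D}}|a_K-a_{K,A}|=(S-A)^+$ and conclude by dominated convergence. The paper instead bounds $\mathbb{E}(\|a_K\|_{L^\infty}^n\mathds{1}_{\{\|a_K\|_{L^\infty}\geq A\}})$ via H\"older, splitting off $\mathbb{P}(\|a_K\|_{L^\infty}\geq A)^{1/2}$, and then controls this probability by the tail estimates of Step~1 together with the \emph{independence} of $W_1$ and $W_2$ (Assumption~\ref{ASS:GRFsIndependent}). What this buys the paper is an explicit, quantitative choice of $A=A(\delta,n)$ in terms of $\mu_1,\mu_2,\sigma_{\mathcal{D}}^2,\sigma_K^2$; what your route buys is a shorter argument that does not require Assumption~\ref{ASS:GRFsIndependent} at all. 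For the theorem as stated (mere existence of $A$), your approach is sufficient and arguably cleaner.
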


\begin{proof}\textit{Step 1: Tail estimation for the coefficient}
	   
By Assumption~\ref{ASS:CutProblemEigenvalues} \textit{ii}, the functions $\Phi_1,\Phi_2$ from Definition~\ref{DEF:DefCoeff} fulfill	   
	  
	   \begin{align*}
	   |\Phi_1'(x)|\leq \phi \exp(\psi|x|), ~|\Phi_2(x)-\Phi_2(y)|\leq C_{lip}|x-y|,
	   \end{align*}
	   for $x,y\in \mathbb{R}$. By the mean value theorem, for any $x\in\mathbb{R}$ there exists a real number $\xi\in\mathbb{R}$ with $|\xi|\leq |x|$ such that it holds
	   \begin{align}\label{EQ:EstimateForPhi1}
	   |\Phi_1(x)|&\leq C(1+|x|\Phi_1'(\xi))\leq C(1+|x|\phi \exp(\psi|\xi|)) \leq C(1+|x|\phi \exp(\psi|x|)) \leq \tilde{\phi}\exp(\tilde{\psi}|x|),
	   \end{align}
	   for positive constants $\tilde{\phi}$ and $\tilde{\psi}$ which are independent of $x\in\mathbb{R}$.
	   
	   Since the GRFs $W_1$ and $W_2$ are $\mathbb{P}-a.s.$ bounded on $\mathcal{D}$ resp. on $[0,K]^2$ it follows from \cite[Theorem 2.1.1]{RandomFieldsAndGeometry} that $\mu_1:=\mathbb{E}(\underset{(x,y)\in\mathcal{D}}{\sup}\,W_1(x,y))<+\infty$ and
	   \begin{align}\label{EQ:TailEstGRF1}
	   \mathbb{P}(\|W_1\|_{L^\infty	(\mathcal{D})}>m) \leq 2\,\mathbb{P}(\underset{(x,y)\in\mathcal{D}}{\sup}\,W_1(x,y)>m) \leq 2\exp\Big(-\frac{(m-\mu_1)^2}{2\sigma_\mathcal{D}^2}\Big),
	   \end{align}
	   for $m>\mu_1$ with a finite constant $\sigma_\mathcal{D}^2$ defined by
	   \begin{align*}
	   \sigma_\mathcal{D}^2 := \underset{(x,y)\in\mathcal{D}}{\sup}\mathbb{E}(W_1(x,y)^2)=\sum_{i=1}^\infty \lambda_i^{(1)} e_i^{(1)}(x,y)^2\leq C_e^2\sum_{i=1}^\infty \lambda_i^{(1)} <+\infty,
	   \end{align*}
	   by Assumption~\ref{ASS:CutProblemEigenvalues} \textit{i}.
	   For a given $\varepsilon \in(0,1)$ we choose the real number $A$ such that it holds
	   \begin{align}\label{EQ:ConditionOnCut1}
	   A> 3\,\tilde{\phi}\,\exp\Big(\tilde{\psi}(\sqrt{2\sigma_\mathcal{D}^2 |\ln(\varepsilon/2)|} + \mu_1)\Big).
	   \end{align}
	   With this choice we obtain the bound
	   \begin{align}\label{EQ:TailEstCoeffPart1}
	   \mathbb{P}(\underset{(x,y)\in\mathcal{D}}{\sup}\, \Phi_1(W_1(x,y))>A/3)\leq \varepsilon.
	   \end{align}
	   This can be seen by the following calculation
	   \begin{align*}
	   \mathbb{P}(\underset{(x,y)\in\mathcal{D}}{\sup}\, \Phi_1(W_1(x,y))>A/3)&\leq \mathbb{P}(\underset{(x,y)\in\mathcal{D}}{\sup}\,\tilde{\phi}\,\exp(\tilde{\psi}|W_1(x,y)|)>A/3)\\
	   &=\mathbb{P}(\|W_1\|_{L^\infty(\mathcal{D})} > 1/\tilde{\psi}\, \ln(A/(3\tilde{\phi})))\\
	   &\leq 2\,\exp\Big(-\frac{(1/\tilde{\psi}\,\ln(A/(3\tilde{\phi}))-\mu_1)^2}{2\sigma_\mathcal{D}^2}\Big)\\
	   &\leq \varepsilon,
	   \end{align*}
	   where we used \eqref{EQ:EstimateForPhi1} in the first step, the estimate~\eqref{EQ:TailEstGRF1} in the third step and condition~\eqref{EQ:ConditionOnCut1} in the last step.
	   
	   Obviously, an estimation as in Equation~\eqref{EQ:TailEstGRF1} holds for the GRF $W_2$:
	   \begin{align}\label{EQ:TailEstGRF2}
	   	   \mathbb{P}(\|W_2\|_{L^\infty	([0,K]^2)}>m) \leq 2\,\mathbb{P}(\underset{(x,y)\in[0,K]^2}{\sup}\,W_2(x,y)>m) \leq 2\exp\Big(-\frac{(m-\mu_2)^2}{2\sigma_K^2}\Big)
	   \end{align}
	   for $m>\mu_2$ with $\mu_2:=\mathbb{E}(\underset{(x,y)\in[0,K]^2}{\sup}\,W_2(x,y))<+\infty$ and 
	   	   \begin{align*}
	   \sigma_K^2 := \underset{(x,y)\in[0,K]^2}{\sup}\mathbb{E}(W_2(x,y)^2)=\sum_{i=1}^\infty \lambda_i^{(2)} e_i^{(2)}(x,y)^2\leq C_e^2 \sum_{i=1}^\infty \lambda_i^{(2)} <+\infty.
	   \end{align*}
	   By the Lipschitz continuity of $\Phi_2$ we conclude the existence of a constant $\phi_2>0$ such that 
	   \begin{align}\label{EQ:EstimatePhi2}
	   |\Phi_2(x)|\leq \phi_2(1+|x|),
	   \end{align}
	   for $x\in\mathbb{R}$. If we again fix a positive $\varepsilon\in(0,1)$ and choose the real number $A$ such that
	   \begin{align*}
	   A>3\phi_2\Big(\sqrt{2\sigma_K^2|\ln(\varepsilon/2)|} + \mu_2 + 1),
	   \end{align*}
	   we obtain the following bound
	   \begin{align}\label{EQ:TailEstCoeffPart2}
	   \mathbb{P}(\underset{(x,y)\in[0,K]^2}{\sup}\, \Phi_2(W_2(x,y))>A/3)\leq \varepsilon.
	   \end{align}
	   This can be seen by the following calculation:
	   \begin{align*}
	   \mathbb{P}(\underset{(x,y)\in[0,K]^2}{\sup}\, \Phi_2(W_2(x,y))>A/3)&\leq\mathbb{P}(\phi_2(1+\|W_2\|_{L^\infty([0,K]^2)})>A/3)\\
	   &\leq   \mathbb{P}(\|W_2\|_{L^\infty([0,K]^2)}>A/(3\phi_2) - 1)\\
	   &\leq 2\,\exp\Big( -\frac{(A/(3\phi_2) - 1 - \mu_2)^2}{2\sigma_K^2} \Big)\\
	   &\leq \varepsilon.
	   \end{align*}
	   
\textit{Step 2: Finite moments of the coefficient} 

In this step we want to show that for any $n\in[1,+\infty)$ it holds
\begin{align}\label{EQ:FiniteMomentsOfCoeffInLinftyNorm}
\mathbb{E}(\underset{\underline{x}\in\mathcal{D}}{\operatorname{ess}\,\sup}\,|a_K(\underline{x})|^{n})=:C_{a_K}(n,K,D)<+\infty.
\end{align}
We use the definition of the coefficient $a_K$ in \eqref{EQ:DiffCoeffDefiCut} and H\"older's inequality to calculate
\begin{align*}
\mathbb{E}(\underset{\underline{x}\in\mathcal{D}}{\operatorname{ess}\,\sup}\,|a_K(\underline{x})|^{n})&\leq \mathbb{E}(| \overline{a}_+ + \underset{(x,y)\in\mathcal{D}}{\sup}\,\Phi_1(W_1(x,y)) + \underset{(x,y)\in[0,K]^2}{\sup}\,\Phi_2(W_2(x,y))|^n)\\
&\leq 3^{\frac{n-1}{n}} \Big( \overline{a}_+^n + \mathbb{E}(\underset{(x,y)\in\mathcal{D}}{\sup}\,\Phi_1(W_1(x,y))^n) + \mathbb{E} (\underset{(x,y)\in[0,K]^2}{\sup}\,\Phi_2(W_2(x,y))^n) \Big)\\
&=3^{\frac{n-1}{n}}(\overline{a}_+^n + I_1 + I_2).
\end{align*}
Therefore, it remains to show that it holds $I_1,I_2<+\infty$.

By Fubini's theorem, for every nonnegative random variable $X$ it holds
\begin{align*}
\mathbb{E}(X)=\int_\Omega X \,d\mathbb{P}=\int_\Omega \int_0^\infty \mathds{1}_{\{X\geq c\}}dc\,d\mathbb{P}=\int_0^\infty \mathbb{P}(X\geq c)dc
\end{align*}
if the right hand side exists. We use this fact and Equation~\eqref{EQ:EstimateForPhi1} to estimate for $I_1$:
\begin{align*}
I_1&\leq \tilde{\phi}^n\mathbb{E}(\exp(\tilde{\psi}n\|W_1\|_{L^\infty(\mathcal{D})})\\
&=\tilde{\phi}^n\int_0^\infty \mathbb{P}(\exp(\tilde{\psi}n\|W_1\|_{L^\infty(\mathcal{D})})>c)dc\\
&=\tilde{\phi}^n\int_0^\infty \mathbb{P}(\|W_1\|_{L^\infty(\mathcal{D})}>\ln(c)/(\tilde{\psi}n))dc\\
&=\tilde{\phi}^n \tilde{\psi}n\int_{-\infty}^{+\infty} \exp(\tilde{\psi}nc) \mathbb{P}(\|W_1\|_{L^\infty(\mathcal{D})}>c)dc\\
&\leq \tilde{\phi}^n \tilde{\psi}n\Big(\frac{1}{\tilde{\psi}n} + \mu_1\exp(\tilde{\psi}n\mu_1) + \int_{\mu_1}^\infty 2 \exp(\tilde{\psi}nc - \frac{(c-\mu_1)^2}{2\sigma_\mathcal{D}^2})dc\Big)<+\infty,
\end{align*}
where we split the integral and used Equation~\eqref{EQ:TailEstGRF1} in the last step. In a similar way, we use Equation~\eqref{EQ:EstimatePhi2} to calculate for the second summand $I_2$:
\begin{align*}
I_2&\leq \phi_2^n\mathbb{E}((1+\|W_2\|_{L^\infty([0,K]^2)})^n)\\
&=\phi_2^n\int_0^\infty\mathbb{P}((1+\|W_2\|_{L^\infty([0,K]^2)})^n>c)dc\\
&=\phi_2^n\int_0^\infty \mathbb{P}(\|W_2\|_{L^\infty([0,K]^2)}>c^\frac{1}{n}-1)dc\\
&=\phi_2^n n \int_{-1}^\infty (c+1)^{n-1} \mathbb{P}(\|W_2\|_{L^\infty([0,K]^2)}>c)dc\\
&\leq \phi_2^nn\Big( (\mu_2+1)^n + 2\int_{\mu_2}^\infty(c+1)^{n-1} \exp(-\frac{(c-\mu_2)^2}{2\sigma_K^2}dc \Big)<+\infty,
\end{align*}
where we used Equation~\eqref{EQ:TailEstGRF2} in the last step. This proves Equation~\eqref{EQ:FiniteMomentsOfCoeffInLinftyNorm}.

\textit{Step 3: Estimate for the approximation of the diffusion coefficient.}
	   
Now, let $\delta\in(0,1)$ be arbitrary. 	   
Choose $A=A(\delta)>0$ such that 
	   \begin{align*}
	   A>\max\Big\{ 3\overline{a}_+, 3\,\tilde{\phi}\,\exp\Big(\tilde{\psi}(\sqrt{2\sigma_\mathcal{D}^2 |\ln(\varepsilon/2)|} + \mu_1)\Big), 3\phi_2\Big(\sqrt{2\sigma_K^2|\ln(\varepsilon/2)|} + \mu_2 + 1\Big) \Big\}
	   \end{align*}
	   for $\varepsilon:=1-\sqrt{1-(\delta^{2s} /C_{a_K}(2s,K,D))}$.
	   
	   We estimate using H\"older's inequality:
	   \begin{align*}
	   \mathbb{E}(\underset{\underline{x}\in\mathcal{D}}{\operatorname{ess}\,\sup}\,|a_K(\underline{x})-a_{K,A}(\underline{x})|^{s})&\leq \mathbb{E}(\|a_K\|_{L^\infty(\mathcal{D})}^s\mathds{1}_{\{\|a_K\|_{L^\infty(\mathcal{D}))}\geq A\}})\\
	   &\leq \mathbb{E}(\|a_K\|_{L^\infty(\mathcal{D})}^{2s})^\frac{1}{2}\mathbb{P}(\underset{\underline{x}\in\mathcal{D}}{\operatorname{ess}\,\sup}\,|a_K(\underline{x})|\geq A)^\frac{1}{2}\\
	   &=C_{a_K}(2s,K,D)^\frac{1}{2}\mathbb{P}(\underset{\underline{x}\in\mathcal{D}}{\operatorname{ess}\,\sup}\,|a_K(\underline{x})|\geq A)^\frac{1}{2}
	   \end{align*}
	   For the second factor we estimate using the independence of $W_1$ and $W_2$
	   \begin{align*}
	   \mathbb{P}(\underset{\underline{x}\in\mathcal{D}}{\operatorname{ess}\,\sup}\,|a_K(\underline{x})|\geq A)&=1-\mathbb{P}(\underset{\underline{x}\in\mathcal{D}}{\operatorname{ess}\,\sup}\,|a_K(\underline{x})|\leq A)\\
	   &\leq 1-\mathbb{P}(\|\Phi_1(W_1)\|_{L^\infty(\mathcal{D})}\leq A/3)\cdot\mathbb{P}(\|\Phi_2(W_2)\|_{L^\infty([0,K]^2)}\leq A/3)\\
	   &\leq 1-(1-\mathbb{P}(\|\Phi_1(W_1)\|_{L^\infty(\mathcal{D})}\geq A/3))\cdot(1-\mathbb{P}(\|\Phi_2(W_2)\|_{L^\infty([0,K]^2)}\geq A/3))\\
	   &\leq 1-(1-\varepsilon)^2\\
	   &\leq  \delta^{2s} /C_{a_K}(2s,K,D),
	   \end{align*}
	   where we used Equations~\eqref{EQ:TailEstCoeffPart1} and~\eqref{EQ:TailEstCoeffPart2} in the fourth step and therefore we obtain
	   \begin{align*}
	   \mathbb{E}(\underset{\underline{x}\in\mathcal{D}}{\operatorname{ess}\,\sup}\,|a_K(\underline{x})-a_{K,A}(\underline{x})|^{s})^{1/s}\leq \delta.
	   \end{align*}
	   \end{proof}

	\section*{Acknowledgments}
	Funded by Deutsche Forschungsgemeinschaft (DFG, German Research Foundation) under Germany's Excellence Strategy - EXC 2075 -     390740016.

	\bibliographystyle{siam}
	\bibliography{references}
	
\end{document}